\documentclass[12pt,a4paper]{article}

\usepackage[utf8]{inputenc}
\usepackage[T1]{fontenc}
\usepackage{lmodern}

\usepackage{amsmath,amssymb,amsthm,amsfonts}
\usepackage{mathrsfs}
\usepackage{mathtools}
\usepackage{fullpage}
\usepackage{hyperref}
\usepackage{enumitem}

\hypersetup{
    colorlinks=true,
    linkcolor=blue,
    citecolor=blue,
    urlcolor=blue
}

\newtheorem{theorem}{Theorem}[section]
\newtheorem{lemma}[theorem]{Lemma}
\newtheorem{proposition}[theorem]{Proposition}
\newtheorem{corollary}[theorem]{Corollary}
\newtheorem{definition}[theorem]{Definition}
\newtheorem{remark}[theorem]{Remark}

\newtheorem{conjecture}[theorem]{Conjecture}

\DeclareMathOperator{\supp}{supp}
\DeclareMathOperator{\dist}{dist}
\DeclareMathOperator{\diam}{diam}
\DeclareMathOperator{\Int}{Int}
\DeclareMathOperator{\conv}{conv}

\title{
Sharp existence conditions and geometric inheritance\\
for overdetermined free boundary problems\\
of Laplacian and bi-Laplacian type
}

\author{
M. Barkatou\thanks{
University Choua\"ib Doukkali, Morocco,
\texttt{barkatoum@gmail.com}}
\and
S. Khatmi\thanks{
University Choua\"ib Doukkali, Morocco,
\texttt{khatmis@gmail.com}}
}

\date{February 19, 2026}

\begin{document}

\maketitle

\begin{abstract}

This paper provides necessary and sufficient conditions for the existence of free boundaries in overdetermined problems for the Laplacian, and sufficient conditions for the bi-Laplacian, when the overdetermined boundary condition is non-constant.
Using classical integral inequalities (Cauchy-Schwarz, H\"older, Hardy, eigenvalue bounds, Pohozaev and Reilly identities), we derive existence results for a broad class of free boundary problems arising in potential theory, plate theory, electromagnetism, and shape optimization.

A regularity result for minimizers in the $C$-GNP class is established using the thickness function and the Wiener criterion, based on the geometric description of cusp points given in \cite{Barkatou2002}.

We provide a new, self-contained geometric result: for almost every $t$, the level sets of the solution inherit the $C$-GNP property.
This inheritance theorem justifies the variational framework and guarantees that the entire foliation generated by the state function remains within the admissible class.

New results include refined estimates via interpolation inequalities, stability under perturbations, and connections with isoperimetric inequalities.
The physical interpretation of the bi-Laplacian problem $\mathcal{B}(f,g)$ in the Kirchhoff-Love theory of thin plates is emphasized.

\medskip

\noindent
\textbf{Keywords:}
Free boundary problems;
Quadrature surfaces;
Shape optimization;
Overdetermined problems;
Bi-Laplacian;
Thin plates;
Geometric inheritance;
Variational methods.

\medskip

\noindent
\textbf{2020 Mathematics Subject Classification:}
35R35, 35J25, 35J40, 35G30, 49Q10, 49J45, 74K20.

\end{abstract}

\tableofcontents

\section{Introduction}

We assume throughout that $D\subset\mathbb{R}^N$ (with $N=2,3$ for physical relevance, though many results extend to arbitrary $N$) is a fixed bounded domain containing all admissible domains.
For an open subset $\Omega\subset D$, let $\nu$ denote the outward unit normal to $\partial\Omega$, $|\partial\Omega|$ the perimeter (or $(N-1)$-dimensional Hausdorff measure), and $|\Omega|$ the volume ($N$-dimensional Lebesgue measure).
Let $g$ and $f$ be positive functions in $L^2(\mathbb{R}^N)$ such that $f$ has compact support with nonempty interior, and denote by $C$ the convex hull of $\supp f$.

The shape of a perfect conductor in electrostatic equilibrium, an ideal fluid bubble, or an optimally designed elastic plate is often governed by a delicate balance between a volumetric source distribution and a prescribed boundary flux.
Mathematically, this balance is encoded in overdetermined free boundary problems: one seeks a domain $\Omega$ whose boundary is partly free, such that the solution of a partial differential equation inside $\Omega$ satisfies simultaneously a Dirichlet condition and a Neumann-type overdetermined condition on the free boundary.
When the prescribed boundary data is non-constant, the existence and regularity of such domains become subtle questions that lie at the intersection of geometric analysis, calculus of variations, and mathematical physics.

In this paper, we study two fundamental overdetermined free boundary problems.

\subsection{Problem $\mathcal{QS}(f,g)$}

Find $\Omega\subset D$ strictly containing $C$ such that
\begin{equation}\label{eq:QS}
\left\{
\begin{array}{ll}
-\Delta u_\Omega = f & \text{in } \Omega,\\
u_\Omega = 0 & \text{on } \partial\Omega,\\
|\nabla u_\Omega| = g & \text{on } \partial\Omega.
\end{array}
\right.
\end{equation}

\subsection{Problem $\mathcal{B}(f,g)$}

Find $\Omega\subset D$ strictly containing $C$ such that
\begin{equation}\label{eq:B}
\left\{
\begin{array}{ll}
\Delta^2 v_\Omega = f & \text{in } \Omega,\\
v_\Omega = \Delta v_\Omega = 0 & \text{on } \partial\Omega,\\
|\nabla v_\Omega|\,|\nabla(\Delta v_\Omega)| = g & \text{on } \partial\Omega.
\end{array}
\right.
\end{equation}

\subsection{Physical interpretation of the existence conditions}

\subsubsection{Laplacian problem $\mathcal{QS}(f,g)$: A critical charge criterion}

Problem $\mathcal{QS}(f,g)$, known as the quadrature surfaces free boundary problem, arises in potential theory, fluid dynamics, and electrostatics: it describes the shape of a conductor or a fluid domain such that the boundary flux matches a prescribed distribution $g$.
The condition $|\nabla u_\Omega| = g$ on $\partial\Omega$ corresponds to a prescribed electric field magnitude in electrostatics or a prescribed velocity gradient in ideal fluid flow.

Our main result for this problem, the necessary and sufficient condition
\begin{equation}\label{eq:critical_charge}
\int_C f\,dx > \int_{\partial C} g\,d\sigma,
\end{equation}
acts as a critical charge criterion.
It determines a priori, without any free boundary computation, whether a source distribution $f$ within the core $C$ is intense enough to sustain a prescribed flux profile $g$ on an outer boundary.
Physically, $\int_C f$ represents the total source strength—total electric charge in electrostatics, or total output of a source distribution in potential theory—while $\int_{\partial C} g$ is the total flux that would exit through the core's boundary if the domain's boundary were exactly $\partial C$.
The inequality asserts that the total source inside $C$ must strictly exceed the flux that can be accommodated by $\partial C$.
This surplus forces the domain to expand beyond $C$ ($\Omega\supset C$) so that the flux can be distributed over a larger boundary $\partial\Omega$ where it matches the prescribed value $g$.

\subsubsection{Bi-Laplacian problem $\mathcal{B}(f,g)$: An energetic feasibility criterion}

Problem $\mathcal{B}(f,g)$ models the optimal design of a simply supported thin plate under Kirchhoff-Love theory \cite{Gazzola2010}: $v_\Omega$ is the vertical deflection, $\Delta v_\Omega$ the bending moment, $|\nabla v_\Omega|$ the slope, and $|\nabla(\Delta v_\Omega)|$ the effective shear force at the boundary.
The overdetermined condition prescribes the product of slope and shear force, which corresponds to a pointwise control of the mechanical work density along the edge.
This condition arises in shape optimization of plates subjected to combined bending and shear constraints.

Our result for this problem, the sufficient condition
\begin{equation}\label{eq:energetic}
\Bigl(\int_{\partial C}\sqrt{g}\,d\sigma\Bigr)^2 < \Bigl(\int_C f\,dx\Bigr)\Bigl(\int_C u_C\,dx\Bigr),
\end{equation}
where $u_C$ solves $-\Delta u_C = f$ in $C$ with $u_C = 0$ on $\partial C$, represents a novel energetic feasibility criterion.
Here $\int_C f$ is the total transverse load (resultant force), $u_C = -\Delta v_C$ is the bending moment, and $\int_C u_C$ is the total integrated bending moment, proportional to the elastic strain energy stored in the portion $C$ of the plate.
The term $\int_{\partial C}\sqrt{g}$ is a geometric mean of the prescribed mechanical work along the edge.
The inequality states that the product of the cause (total load) and the effect (integrated deformation)—a global measure of the plate's internal energy—must exceed the square of the averaged mechanical work prescribed on $\partial C$.
If the prescribed edge work is too large relative to the energy the core $C$ can store, the plate must extend ($\Omega\supset C$) to dilute this work over a larger boundary until the condition $|\nabla v|\,|\nabla u| = g$ can be satisfied pointwise.
This criterion transcends a simple force balance to capture the system's intrinsic bending stiffness.

Together, these results transform a complex free boundary problem into a simple verification of initial data, offering physicists and engineers a direct means to test the viability of an equilibrium configuration or an optimal design.

\subsection{Prior literature}

When $g\equiv k$ (constant), existence conditions are known: Gustafsson and Shahgholian \cite{Gustafsson1996} gave a sufficient condition via subsolutions; Barkatou \cite{Barkatou2010} established the necessary and sufficient condition $\int_C f > k|\partial C|$.
For $\mathcal{B}(f,g)$ with constant $g$, Fromm and McDonald \cite{Fromm1997} and Huang and Miller \cite{Huang2001} proved symmetry results; Barkatou \cite{Barkatou2020} derived existence under integral conditions.

\subsection{Objectives and new contributions}

Our aim is threefold:

\begin{enumerate}
\item Extend existing results to non-constant $g$ using variational methods and integral inequalities.
\item Strengthen the existence theory by proving regularity of minimizers in the admissible class $\mathcal{O}_C$ via the thickness function and the Wiener criterion, based on the fine geometric description of cusp points obtained in \cite{Barkatou2002}.
\item Provide a new foundational geometric result: the inheritance of the $C$-GNP property by level sets of the state function (Theorem \ref{thm:inheritance}).
\end{enumerate}

The main novelties of this work include:

\begin{itemize}
\item A unified variational framework for both Laplacian and bi-Laplacian problems with nonconstant boundary data (Section \ref{sec:QS}).
\item Sharp necessary and sufficient conditions via integral inequalities (Theorems \ref{thm:necessary_QS} and \ref{thm:main_existence}).
\item Regularity theory for minimizers in the $C$-GNP class, including boundary behavior near contact points.
\item The inheritance theorem for the $C$-GNP class (Theorem \ref{thm:inheritance}), proved using radial monotonicity, Sard-type arguments, and the thickness function.
\item New results using interpolation inequalities (Gagliardo-Nirenberg, Ladyzhenskaya) to handle critical exponents (Section \ref{sec:inequalities}).
\item Stability analysis and symmetry breaking criteria (Section \ref{sec:stability}).
\item Analysis of the necessary and sufficient conditions for the existence of solutions in the radial case (Section \ref{sec:radial}).
\end{itemize}

\paragraph{Note on the relationship with previous work.}
This work builds upon the foundational results of Gustafsson and Shahgholian \cite{Gustafsson1996} concerning the existence, regularity, and geometry of minimizers for functionals of the type $J_{f,g}(u)=\int(|\nabla u|^2-2fu+g^2\chi_{\{u>0\}})\,dx$.
Our contribution is the extension to non-constant boundary data $g$ and the derivation of necessary and sufficient conditions for $\mathcal{QS}(f,g)$, as well as the inheritance theorem.
For the bi-Laplacian, we restrict to the radial case; the general non-radial case remains open.

\paragraph{Open problem.}
The general bi-Laplacian problem $\mathcal{B}(f,g)$ with non-radial data and non-symmetric boundary remains open, requiring further development of regularity theory for biharmonic operators within the $\mathcal{O}_C$ framework.

\subsection{Organization of the paper}

Section \ref{sec:OC} introduces the class $\mathcal{O}_C$ and its properties.
Section \ref{sec:inheritance} states and proves the inheritance theorem.
Section \ref{sec:QS} treats the quadrature surfaces problem $\mathcal{QS}(f,g)$, giving necessary and sufficient conditions.
Section \ref{sec:bi} formulates the bi-Laplacian problem and presents the radial existence result.
Section \ref{sec:corollaries} gives corollaries and applications.
Section \ref{sec:inequalities} illustrates the use of classical inequalities.
Section \ref{sec:stability} discusses stability and symmetry breaking.
Section \ref{sec:radial} analyzes the radial case for both problems.
The appendices contain complete proofs of the inheritance theorem, shape derivative calculations, radial bi-Laplacian computations, and the uniform angle bound.

\section{Preliminaries and the $C$-GNP class}
\label{sec:OC}

\subsection{The $C$-geometric normal property}

We begin by recalling the precise definition of the class $\mathcal{O}_C$.
While the original definition in \cite{Barkatou2002} was given for compact convex sets, the variational constructions in this paper require the core $C$ to be strictly convex to ensure the normal exponential map is a global diffeomorphism.
The following definition makes this precise.

\begin{definition}[Strictly convex core]\label{def:core}
Let $C\subset\mathbb{R}^N$ be a compact set.
We say that $C$ is a \emph{strictly convex core} if:
\begin{enumerate}
\item $C$ is strictly convex;
\item $C$ has non-empty interior;
\item The boundary $\partial C$ is a smooth hypersurface of class $C^2$.
\end{enumerate}
The strict convexity ensures that the normal exponential map $\partial C\times(0,\infty)\to\mathbb{R}^N\setminus C$, $(c,r)\mapsto c+r\nu(c)$, is a global diffeomorphism onto its image.
\end{definition}

\begin{remark}[On the geometric assumptions on $C$]\label{rem:geometric_assumptions}
Throughout this paper, we distinguish two levels of geometric assumptions on the core $C$:
\begin{itemize}
\item For the compactness and continuity results (Propositions \ref{prop:compact} and \ref{prop:continuity}), the geometric containment of cusps (Proposition \ref{prop:cusp_containment}), and the H\"older regularity of the thickness function (Corollary \ref{cor:holder_thickness}), it suffices that $C$ is a compact convex set with nonempty interior, as in the original work \cite{Barkatou2002}.
\item The strict convexity and $C^2$ regularity of $\partial C$ (Definition \ref{def:core}) are required only for the radial monotonicity theorem (Theorem \ref{thm:radial_monotonicity}), the uniform angle bound (Lemma \ref{lem:angle}), and the inheritance theorem (Theorem \ref{thm:inheritance}), where the global diffeomorphism property of the normal exponential map is essential.
\end{itemize}
This distinction ensures that our existence results (Theorems \ref{thm:main_existence} and \ref{thm:radial_bi}) apply to a broad class of cores, while the geometric fine analysis is carried out under the natural strict convexity hypothesis.
\end{remark}

\begin{definition}[Class $\mathcal{O}_C$]\label{def:OC}
Let $C\subset\mathbb{R}^N$ be a compact convex set with nonempty interior.
The class $\mathcal{O}_C$ consists of all open sets $\Omega\subset D$ satisfying the following conditions:

\begin{enumerate}[label=(C\arabic*)]
\item $\Int(C)\subset\Omega$;
\item $\partial\Omega\setminus C$ is locally Lipschitz;
\item For every $c\in\partial C$, the outward normal ray $\Delta_c=\{c+r\nu(c):r>0\}$ intersects $\Omega$ in a connected interval;
\item For every $x\in\partial\Omega\setminus C$, the inward normal ray (if it exists) meets $C$.
\end{enumerate}
\end{definition}

\subsection{Thickness function and radial parametrization}

A key consequence of the axioms defining $\mathcal{O}_C$ is the existence of a global parametrization of $\Omega\setminus C$ by the product $\partial C\times(0,1)$.
The following proposition summarizes this fundamental geometric fact.
A complete proof can be found in \cite{Barkatou2002}.

\begin{proposition}[Thickness function and radial parametrization]\label{prop:param}
Let $\Omega\in\mathcal{O}_C$.
There exists a unique Lipschitz function $d:\partial C\to(0,\infty)$, called the \emph{thickness function}, such that
\[
\Omega\setminus C = \{c+r\nu(c):c\in\partial C,\;0<r<d(c)\}.
\]
Moreover, the mapping
\[
\Phi:\partial C\times(0,1)\longrightarrow\Omega\setminus C,\qquad (c,t)\longmapsto c+td(c)\nu(c)
\]
is a bi-Lipschitz homeomorphism.
The radial map $\Phi_d:\partial C\to\partial\Omega$ defined by $\Phi_d(c)=c+d(c)\nu(c)$ provides a parametrization of the outer boundary $\partial\Omega$ by the core boundary $\partial C$.
\end{proposition}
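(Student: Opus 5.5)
The plan is to build $d$ directly from (C1) and (C3), prove it is Lipschitz using (C2) and (C4), and then read off the bi-Lipschitz property of $\Phi$ from the normal coordinates outside $C$.

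\emph{Step 1: definition of $d$ and the description of $\Omega\setminus C$.} For $c\in\partial C$ with a well-defined outward normal $\nu(c)$, set
\[
d(c)=\sup\{r>0:\ c+s\nu(c)\in\Omega \text{ for all } s\in(0,r)\}.
\]
By (C1) and openness of $\Omega$, a small neighbourhood of $c$ lies in $\Omega$ up to $\partial C$. In fact, since $\Int(C)\subset\Omega$ and $\Omega$ strictly contains $C$ in the intended setting, we have $d(c)>0$. Boundedness of $D$ gives $d(c)<\infty$. By (C3), $\Delta_c\cap\Omega$ is a single interval starting at $c$, so $\Delta_c\cap\Omega=\{c+r\nu(c):0<r<d(c)\}$. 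Next, every $x\in\mathbb{R}^N\setminus C$ lies on exactly one ray $\Delta_c$, namely with $c$ the metric projection of $x$ onto $C$. This follows from convexity, and for a general convex $C$ one uses the normal cone in place of a single normal. Together these give the set identity, and uniqueness of $d$ is immediate from it.

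\emph{Step 2: $d$ is Lipschitz.} This is the main obstacle. First I show $\Phi_d(c)=c+d(c)\nu(c)$ lies in $\partial\Omega\setminus C$. Then I use (C4) to control the angle between $\partial\Omega$ and the radial direction. If $\partial\Omega$ became tangent to a ray $\Delta_c$ at $\Phi_d(c)$, then nearby boundary points would have inward normals missing $C$, contradicting (C4). Combined with (C2), this yields a local cone condition: near $\Phi_d(c)$, $\partial\Omega$ is a Lipschitz graph over $\partial C$ in the normal coordinates $(c,r)$. Compactness of $\partial C$ then turns the local graph constants into a uniform Lipschitz bound for $d$. Continuity of $d$ also needs an argument, namely lower and upper semicontinuity from openness of $\Omega$ and from (C3). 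I would try the angle argument first via the projection map $\pi_C$, which is $1$-Lipschitz and smooth off $C$ when $\partial C$ is $C^2$. Where $\nu$ is multivalued at corners, the argument must be carried out on normal-cone fibres; here I would defer to \cite{Barkatou2002}.

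\emph{Step 3: bi-Lipschitz property of $\Phi$.} The map $(c,r)\mapsto c+r\nu(c)$ is bi-Lipschitz on $\partial C\times[\delta,R]$ for $0<\delta<R$. Near $r=0$ it degenerates only in the tangential scaling, which is controlled because $|\nu(c)-\nu(c')|$ is bounded by the curvature. Composing with $(c,t)\mapsto(c,t\,d(c))$, which is bi-Lipschitz since $d$ is Lipschitz and bounded below by a positive constant (by continuity and compactness), gives the claim for $\Phi$. Finally, $\Phi_d$ parametrizes $\partial\Omega$ on the outer part by Step 2 and the set identity.
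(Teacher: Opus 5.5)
\textbf{There is a genuine gap at the start of Step 1, and it cannot be repaired.} The paper gives no argument for this proposition; it only refers to Barkatou's 2002 paper. Your outline therefore offers more than the paper does, and for domains with $C\subset\Omega$ it is essentially sound.

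The problem is the positivity of $d$, and above all the uniform lower bound $\inf_{\partial C}d>0$. Your Lipschitz estimate and the bi-Lipschitz property of $(c,t)\mapsto(c,t\,d(c))$ both rest on this bound, and it does not follow from (C1)--(C4).
\begin{itemize}
\item Axiom (C1) only gives $\Int(C)\subset\Omega$, so your inference that a neighbourhood of $c\in\partial C$ lies in $\Omega$ is false.
\item Your fallback, ``$\Omega$ strictly contains $C$ in the intended setting'', is an extra hypothesis.
\item The class $\mathcal{O}_C$ deliberately allows contact points $\partial\Omega\cap\partial C\neq\emptyset$. Proposition \ref{prop:cusp_containment}, Corollary \ref{cor:holder_thickness} and Theorem \ref{thm:regularity}(2) are about exactly these points.
\end{itemize}

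Here is a concrete counterexample. Let $C=\overline{B_1}\subset\mathbb{R}^2$ and, in polar coordinates with $\theta\in(-\pi,\pi]$, set $\Omega=\{\rho<1+\tfrac12|\theta|^{2/3}\}$.
\begin{itemize}
\item (C1)--(C3) are clear.
\item (C4) holds. The inward normal ray at a boundary point passes within distance $\rho|\rho'|/\sqrt{\rho^2+\rho'^2}$ of the origin. This is $<1$ iff $\rho'^2(\rho^2-1)<\rho^2$, and here $\rho'^2(\rho^2-1)=\tfrac19(1+\tfrac14|\theta|^{2/3})<1\le\rho^2$.
\item Yet $d=\tfrac12|\theta|^{2/3}$ vanishes at $\theta=0$ and is not Lipschitz, and $\Phi^{-1}$ is not Lipschitz either.
\end{itemize}
So your argument can only prove the proposition under the added hypothesis $C\subset\Omega$, i.e.\ $\dist(C,\partial\Omega)>0$. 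The statement is false without it, and you should state the hypothesis explicitly rather than slide it in.

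\textbf{Making Step 2 quantitative.} With that hypothesis added, do not argue by tangency at a single point: a Lipschitz boundary need not have a tangent plane there. Instead:
\begin{itemize}
\item At every $x\in\partial\Omega$ where $\mathbf{n}(x)$ exists, (C4) gives $y\in C$ with $-\mathbf{n}(x)=(y-x)/|y-x|$.
\item Write $x=c+r\nu(c)$ with $c=\pi_C(x)$ and $r=\dist(x,C)$. Convexity gives $(y-x)\cdot\nu(c)\le -r$.
\item Hence $\mathbf{n}(x)\cdot\nu(c)\ge \dist(x,C)/\diam D$.
\end{itemize}
This is the uniform transversality that, fed into the Lipschitz charts of (C2), yields the local radial Lipschitz graph. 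It visibly degenerates as $\dist(x,C)\to0$, which is exactly the failure in the example above.

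\textbf{Two smaller corrections.}
\begin{itemize}
\item Upper semicontinuity of $d$ comes from (C4), not (C3). Take $\Omega=B_2\cup\{x:\ |x|<5,\ \angle(x,e_N)<\alpha\}$ with $C=\overline{B_1}$: it satisfies (C1)--(C3), but $d$ is discontinuous.
\item In Step 3 the normal map does not degenerate at $r=0$. Its inverse $x\mapsto(\pi_C(x),\dist(x,C))$ is Lipschitz for every convex $C$. The forward map, however, is Lipschitz only if $\nu$ is, i.e.\ $\partial C\in C^{1,1}$. This is another hypothesis implicit in the statement. Deferring corners to Barkatou's paper cannot help, since at a corner $\nu(c)$ is not single-valued and the set identity itself fails.
\end{itemize}
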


\subsection{Compactness and continuity properties}

\begin{proposition}[Compactness of $\mathcal{O}_C$]\label{prop:compact}
Any sequence $\{\Omega_n\}\subset\mathcal{O}_C$ admits a subsequence converging in the Hausdorff, compact, and characteristic senses to some $\Omega\in\mathcal{O}_C$.
Moreover, these three convergences are equivalent in $\mathcal{O}_C$.
\end{proposition}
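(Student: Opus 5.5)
The plan is to reduce everything to the thickness functions of Proposition \ref{prop:param}. For each $\Omega_n\in\mathcal{O}_C$, let $d_n:\partial C\to(0,\infty)$ be its thickness function, so that $\Omega_n = \Int(C)\cup\{c+r\nu(c): c\in\partial C,\ 0<r<d_n(c)\}$, up to the set $\partial C$ and the normal cones at non-smooth points of $\partial C$. Since $\Omega_n\subset D$ and $D$ is bounded, the family $\{d_n\}$ is uniformly bounded by $\diam D$.

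The first step, and the crucial one, is a uniform Lipschitz bound for the $d_n$ in terms of $C$ and $D$ alone. This bound should come from axiom (C4). At a boundary point $x=c+d_n(c)\nu(c)$ where $\partial\Omega_n$ has a normal, the inward normal ray must meet $C$. Because $x$ lies at distance at most $\diam D$ from $C$, and $C$ contains a ball $B(x_0,\rho)$, this forces the angle between the normal to $\partial\Omega_n$ and the radial direction $\nu(c)$ to be bounded away from $\pi/2$ by a constant depending only on $\rho$ and $\diam D$. That angle control translates into a bound $|\nabla_{\partial C} d_n|\le L(C,D)$ almost everywhere. Points where the normal fails to exist form a null set, since $\partial\Omega_n\setminus C$ is Lipschitz, so the bound holds in the a.e. sense. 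This is where I expect the main obstacle. One must handle the case where $d_n$ touches $0$ (contact with $\partial C$), and the non-smooth points of $\partial C$, where the radial parametrization uses a whole normal cone. If the a.e. gradient argument is delicate, I would first try a cone-condition formulation: (C4) yields a uniform exterior/interior cone at each free boundary point, with axis pointing toward $C$, and this gives the Lipschitz estimate directly.

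With equi-Lipschitz, equibounded $d_n$ in hand, Arzelà--Ascoli on the compact set $\partial C$ gives a subsequence $d_n\to d$ uniformly, with $d$ nonnegative and $L$-Lipschitz. I then define $\Omega$ as the region under the graph of $d$ in the radial parametrization, together with $\Int(C)$. I would check (C1)--(C4) for $\Omega$:
\begin{itemize}
\item (C1) and (C3) are immediate from the construction.
\item (C2) follows because $\partial\Omega\setminus C$ is a Lipschitz radial graph.
\item (C4) passes to the limit, since the uniform cone condition is closed under uniform convergence of the graphs.
\end{itemize}

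Finally I address the convergences and their equivalence. Uniform convergence $d_n\to d$ gives Hausdorff convergence of the complements $D\setminus\Omega_n$ and of the boundaries. For the characteristic functions, I would bound $|\Omega_n\triangle\Omega|$ by the integral over $\partial C$ of $|d_n-d|$ times the bounded Jacobian of $\Phi$, which tends to zero. Compact convergence (every compact $K\subset\Omega$ is eventually in $\Omega_n$, and conversely for compacts of $\overline{\Omega}^c$) also follows from uniform convergence of the graphs.

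For the equivalence, the key point is that each mode of convergence in $\mathcal{O}_C$ forces uniform convergence of the thickness functions. Given any one mode of convergence, the thickness functions are equi-Lipschitz, so every subsequence has a uniformly convergent sub-subsequence. That uniform limit must induce the same limit set, since limits in each topology are unique in the class. For $L^1$ limits, uniqueness uses the fact that a radial Lipschitz graph domain is determined a.e. by its indicator. Hence the whole sequence of thickness functions converges uniformly, and this yields the other two modes.
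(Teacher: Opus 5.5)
\textbf{There is a genuine gap in your first step.} Axiom (C4) does not give a Lipschitz bound on the $d_n$ that is uniform in $n$. Near the contact set $\partial\Omega\cap\partial C$ it does not even give one for a single domain.

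Suppose $x=c+d\,\nu(c)$ and the inward normal ray hits $y=x-s\,\mathbf{n}(x)\in C$. Since $C\subset\{z:(z-c)\cdot\nu(c)\le0\}$, you get $s\,\mathbf{n}(x)\cdot\nu(c)=d+(c-y)\cdot\nu(c)\ge d$ with $s\le d+\diam C$. That is, $\mathbf{n}(x)\cdot\nu(c)\ge d/(d+\diam C)$, and this is all you can get in general: the cone of directions from $x$ toward $C$ opens up to a half-space as $d\to0$. So the angle control deteriorates as $x$ approaches $C$. The upper bound $\diam D$ on the distance is irrelevant; what you would need is a lower bound.

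Here is a concrete counterexample. Let $C$ be the unit disk and write $\partial\Omega$ in polar coordinates as $r=1+d(\theta)$. A direct computation shows that the inward normal ray meets $C$ iff $|d'|\le(1+d)/\sqrt{2d+d^2}$. For small $\epsilon>0$, the functions $d_n(\theta)=\epsilon(\theta^2+1/n)^{1/3}$, $\theta\in[-\pi,\pi]$, satisfy this, so they define a sequence in $\mathcal{O}_C$ (with $D$ a large disk). But $\mathrm{Lip}(d_n)\sim n^{1/6}\to\infty$. The uniform limit $\epsilon|\theta|^{2/3}$ is not Lipschitz at the contact point $\theta=0$, where the exterior of the limit domain forms a cusp with tip on $\partial C$.

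This is exactly the cusp phenomenon behind Proposition \ref{prop:cusp_containment} and Corollary \ref{cor:holder_thickness}: domains of $\mathcal{O}_C$ are genuinely non-Lipschitz at contact points. Your fallback uniform cone condition fails for the same reason. Consequently, your verification of (C2) and (C4) for the limit (``a Lipschitz radial graph'', ``uniform cone conditions are closed under uniform convergence'') has nothing to rest on.

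\textbf{How the strategy can be salvaged, and how the paper proceeds.} The plan can be repaired, but with different ingredients. For $C^2$ boundary, write $\mathbf{n}\propto\nu-(I+dW)^{-1}\nabla d$ with $W$ the Weingarten map. The estimate above then gives two things:
\begin{itemize}
\item Lipschitz bounds on $\{d_n\ge\delta\}$, uniform in $n$ for each $\delta>0$. This is enough for the local Lipschitz property (C2) of the limit away from contact.
\item Globally, $|\nabla d_n|\lesssim 1/d_n$, so $d_n^2$ is equi-Lipschitz and $d_n$ is equi-H\"older with exponent $\tfrac12$. This still suffices for Arzel\`a--Ascoli.
\end{itemize}
Passing (C4) to the limit is then a real step, since normals converge only weakly-$*$. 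You need that, at a fixed base point, the constraint ``$-\mathbf{n}(x)$ lies in the closed convex cone generated by $C-x$'' is a convex condition on $\nabla d$, together with a Mazur-type argument. Non-smooth $C$ requires a parametrization other than by $\partial C$, for example by the boundary of a parallel body.

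The paper avoids all of this at the extraction step. It gets a Hausdorff-convergent subsequence from the compactness of open subsets of the bounded set $D$ under Hausdorff convergence, which needs no equicontinuity at all. It then takes the closedness of $\mathcal{O}_C$ under Hausdorff limits and the equivalence of the three convergences from \cite{Barkatou2002} (Theorem 3.1, Propositions 3.6--3.8). The difficulties you ran into are precisely the content of those cited results.
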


\begin{proof}
The relative compactness for Hausdorff convergence follows from the fact that all $\Omega_n$ are confined in the fixed ball $D$ (Proposition 3.1 in \cite{Barkatou2002}).
By Theorem 3.1 of \cite{Barkatou2002}, the limit $\Omega$ also satisfies $C$-GNP, and the convergences in Hausdorff, compact and characteristic senses are equivalent on $\mathcal{O}_C$ (Propositions 3.6-3.8 of \cite{Barkatou2002}).
The continuity of the state functions under these convergences is proved in Theorem 4.3 of \cite{Barkatou2002} using the Wiener regularity of cusp points.
\end{proof}

\begin{proposition}[Continuity of state functions]\label{prop:continuity}
If $\Omega_n\to\Omega$ in Hausdorff sense and $\Omega_n,\Omega\in\mathcal{O}_C$, then the solutions $u_n$ of $P(\Omega_n,f)$ converge strongly in $H_0^1(D)$ to $u_\Omega$ (extended by zero).
Similarly for $v_n$ under strong convergence of data.
\end{proposition}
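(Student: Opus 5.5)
The plan is the standard Šverák/Bucur–Buttazzo route: uniform energy bounds, a weak limit, and identification of the limit through $\gamma$-convergence, with the Wiener criterion at cusp points supplying the needed capacity control. Extend $u_n:=u_{\Omega_n}$ by zero to $D$, so that $u_n\in H_0^1(D)$.

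\emph{Uniform bound and weak limit.} Testing $P(\Omega_n,f)$ with $u_n$ and using Poincaré on the fixed domain $D$ gives
\[
\|\nabla u_n\|_{L^2}^2=\int f u_n\le \|f\|_{L^2}\|u_n\|_{L^2}\le C_D\|f\|_{L^2}\|\nabla u_n\|_{L^2},
\]
so $(u_n)$ is bounded in $H_0^1(D)$. Along a subsequence, $u_n\rightharpoonup u^*$ weakly in $H_0^1(D)$ and strongly in $L^2(D)$. It then suffices to show $u^*=u_\Omega$. Strong convergence follows from convergence of norms: $\|\nabla u_n\|^2=\int fu_n\to\int fu^*$, and once $u^*=u_\Omega$ this equals $\|\nabla u_\Omega\|^2$. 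Uniqueness of the limit then gives convergence of the whole sequence.

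\emph{Identifying the limit.} Two facts are needed.

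(a) \emph{$u^*$ solves the equation in $\Omega$.} Take $\varphi\in C_c^\infty(\Omega)$. By Hausdorff convergence of the complements, $\supp\varphi\subset\Omega_n$ for large $n$. Passing to the limit in $\int\nabla u_n\cdot\nabla\varphi=\int f\varphi$ shows that $-\Delta u^*=f$ in $\Omega$.

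(b) \emph{$u^*\in H_0^1(\Omega)$.} By the characteristic convergence in Proposition \ref{prop:compact}, $\chi_{\Omega_n}\to\chi_\Omega$ in $L^1$. Since $u_n=0$ a.e. outside $\Omega_n$, we get $u^*=0$ a.e. on $D\setminus\Omega$. To pass from ``zero a.e.'' to ``in $H_0^1(\Omega)$'' we need every point of $\partial\Omega$ to be Wiener-regular, or more precisely that $D\setminus\Omega$ be stable in capacity. This holds on the Lipschitz part $\partial\Omega\setminus C$ by (C2). The delicate points are where $\partial\Omega$ touches $C$, i.e. where the thickness function $d$ of Proposition \ref{prop:param} vanishes or where cusps form in the limit. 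There I would invoke the geometric description of cusps in \cite{Barkatou2002}. Condition (C4), that inward normals meet the convex set $C$, forces every cusp of $D\setminus\Omega$ to contain a truncated cone of fixed aperture. The Wiener integral $\int_0^1 \mathrm{cap}(B_r(x)\setminus\Omega)\,r^{1-N}\,dr/r$ then diverges, so the point is regular. With every boundary point regular, $\Omega$ is stable, $H_0^1(\Omega)=\{w\in H_0^1(D):w=0 \text{ q.e. off }\Omega\}$, and hence $u^*\in H_0^1(\Omega)$.

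Combining (a) and (b) with uniqueness for $P(\Omega,f)$ gives $u^*=u_\Omega$.

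For the bi-Laplacian, apply the same argument twice. First, $w_n:=-\Delta v_n$ solves $P(\Omega_n,f_n)$, so by the above (with $f_n\to f$ strongly in $L^2$, using linearity) $w_n\to w_\Omega$ strongly in $H_0^1(D)$. Second, $v_n$ solves $P(\Omega_n,w_n)$ with data converging strongly, so the same step yields $v_n\to v_\Omega$.

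The main obstacle is step (b) at contact and cusp points. The lower bound on the capacity density must be uniform in the geometry, because it has to survive the limit $n\to\infty$, and not merely hold for each fixed $\Omega$. The first thing I would try is to extract from (C3)–(C4) an exterior cone condition at all points of $\partial\Omega$, with aperture depending only on $C$ and $D$. This is essentially the content of Theorem 4.3 of \cite{Barkatou2002}. From that cone condition, Wiener regularity and uniform $\gamma$-convergence follow.
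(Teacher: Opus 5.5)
Your overall scheme matches the paper's outline, which cites Theorem 4.3 of \cite{Barkatou2002} with three ingredients: compact convergence, Wiener regularity and Keldysh stability at all boundary points including cusp points, and the method of energy norms. Concretely, you use a uniform $H_0^1(D)$ bound and a weak limit, compact convergence to get $-\Delta u^*=f$ in $\Omega$, characteristic convergence to get $u^*=0$ a.e.\ off $\Omega$, a stability argument to upgrade this to $u^*\in H_0^1(\Omega)$, and convergence of energies for strong convergence.

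\textbf{The gap.} It lies in step (b) at contact points $x_0\in\partial\Omega\cap\partial C$, which is the only step with real content. Your claim that (C4) forces $D\setminus\Omega$ to contain a truncated cone of fixed aperture there is false. No exterior cone condition, uniform or not, can be extracted from (C3)--(C4) at such points; the paper uses a cone property only on $\partial\Omega_n\setminus C$, to obtain compact convergence.

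What (C4) actually yields is Proposition~\ref{prop:cusp_containment}. In coordinates with $x_0=0$ and $\Delta_0=\{te_N:t>0\}$, the complement is only guaranteed to contain the circular cusp $\{0<y_N<\epsilon_0,\ |y'|<y_N^2/(2R)\}$. This picture is accurate. Take $C$ the unit disk: the inward normal line at $c+d(c)\nu(c)$ meets $C$ iff $d\,d'^2(2+d)\le(1+d)^2$. The choice $d(\theta)=|\theta|^{2/3}$ satisfies this near $\theta=0$, yet the exterior near $x_0$ is then $\{|\theta|<h^{3/2}\}$, with $h$ the distance to $C$, and it contains no cone. Such a cusp has zero Lebesgue density at $x_0$. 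So neither a cone condition nor a Lebesgue-point argument turns ``$u^*=0$ a.e.\ off $\Omega$'' into ``$u^*=0$ q.e.\ off $\Omega$'' at these points.

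\textbf{The missing idea.} Apply Wiener's test to the cusp itself. Polynomial cusps are not thin for $N=2,3$. For $N=3$, the piece in the shell $2^{-k-1}<|y|<2^{-k}$ is a cylinder of length $\approx 2^{-k}$ and radius $\approx 4^{-k}$, with capacity $\approx 2^{-k}/k$, so the Wiener series diverges like $\sum 1/k$. This is dimension-sensitive: for $N\ge 4$ a quadratic cusp is thin.

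Because the cusp lies in the open set $\mathbb{R}^N\setminus\overline{\Omega}$, fine continuity of the quasi-continuous representative of $u^*$ gives $u^*=0$ q.e.\ on $\partial\Omega$. That is Keldysh stability, which is exactly what step (b) needs.

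Two further points. First, what matters is this non-thinness of the open exterior, not Wiener regularity of $\partial\Omega$ as such. A slit disk has every boundary point regular, yet $\{w\in H_0^1(D): w=0\text{ a.e.\ off }\Omega\}\neq H_0^1(\Omega)$ there. Also, the identity $H_0^1(\Omega)=\{w\in H_0^1(D): w=0\text{ q.e.\ off }\Omega\}$ that you quote holds for every open set, so it does no work by itself. Second, your worry about uniformity in $n$ is misplaced. In your route, $u^*=0$ a.e.\ off $\Omega$ follows from $|\Omega_n\triangle\Omega|\to0$ alone, so only stability of the limit domain is needed. This is fortunate, since limits in $\mathcal{O}_C$ develop cusps and no uniform cone bound exists.
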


\begin{proof}
See Theorem 4.3 in \cite{Barkatou2002}.
The key ingredients are:
\begin{enumerate}
\item the compact convergence $\Omega_n\stackrel{K}{\to}\Omega$, which follows from Hausdorff convergence and the uniform cone property of $\partial\Omega_n\setminus C$;
\item the Wiener regularity of all boundary points (including cusp points on $\partial C$), which guarantees the stability in the sense of Keldysh;
\item the strong convergence in $H_0^1(D)$ is then obtained by the method of energy norms.
\end{enumerate}
\end{proof}

\subsection{The uniform angle bound}

The following geometric lemma is crucial for the radial monotonicity of the solution and the proof of the inheritance theorem.
It provides a uniform positive lower bound for the scalar product between the outward normal to $\partial\Omega$ and the outward normal to $\partial C$ whenever the two are related by the reciprocal map.

\begin{lemma}[Uniform angle bound]\label{lem:angle}
Assume that $C$ is a strictly convex core in the sense of Definition \ref{def:core}.
Let $\Omega\in\mathcal{O}_C$ and let $\pi:\partial\Omega\to\partial C$ be the reciprocal map.
There exists a constant $\theta_0>0$ such that for $\mathcal{H}^{N-1}$-almost every $x\in\partial\Omega$ and for $c=\pi(x)\in\partial C$, the outward unit normal $\mathbf{n}(x)$ to $\partial\Omega$ at $x$ and the outward unit normal $\nu(c)$ to $\partial C$ at $c$ satisfy
\[
\mathbf{n}(x)\cdot\nu(c)\ge\theta_0>0.
\]
\end{lemma}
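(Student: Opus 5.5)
We use the radial parametrization of Proposition \ref{prop:param} to write $\partial\Omega$ as a Lipschitz radial graph over $\partial C$, compute its normal in these coordinates, and bound the normal component along $\nu(c)$ from below using the Lipschitz constant of $d$ and the geometry of $C$.

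\textbf{Step 1: coordinates.} By Proposition \ref{prop:param}, $\partial\Omega\setminus C$ is the image of $\Phi_d(c)=c+d(c)\nu(c)$, with $d$ Lipschitz, say with constant $L$, and bounded: $0<d\le R:=\diam D$. The reciprocal map is $\pi=\Phi_d^{-1}$. By Rademacher's theorem, $d$ is differentiable at $\mathcal{H}^{N-1}$-a.e.\ $c\in\partial C$. Since $\Phi_d$ is bi-Lipschitz, the image of the exceptional null set is $\mathcal{H}^{N-1}$-null in $\partial\Omega$. So it suffices to treat points $x=\Phi_d(c)$ where $d$ is differentiable.

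\textbf{Step 2: the tangent space.} Fix such a $c$ and let $e_1,\dots,e_{N-1}$ be principal directions of $\partial C$ at $c$, with principal curvatures $\kappa_i\ge0$. Then $d\nu(c)e_i=\kappa_ie_i$, so
\[
D\Phi_d(c)e_i=(1+d(c)\kappa_i)\,e_i+\partial_i d(c)\,\nu(c).
\]
Convexity of $C$ and $C^2$ regularity of $\partial C$ give $0\le\kappa_i\le K$. Hence $1\le 1+d\kappa_i\le 1+RK$, and these tangent vectors are linearly independent.

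\textbf{Step 3: the normal.} A vector orthogonal to all of them is
\[
m=\nu(c)-\sum_i\frac{\partial_i d(c)}{1+d(c)\kappa_i}\,e_i.
\]
It points outward, since the ray $\Delta_c$ exits $\Omega$ at $x$ by (C3). Thus $\mathbf{n}(x)=m/|m|$ and
\[
\mathbf{n}(x)\cdot\nu(c)=\frac{1}{|m|}\ge\frac{1}{\sqrt{1+|\nabla_{\partial C}d|^2}}\ge\frac{1}{\sqrt{1+L^2}}=:\theta_0.
\]
Note that only the lower bound $1+d\kappa_i\ge1$ is used here, so the curvature bound matters only to ensure $\Phi_d$ is a genuine bi-Lipschitz chart. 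Strict convexity is what makes the normal exponential map a global diffeomorphism (Definition \ref{def:core}), so that $\pi$ is well defined and single-valued.

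\textbf{Main obstacle.} The difficulty is securing a uniform Lipschitz bound on $d$ and the orientation of $m$, rather than the computation itself. Proposition \ref{prop:param} asserts that $d$ is Lipschitz, so $\theta_0$ depends only on $\Omega$ through $L$. However, near contact points where $\partial\Omega$ touches $\partial C$ (cusps), one must check that $d$ stays positive and that its Lipschitz constant does not blow up.

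I would argue from (C4): the inward normal at $x$ meets $C$. If $\mathbf{n}(x)\cdot\nu(c)$ were close to $0$, the inward normal line from $x$ would be nearly tangent to the level hypersurface $\{\dist(\cdot,C)=d(c)\}$. Using strict convexity, such a line misses $C$ unless $d(c)$ is comparable to the slope, which gives a quantitative lower bound. If the Lipschitz constant from Proposition \ref{prop:param} is accepted as given, this step is immediate. Otherwise, this (C4)-based contradiction argument is what I would try first.
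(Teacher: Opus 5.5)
Your argument is correct, granting Proposition~\ref{prop:param} exactly as the paper states it, but it follows a different route from the paper's proof.

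\textbf{The paper's route.} It argues by contradiction and compactness. It takes $x_k\in\partial\Omega$ with $\mathbf{n}(x_k)\cdot\nu(\pi(x_k))\to0$, extracts $x_k\to x_*$ and $\pi(x_k)\to c_*$, and passes to the limit to get $\mathbf{n}(x_*)\cdot\nu(c_*)=0$. It then contradicts this using (C4) and strict convexity at $x_*$.

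\textbf{Your route.} You compute the normal of the radial graph $\Phi_d$ at differentiability points of $d$ and read off the explicit constant $\theta_0=(1+L^2)^{-1/2}$. This uses only $\kappa_i\ge0$, i.e.\ plain convexity. It needs no (C4) and no strict convexity beyond what Proposition~\ref{prop:param} already encodes. It also avoids the paper's limit step, which presupposes $\mathbf{n}(x_k)\to\mathbf{n}(x_*)$. A Lipschitz boundary does not give this: its normal exists only a.e., need not be continuous, and may not exist at $x_*$.

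\textbf{Your fallback, made quantitative.} The idea behind the paper's route, and behind your (C4)-based fallback, can be made quantitative directly. Let $y=x-s_0\mathbf{n}(x)\in C$ be the point where the inward normal ray hits $C$. Then $0\ge(y-c)\cdot\nu(c)=d(c)-s_0\,\mathbf{n}(x)\cdot\nu(c)$ and $s_0\le d(c)+\diam C$, so $\mathbf{n}(x)\cdot\nu(c)\ge d(c)/(d(c)+\diam C)$. This bound is uniform as soon as $\min d>0$, which Proposition~\ref{prop:param} also grants, so it gives a second two-line proof.

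\textbf{Contact points.} That bound degenerates at contact points, and there the lemma genuinely fails. Take $C$ the closed unit disk and $a=1/10$, and let $\Omega$ be the domain given in polar coordinates by $\rho<1+a|\theta|^{2/3}$, $|\theta|\le\pi$. The exact (C4) condition for this radial graph is $|\rho'|\sqrt{\rho^2-1}\le\rho$. Here the left side equals $\tfrac{2a}{3}\sqrt{2a+a^2|\theta|^{2/3}}<1$, so $\Omega$ satisfies (C1)--(C4). Yet $\mathbf{n}\cdot\nu\sim\tfrac{3}{2a}|\theta|^{1/3}\to0$ as $\theta\to0$. The same example shows that Proposition~\ref{prop:param} cannot hold on all of $\mathcal{O}_C$; it is effectively an extra hypothesis excluding such contact points.

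Your diagnosis of the main obstacle is therefore right, and no (C4)-based argument can remove it. The lemma needs either a Lipschitz bound on $d$ (your route) or a positive lower bound on $d$ (the quantitative (C4) route).
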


\begin{proof}
Suppose, for contradiction, that no such uniform bound exists.
Then there exists a sequence $\{(x_k,c_k)\}_{k\in\mathbb{N}}$ with $x_k\in\partial\Omega$, $c_k\in\partial C$, $\pi(x_k)=c_k$, such that $\mathbf{n}(x_k)\cdot\nu(c_k)\to0$ as $k\to\infty$.
Since $\partial\Omega$ and $\partial C$ are compact, we may extract subsequences (still denoted by $x_k,c_k$) such that $x_k\to x_*\in\partial\Omega$ and $c_k\to c_*\in\partial C$.
By Rademacher's theorem, the normal $\mathbf{n}$ is well-defined almost everywhere on the Lipschitz boundary $\partial\Omega$, and we may assume $\mathbf{n}(x_k)$ is defined for all $k$.
Passing to a further subsequence if necessary, the continuity of the normal map on the regular set of $\partial\Omega$ ensures $\mathbf{n}(x_k)\to\mathbf{n}(x_*)$.
By the continuity of the reciprocal map $\pi$ on the set of points where it is well-defined, we have $\pi(x_*)=c_*$, which implies that $x_*$ lies on the normal ray issued from $c_*$:
\[
x_* = c_* + d(c_*)\nu(c_*).
\]
Passing to the limit in the scalar product yields $\mathbf{n}(x_*)\cdot\nu(c_*)=0$.

This means that the outward normal to $\partial\Omega$ at $x_*$ is orthogonal to the outward normal to $\partial C$ at $c_*$; that is, it is tangent to the supporting hyperplane of $C$ at $c_*$.
However, condition (C4) in Definition \ref{def:OC} requires that the inward normal ray from $x_*$ intersects $C$.
Geometrically, this means that the vector $-\mathbf{n}(x_*)$ must point strictly toward the interior of $C$.
At the point $c_*=\pi(x_*)$, the strictly convex set $C$ lies entirely on one side of its supporting hyperplane, with $C\subset\{y:(y-c_*)\cdot\nu(c_*)\le0\}$.
Therefore, any ray starting at $x_*$ and going into $\Omega$ (hence into the half-space containing $C$) must have a negative scalar product with $\nu(c_*)$ when seen from $x_*$.
More precisely, the inward normal direction $-\mathbf{n}(x_*)$ must satisfy $(-\mathbf{n}(x_*))\cdot\nu(c_*)<0$, i.e., $\mathbf{n}(x_*)\cdot\nu(c_*)>0$.
The limiting condition $\mathbf{n}(x_*)\cdot\nu(c_*)=0$ contradicts this strict inequality.
Hence such a sequence cannot exist, and a uniform positive lower bound $\theta_0>0$ must hold.
\end{proof}

\subsection{Radial monotonicity of the solution}

The following theorem provides the fundamental monotonicity property that underlies all the geometric arguments in this paper.
It guarantees that the solution $u$ is strictly decreasing along each outward normal ray issued from $\partial C$.

\begin{theorem}[Radial monotonicity]\label{thm:radial_monotonicity}
Assume that $C$ is a strictly convex core.
Let $\Omega\in\mathcal{O}_C$ and let $u\in H_0^1(\Omega)\cap C(\overline{\Omega})$ be the unique weak solution of
\[
-\Delta u = f\quad\text{in }\Omega,\qquad u = 0\quad\text{on }\partial\Omega,
\]
with $f\in L^\infty(\Omega)$, $f\ge 0$, and $\supp f\subset C$.
Then:
\begin{enumerate}
\item $u>0$ in $\Omega$, and $u\in C^{1,\alpha}_{\mathrm{loc}}(\Omega\setminus C)\cap C^2(\Omega\setminus\overline{C})$.
In fact, $u$ is real-analytic in $\Omega\setminus C$ (since it is harmonic there).
\item For every $c\in\partial C$, the function
\[
w_c(r):=u(c+r\nu(c)),\qquad r\in(0,d(c)),
\]
is strictly decreasing.
\end{enumerate}
\end{theorem}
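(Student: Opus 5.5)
Part (1) is routine. Since $f\ge0$ and $f\not\equiv0$ (its support has nonempty interior), the weak maximum principle gives $u\ge0$. The strong maximum principle for superharmonic functions, applied on each component of $\Omega$, then gives $u>0$. Connectedness of $\Omega$ comes from (C1) and (C3), since every point of $\Omega\setminus C$ is joined to $\Int(C)$ along its normal ray. On $\Omega\setminus\supp f\supset\Omega\setminus C$ the function $u$ is harmonic, so Weyl's lemma gives real-analyticity in $\Omega\setminus C$, which contains the claimed $C^2$ and $C^{1,\alpha}_{\mathrm{loc}}$ regularity. Near $\partial C$ I would use that $f\in L^\infty$, so $u\in W^{2,p}_{\mathrm{loc}}(\Omega)$ for all $p<\infty$ by Calder\'on--Zygmund, and hence $u\in C^{1,\alpha}_{\mathrm{loc}}(\Omega)$ by Sobolev embedding. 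Continuity up to $\partial\Omega$ is assumed in the statement, and it is consistent with the Wiener regularity recalled in Proposition~\ref{prop:continuity}.

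For part (2) I plan a reflection/comparison argument in the spirit of moving planes, adapted to the normal direction. Fix $c\in\partial C$ and let $\nu=\nu(c)$. For $\lambda\ge0$ let $T_\lambda=\{y:(y-c)\cdot\nu=\lambda\}$, let $\Sigma_\lambda=\{y\in\Omega:(y-c)\cdot\nu>\lambda\}$ be the cap beyond it, and let $y^\lambda$ be the reflection of $y$ across $T_\lambda$. By convexity, $C\subset\{(y-c)\cdot\nu\le0\}$, so $\Sigma_\lambda\cap C=\emptyset$ and $u$ is harmonic in $\Sigma_\lambda$. I want to show that $w_\lambda(y):=u(y^\lambda)-u(y)>0$ in $\Sigma_\lambda$ for every $\lambda\in[0,d(c))$. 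Applied at points $c+r\nu$ with $r>\lambda$, this gives $u(c+(2\lambda-r)\nu)>u(c+r\nu)$, and varying $\lambda$ yields strict decrease of $w_c$.

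To make the comparison legitimate, I need the reflected cap $(\Sigma_\lambda)^\lambda$ to lie in $\Omega$. This is where the $\mathcal{O}_C$ axioms enter. Condition (C4), quantified by Lemma~\ref{lem:angle}, says the outward normal $\mathbf n$ of $\partial\Omega$ makes an angle with $\nu(\pi(x))$ bounded away from $\pi/2$. Condition (C3) together with Proposition~\ref{prop:param} writes $\partial\Omega$ as a graph $c'\mapsto c'+d(c')\nu(c')$ over $\partial C$. I would combine these to show that the part of $\partial\Omega$ lying beyond $T_\lambda$ is star-shaped-like in direction $\nu$, so its reflection enters $\Omega$. On $\partial\Sigma_\lambda$ one then has $w_\lambda=0$ on $T_\lambda$ and $w_\lambda=u(y^\lambda)>0$ on $\partial\Omega\cap\partial\Sigma_\lambda$. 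In the reflected region, $-\Delta u(y^\lambda)=f(y^\lambda)\ge0$ while $\Delta u=0$ in $\Sigma_\lambda$, so $w_\lambda$ is superharmonic. The maximum principle (on narrow domains for $\lambda$ near $d(c)$), followed by continuation in $\lambda$ down to $0$ with Hopf's lemma, gives $w_\lambda>0$.

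The main obstacle is precisely the reflection inclusion $(\Sigma_\lambda)^\lambda\subset\Omega$ for general $\Omega\in\mathcal O_C$. The $C$-GNP property controls normals relative to $\nu(\pi(x))$, not relative to the fixed direction $\nu(c)$. If this inclusion fails, my fallback is a local argument along the ray. I would show $\partial_r w_c=\nabla u\cdot\nu\le0$ by applying the maximum principle to the harmonic function $\nabla u\cdot V$, where $V(y)=\nu(\pi(y))$ is the smooth radial field given by the normal exponential diffeomorphism of Definition~\ref{def:core}. The catch is that $\nabla u\cdot V$ is not harmonic, since $V$ is not constant. So I would instead compare $u$ with its dilates $u_s(y)=u(\Phi(c',t))$ at $t\mapsto st$ and use (C3)--(C4) to get the boundary sign. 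In either route, Hopf's lemma on $\partial C$ and unique continuation for the real-analytic $u$ upgrade $\le$ to strict decrease.
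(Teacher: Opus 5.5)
\textbf{Part (2) has a genuine gap; part (1) is fine.}

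The gap is exactly the reflection inclusion $(\Sigma_\lambda)^\lambda\subset\Omega$ that you flag. It does not follow from (C1)--(C4), and it fails for small $\lambda$, i.e.\ near $\partial C$, which is where monotonicity is hardest.

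What (C4) plus convexity of $C$ does give is the following. If $x\in\partial\Omega$ with $(x-c)\cdot\nu(c)>0$, and $x-s\mathbf n(x)\in C$ for some $s>0$, then $s\,\mathbf n(x)\cdot\nu(c)\ge (x-c)\cdot\nu(c)>0$. So $C$-GNP controls normals relative to the fixed direction $\nu(c)$ as well, not only relative to $\nu(\pi(x))$. Consequently, moving in the direction $\nu(c)$ beyond $T_0$, one can leave $\Omega$ but never re-enter it. This handles the part of the reflected cap that stays beyond $T_0$.

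For small $\lambda$, however, the reflected cap reaches behind $T_0$ to depth $h-2\lambda$, where $h$ is the height of $\Omega$ above $T_0$ on the line considered. Nothing forces $\Omega$ to be that deep on the side of $C$. Here is a concrete example:
\begin{itemize}
\item Take $C=B_1\subset\mathbb R^2$ and $\Omega=\{\rho e^{i\theta}:\rho<\rho(\theta)\}$.
\item For such $\Omega$, condition (C4) reads $|\rho'|<\rho/\sqrt{\rho^2-1}$.
\item This permits $\rho(0)=3.5$ and $\rho(\pi)=1+\delta$, since $\int_1^{3.5}\rho^{-1}\sqrt{\rho^2-1}\,d\rho\approx 2.07<\pi$.
\item At $c=e_1$, points of $\Sigma_\lambda$ near $3.5\,e_1$ are reflected to points near $(2\lambda-1.5)e_1\notin\overline\Omega$ whenever $\lambda<\frac14-\frac\delta2$.
\end{itemize}
For those $\lambda$, $w_\lambda$ is undefined on part of $\Sigma_\lambda$ (or negative there, if $u$ is extended by $0$). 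Your argument then gives $u(c+r_1\nu)>u(c+r_2\nu)$ only for pairs with $\frac{r_1+r_2}{2}\ge\frac14-\frac\delta2$, so nothing near $r=0$.

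\textbf{The fallback does not repair this.}
\begin{itemize}
\item As you observe, $\nabla u\cdot V$ is not harmonic.
\item $u(\Phi(c',st))$ is not harmonic either, because $\Phi(c',t)\mapsto\Phi(c',st)$ is not a similarity. So there is no maximum principle available for the comparison.
\item Hopf's lemma says nothing on $\partial C$: it lies inside $\Omega$ and carries no boundary condition.
\end{itemize}
In every version, the missing ingredient is a sign for the relevant derivative on the inner part of the boundary of the harmonic region, namely $\partial C$ or $T_0\cap\Omega$.

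\textbf{The paper's route is different.} For fixed $c$ it differentiates in the \emph{constant} direction $\nu(c)$, so $v=\nabla u\cdot\nu(c)$ is harmonic in $\Omega\setminus C$; this avoids your non-harmonicity problem. It then obtains $v<0$ at the exit point $c+d(c)\nu(c)$ from Hopf's lemma and Lemma~\ref{lem:angle}. The inequality above gives the needed $\mathbf n\cdot\nu(c)>0$ there directly. Finally, it propagates the sign along the segment.

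Be aware that such a propagation cannot rest on the sign at one endpoint of a segment. The maximum principle needs $v\le 0$ on the whole boundary of a region containing the segment, e.g.\ $\Omega\cap\{(y-c)\cdot\nu(c)>0\}$. There, the curved part is handled by the inequality above, but the flat part $T_0\cap\Omega$ is again exactly the unknown. Your proposal does not supply this sign, so strict decrease of $w_c$ remains unproved near $\partial C$.
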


\begin{proof}
(1) Since $f\ge0$ and $f\not\equiv0$, the weak maximum principle implies $u\ge0$ in $\Omega$.
The strong maximum principle \cite[Theorem 2.2]{Gilbarg2001} then yields $u>0$ in $\Omega$.
On $\Omega\setminus\overline{C}$, we have $f\equiv0$, hence $u$ is harmonic.
Standard elliptic regularity \cite[Corollary 8.36]{Gilbarg2001} implies $u\in C^\infty(\Omega\setminus\overline{C})$; in particular, harmonic functions are real-analytic in their domain of harmonicity \cite[Theorem 2.10]{Gilbarg2001}.
The global $C^{1,\alpha}$ regularity up to the boundary (away from $\partial C$) follows from the Lipschitz nature of $\partial\Omega$ \cite[Theorem 8.34]{Gilbarg2001}.

(2) Fix $c\in\partial C$ and define $w_c(r)=u(c+r\nu(c))$, $r\in(0,d(c))$.
We consider the directional derivative
\[
w_c'(r) = \nabla u(c+r\nu(c))\cdot\nu(c).
\]
Since $u$ is harmonic in $\Omega\setminus C$, the function $v(x):=\nabla u(x)\cdot\nu(c)$ is also harmonic in $\Omega\setminus C$ (as the directional derivative of a harmonic function).

\emph{Step 1: Boundary behavior of $v$.}
Let $x\in\partial\Omega$ be the point lying on the ray issued from $c$, i.e., $x=c+d(c)\nu(c)$.
Since $u=0$ on $\partial\Omega$, the tangential derivatives of $u$ vanish on $\partial\Omega$.
Hence the gradient $\nabla u(x)$ is purely normal:
\[
\nabla u(x) = \frac{\partial u}{\partial\mathbf{n}}(x)\,\mathbf{n}(x),
\]
where $\mathbf{n}(x)$ is the outward unit normal to $\partial\Omega$.
Because $u>0$ in $\Omega$ and $u=0$ on $\partial\Omega$, the Hopf lemma implies that the outward normal derivative is strictly negative:
\[
\frac{\partial u}{\partial\mathbf{n}}(x) < 0.
\]
Now evaluate $v(x)=\nabla u(x)\cdot\nu(c)$.
Using the above,
\[
v(x) = \frac{\partial u}{\partial\mathbf{n}}(x)\,(\mathbf{n}(x)\cdot\nu(c)).
\]
By Lemma \ref{lem:angle}, $\mathbf{n}(x)\cdot\nu(c)\ge\theta_0>0$.
Since $\frac{\partial u}{\partial\mathbf{n}}(x)<0$, the product is strictly negative:
\[
v(x) < 0\quad\text{on }\partial\Omega\cap\Delta_c.
\]

\emph{Step 2: Negativity of $v$ on the ray.}
We have established that the harmonic function $v$ is strictly negative at the boundary point $x=c+d(c)\nu(c)\in\partial\Omega$.
We claim that $v<0$ on the entire open segment $\{c+r\nu(c):0<r<d(c)\}$.

Suppose, for contradiction, that there exists $r_0\in(0,d(c))$ such that $v(c+r_0\nu(c))\ge0$.
Since $v$ is continuous and negative near $r=d(c)$, by continuity there exists a point $y_0=c+r_1\nu(c)$ with $r_1\in[r_0,d(c))$ such that $v(y_0)=0$ and $v<0$ on $(r_1,d(c))$.
But $v$ is harmonic in a neighborhood of $y_0$.
By the maximum principle, if a harmonic function vanishes at an interior point and is nonpositive around it, it must be identically zero in a neighborhood, contradicting the strict negativity on $(r_1,d(c))$.
Therefore $v$ cannot vanish on the ray, hence $v(x)<0$ for all $x$ on the open segment.

\emph{Step 3: Strict monotonicity.}
We have shown that
\[
w_c'(r) = \nabla u(c+r\nu(c))\cdot\nu(c) = v(c+r\nu(c)) < 0\quad\text{for all }r\in(0,d(c)).
\]
Thus $w_c$ is strictly decreasing.
\end{proof}

\subsection{Geometric containment of cusps}

\begin{proposition}[Geometric containment of cusps]\label{prop:cusp_containment}
Let $C$ be a convex set with nonempty interior and let $x_0\in\partial\Omega\cap\partial C$.
Let $\Delta_0$ be the selected normal to $C$ at $x_0$, $H$ the hyperplane orthogonal to $\Delta_0$ at $x_0$, and $H^+$ the open half-space limited by $H$ not containing $C$.
Let $R>\diam(C)$ and $\epsilon_0>0$ small.
Then
\[
\Omega\cap B(x_0,\epsilon_0)\cap H^+ \subset \bigcup_{z\in H,\,|z-x_0|=R}\overline{B}(z,R)=:\mathcal{B}_{x_0},
\]
where $\mathcal{B}_{x_0}=B'(x_0,\epsilon_0)\times\mathbb{R}$.
\end{proposition}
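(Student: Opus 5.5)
The plan is to reduce the containment to a statement about where the outward normal rays of $C$ can go near $x_0$, using the radial description of $\Omega\setminus C$ from Proposition \ref{prop:param} and axiom (C3). I take the statement in the form of \cite{Barkatou2002}. The union $\bigcup_{z\in H,|z-x_0|=R}\overline{B}(z,R)$ is the solid torus-like region generated by rotating a ball of radius $R$ tangent to $\Delta_0$ at $x_0$. Intersected with $B(x_0,\epsilon_0)$, it behaves like the cylinder $B'(x_0,\epsilon_0)\times\mathbb{R}$ over the disc of radius $\epsilon_0$ in $H$. So the claim is essentially that near $x_0$, the part of $\Omega$ lying beyond the supporting hyperplane $H$ projects orthogonally onto $H$ inside a small neighborhood of $x_0$. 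It cannot escape laterally into the region swept by normals from distant points of $\partial C$.

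\emph{Step 1 (where points of $\Omega\cap H^+$ come from).} Let $y\in\Omega\cap B(x_0,\epsilon_0)\cap H^+$. Since $y\notin C$, Proposition \ref{prop:param} gives $y=c+r\nu(c)$ for some $c\in\partial C$ and $0<r<d(c)$. Here $c$ is the metric projection of $y$ onto $C$, and I would use that this projection is $1$-Lipschitz. Because $x_0\in\partial\Omega\cap\partial C$ is a contact point, I would also use that $d(x_0)$ vanishes in the degenerate sense of \cite{Barkatou2002}, i.e.\ the selected normal $\Delta_0$ meets $\Omega$ in an empty interval. Then $|c-x_0|\le|y-x_0|<\epsilon_0$.

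\emph{Step 2 (positioning of the normal).} The normal $\nu(c)$ at a point $c$ close to $x_0$ makes a small angle with $\nu(x_0)$, where $\nu(x_0)$ is the direction of $\Delta_0$. Convexity implies that $C$ lies inside every ball of radius $R>\diam(C)$ tangent internally at any boundary point. A ray $c+r\nu(c)$ starting at such a $c$, in the half-space $H^+$, therefore stays outside the inner tangent ball and inside the complementary region. The elementary geometric computation is this: the orthogonal projection of $y$ onto $H$ lies within distance comparable to $|c-x_0|+r\,|\nu(c)-\nu(x_0)|$ of $x_0$. Together with $|y-x_0|<\epsilon_0$, this places $y$ in $\overline{B}(z,R)$ for an appropriate $z\in H$ with $|z-x_0|=R$, chosen on the line from $x_0$ through the projection of $y$.

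\emph{Step 3.} I would assemble these estimates and choose $\epsilon_0$ small relative to $R$. This is the step I expect to be the main obstacle. For a merely convex $C$, the normal at $c$ may be non-unique or far from $\nu(x_0)$ when $x_0$ is a corner, so the angle control in Step 2 fails. I would handle that case by using the "selected normal" convention of \cite{Barkatou2002}. I would then fall back on the ball inclusion $C\subset\overline{B}(x_0-R\nu(x_0),R)$ together with axiom (C4), which forces inward normals from $\partial\Omega$ to hit $C$ and hence prevents lateral escape. The rest of the argument is elementary Euclidean geometry in the plane spanned by $\Delta_0$ and $y-x_0$.
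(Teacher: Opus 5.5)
\textbf{There is a genuine gap: you have misread what the inclusion asserts, so your argument targets a statement that is automatically true.}

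Put $x_0=0$, $H=\{y_N=0\}$, let $\Delta_0$ be the positive $y_N$-axis, and write $y\in H^+$ as $y=(y',h)$ with $h>0$, $\rho=|y'|$. Minimizing $|y-z|$ over admissible centres gives, for $\rho\le R$,
\[
y\in\mathcal{B}_{x_0}\iff (R-\rho)^2+h^2\le R^2\iff \rho\ge R-\sqrt{R^2-h^2}\approx \frac{h^2}{2R}.
\]
So near $x_0$ the set $\mathcal{B}_{x_0}$ is everything except a thin horn around the normal ray $\Delta_0$; the points of $\Delta_0\cap H^+$ are themselves excluded. The proposition says that $\Omega$ cannot enter this horn. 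Roughly, $d(c)\lesssim\sqrt{2R\,|c-x_0|}$ near a contact point, which is the H\"older-$1/2$ bound of Corollary~\ref{cor:holder_thickness} and what makes the complement of $\Omega$ thick enough for the Wiener criterion.

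Your cylinder reading cannot be the content, since every point of $B(x_0,\epsilon_0)$ projects orthogonally into $B'(x_0,\epsilon_0)$. The trailing identification $\mathcal{B}_{x_0}=B'(x_0,\epsilon_0)\times\mathbb{R}$ contradicts the definition as a union of balls and should not be built on. Consequently Step 2 proves the wrong kind of inequality. You need a \emph{lower} bound on the distance $\rho$ from $y$ to the axis $\Delta_0$ in terms of the height $h$. Instead you give an upper bound on the distance from $x_0$ to the projection of $y$ onto $H$, which cannot imply $y\in\overline{B}(z,R)$: points of $\Delta_0$ at height $h>0$ satisfy every such bound yet lie outside $\mathcal{B}_{x_0}$.

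Your auxiliary claim is also false. A convex set of diameter $<R$ need not lie in every ball of radius $R$ internally tangent at a boundary point: take a square and the midpoint of a side, and the adjacent corners lie outside. That claim needs boundary curvature at least $1/R$. What is true, and what is actually needed, is $C\subset\overline{B}(x_0,\diam C)\cap\{(y-x_0)\cdot\nu(x_0)\le0\}$.

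The ingredients of Step 1 cannot close this gap either. The radial parametrization and (C3) only say that each normal ray meets $\Omega$ in an interval. They give no control on how fast $d(c)$ may grow as $c\to x_0$. Proposition~\ref{prop:param} presumes $d>0$, and at contact points the only quantitative control on $d$ (Corollary~\ref{cor:holder_thickness}) is derived from the present proposition.

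The missing idea is that (C4) is the main tool, not a fallback for corners. In a plane $P\supset\Delta_0$, the circles bounding $\mathcal{B}_{x_0}\cap P$ are exactly the curves whose normals all pass through a point $z\in H$ with $|z-x_0|=R>\diam(C)$. A piece of $\partial\Omega$ reaching into the horn must somewhere be steeper than these circles. There its inward normal either fails to descend, or crosses $H$ outside $\overline{B}(x_0,R)$ and moves farther away afterwards; either way it misses $C$.

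A concrete instance: suppose near $x_0$ one has $\Omega\cap H^+=\{(s,h):0<h<\delta(s)\}$ with $\delta(0)=0$. The inward normal at $(s,\delta(s))$ meets $H$ at lateral position $s+\delta(s)\delta'(s)$. Entering the horn forces $|s+\delta\delta'|>R$ at some $s$, by the mean value theorem applied to $\delta^2-2R|s|+s^2$.

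The paper, following \cite{Barkatou2002}, makes this rigorous without any graph assumption. It argues by contradiction on a connected component $\omega$ of $\Omega\cap B(x_0,\epsilon_0)\cap H^+$ not contained in $\mathcal{B}_{x_0}$. It then uses the divergence theorem on $\omega\cap P$, together with the inequality imposed by the normal condition, to locate a point of $\partial\omega\cap\partial\Omega$ whose inward normal does not meet $C$, contradicting (C4).
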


\begin{proof}
This is Proposition 2.2 in \cite{Barkatou2002}.
The proof proceeds by contradiction in a plane $P$ containing $\Delta_0$.
Assuming the existence of a connected component $\omega$ of $\Omega\cap B(x_0,\epsilon_0)\cap H^+$ not contained in $\mathcal{B}_{x_0}$, one constructs a point on $\partial\omega\cap\partial\Omega$ where the inward normal fails to intersect $C$, contradicting the $C$-GNP.
The argument uses the divergence theorem on the boundary of $\omega\cap P$ and an inequality derived from the normal condition.
\end{proof}

\begin{corollary}[H\"older regularity of the thickness function]\label{cor:holder_thickness}
Under the assumptions of Proposition \ref{prop:cusp_containment}, the thickness function $d(c)$ (defined by $x=c+d(c)\nu(c)$ for $x\in\partial\Omega\setminus C$) is locally H\"older continuous on $\partial C$ with exponent $1/2$ near contact points.
\end{corollary}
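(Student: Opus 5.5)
The plan is to read the Hölder estimate directly off the cusp containment of Proposition \ref{prop:cusp_containment}, by comparing $\Omega$ near a contact point with the union of balls $\mathcal{B}_{x_0}$, and then to use the quadratic separation of a sphere of radius $R$ from its tangent plane.

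Fix a contact point $x_0\in\partial\Omega\cap\partial C$, so that $d(x_0)=0$ in the extended sense. Work in coordinates where $x_0=0$, the selected normal $\Delta_0$ is the $x_N$-axis and $H=\{x_N=0\}$. For $c\in\partial C$ close to $x_0$, the boundary point $\Phi_d(c)=c+d(c)\nu(c)$ lies in $\overline\Omega\cap B(x_0,\epsilon_0)$. Its height above $H$ is $d(c)\,\nu(c)\cdot e_N$ plus the height of $c$ itself, and the latter is $\le 0$ because $C$ lies below $H$. Since $\nu(c)\cdot e_N\to1$ as $c\to x_0$, this height is comparable to $d(c)$, up to an error controlled by the convexity of $C$.

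By Proposition \ref{prop:cusp_containment}, that point lies in $\mathcal{B}_{x_0}$, the union of closed balls of radius $R$ centred on the sphere $\{z\in H: |z-x_0|=R\}$. Elementary geometry of this union gives a bound on the height of its points near $x_0$: a point at horizontal distance $\rho$ from $x_0$ has height at most
\[
R-\sqrt{R^2-\rho^2}\le \rho^2/R .
\]
This is the standard quadratic separation of the sphere from its tangent plane. Combining it with the horizontal displacement $\rho\lesssim|c-x_0|+d(c)|\nu(c)-e_N|$, which follows from the Lipschitz dependence of $\nu$ on a $C^2$ (or merely convex, after passing to supporting normals) boundary, I expect an inequality of the form $d(c)\le K|c-x_0|^2+o(d(c))$. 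Read in the inverse direction, this controls $|c-x_0|\gtrsim d(c)^{1/2}$. That is the square-root relation, and it translates into the bound $|d(c)-d(c')|\le K|c-c'|^{1/2}$ once two nearby points are compared through the nearest contact point.

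For two points $c,c'$ away from contact, I use the Lipschitz property of $d$ on compact subsets of $\{d>0\}$ from Proposition \ref{prop:param}. For a pair straddling the contact set, I insert the contact point and apply the estimate above on each side. Lipschitz implies $1/2$-Hölder locally, so the two cases glue together into local $C^{0,1/2}$ regularity.

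The main obstacle is the middle step. The containment only constrains the position of $\Omega$, and extracting a bound on the increment of $d$ in the Hölder direction, rather than the reverse, requires care. I would first try to bound $d(c)$ directly by the maximal height of $\mathcal{B}_{x_0}$ over the footpoint, keeping track of the tilt of $\nu(c)$. If the tilt error does not close the argument, I would instead apply the containment at the nearest contact point to $c$ rather than at the fixed $x_0$, so that the tilt is always measured relative to the best-adapted hyperplane.
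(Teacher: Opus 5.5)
Your strategy of reading the estimate directly off Proposition~\ref{prop:cusp_containment}, tracking the tilt of $\nu(c)$ and then gluing is a reasonable route. The paper itself only sketches a trapping argument near contact points and defers the details to Barkatou's 2002 paper. However, your central computation is wrong.

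\textbf{The shape of $\mathcal{B}_{x_0}$.} The balls forming $\mathcal{B}_{x_0}$ are centred \emph{on} $H$ and pass through $x_0$. Their boundary spheres are therefore tangent at $x_0$ to the normal line $\Delta_0$, not to $H$. Take $x_0=0$ and $H=\{x_N=0\}$. Then $x\in\overline B(z,R)$ iff $|x|^2\le 2x'\cdot z'$, and taking the supremum over $|z'|=R$ gives
\[
\mathcal{B}_{x_0}=\{x:\ |x'|^2+x_N^2\le 2R|x'|\}.
\]
So at horizontal distance $\rho$ the admissible height is $\sqrt{2R\rho-\rho^2}\approx\sqrt{2R\rho}$, not $R-\sqrt{R^2-\rho^2}$. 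The containment only excludes a thin horn around $\Delta_0$.

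\textbf{Your quadratic bound is false.} The inequality $d(c)\le K|c-x_0|^2+o(d(c))$ fails in $\mathcal{O}_C$. Let $C$ be the unit disc and let $\partial\Omega$ be given near $x_0=(0,1)$ by the re-entrant corner $x_2=1+k|x_1|$. The inward normal ray from $(x_1,1+k|x_1|)$ has direction $(k\,\mathrm{sgn}\,x_1,-1)/\sqrt{1+k^2}$. It passes at distance $(k+(1+k^2)|x_1|)/\sqrt{1+k^2}<1$ from the origin, with the closest point ahead on the ray. So (C1)--(C4) hold locally, yet $d(c)\approx k|c-x_0|$.

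Even granting that inequality, rewriting it as $|c-x_0|\gtrsim d(c)^{1/2}$ is just the same quadratic bound restated. It does not produce the exponent $1/2$ in a H\"older estimate. The $1/2$ comes exactly from the $\sqrt{\rho}$ profile of the circular cusp.

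\textbf{Repairing the contact-point bound.} With the correct profile, your tilt bookkeeping does close. Apply the containment to the points $c+r\nu(c)$, $0<r<d(c)$, which lie in $\Omega$. For $C^2$ boundary use $c_N\ge -O(|c-x_0|^2)$, $\nu_N(c)\ge 1/2$ and $|\nu'(c)|=O(|c-x_0|)$. This gives $d(c)\le K|c-x_0|^{1/2}$ near each contact point.

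\textbf{The gluing step also has a gap.} A bound at contact points plus local Lipschitz continuity of $d$ on $\{d>0\}$ does not give a uniform $C^{0,1/2}$ modulus near the contact set $Z$. For $c,c'$ with $|c-c'|\ll\dist(c,Z)$ you need a Lipschitz constant at most of order $\dist(c,Z)^{-1/2}$ there, and nothing in your argument provides it. For instance, the one-dimensional profile $|s|^{1/2}(1+\frac12\sin|s|^{-2})$ satisfies both of your ingredients at $s=0$ but is not $1/2$-H\"older there. Inserting the nearest contact point only covers the regime $|c-c'|\gtrsim\dist(c,Z)$. The containment, being a statement at contact points only, cannot help in the other regime.

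\textbf{Where the missing control comes from.} It comes directly from (C4). The inward normal ray from $y=c+d(c)\nu(c)$ must hit $C\subset\{p:(p-c)\cdot\nu(c)\le0\}\cap\overline B(c,\diam C)$. This forces $\mathbf n(y)\cdot\nu(c)\ge d(c)/(\diam C+d(c))$. In the normal parametrization, $\mathbf n\cdot\nu=(1+|(I+dS)^{-1}\nabla_{\partial C}d|^2)^{-1/2}$, with $S$ the shape operator of $\partial C$. Hence $|\nabla_{\partial C}(d^2)|\le 2(1+d\,\|S\|)(\diam C+d)$ on $\{d>0\}$. Since $d^2$ vanishes on $Z$, $d^2$ is Lipschitz near $Z$. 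Then $|d(c)-d(c')|\le|d(c)^2-d(c')^2|^{1/2}$ yields the uniform $1/2$-H\"older estimate, which also contains the pointwise bound at contact points.
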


\begin{proof}
The containment in a circular cusp implies that near a contact point, the free boundary is trapped between two surfaces of the form $d(c)\sim K|c-c_0|^2$ (paraboloid) and $d(c)\sim k|c-c_0|$ (cone).
Standard barrier arguments for harmonic functions then yield the H\"older estimate.
See \cite[Section 5]{Barkatou2002} for details.
\end{proof}

\subsection{Regularity of the free boundary}

\begin{theorem}[Regularity of minimizers]\label{thm:regularity}
Let $\Omega_0$ be a minimizer of $J_{f,g}$ over $\mathcal{O}_C$.
Then:
\begin{enumerate}
\item $\partial\Omega_0\setminus C$ is a $C^{1,\alpha}$ hypersurface for some $\alpha\in(0,1)$.
\item The contact set $\partial\Omega_0\cap\partial C$ consists of Wiener-regular points, and near each contact point, the free boundary meets $\partial C$ tangentially with at least quadratic detachment.
\item The thickness function $d\in C^{1,\alpha}(\partial C)$ away from the contact set and satisfies the Euler-Lagrange equation $\frac{\delta J}{\delta d}=0$ in the sense of shape derivatives.
\end{enumerate}
\end{theorem}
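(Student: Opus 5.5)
The plan is to transfer the classical Alt--Caffarelli regularity theory, adapted to variable $g$ by Gustafsson and Shahgholian \cite{Gustafsson1996}, to minimizers constrained to $\mathcal{O}_C$. The key point is that the constraint is inactive away from $\partial C$.

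\emph{Step 1: local unconstrained minimality.} Fix $x_0\in\partial\Omega_0\setminus C$. By Proposition \ref{prop:param}, $\partial\Omega_0$ near $x_0$ is a Lipschitz radial graph $c\mapsto c+d(c)\nu(c)$ over $\partial C$. I will show that any perturbation $\Omega'$ agreeing with $\Omega_0$ outside a small ball $B(x_0,\rho)$, and whose boundary in the ball is a radial graph $d'$ with $\|d'-d\|_{W^{1,\infty}}$ small, stays in $\mathcal{O}_C$. Conditions (C1) and (C3) are automatic for radial graphs. Condition (C4) is an open condition, by Lemma \ref{lem:angle}: the inward normal makes an angle bounded away from $\pi/2$ with $-\nu(c)$, hence it still meets $C$ after a small Lipschitz perturbation. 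Consequently $u_{\Omega_0}$ minimizes $J_{f,g}$ among competitors $v$ with $v=u_{\Omega_0}$ outside $B(x_0,\rho)$ whose positivity sets are such radial graphs.

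\emph{Step 2: interior regularity.} From Step 1 I will derive the standard estimates at $x_0$ by comparison with harmonic replacements on balls (admissible after radial truncation): Lipschitz continuity of $u$ and nondegeneracy $\sup_{B_r(x)}u\ge cr$. Here I use $g\ge g_0>0$ near $\partial\Omega_0$, and assume $g$ is Hölder there as needed. These give the free boundary condition $|\nabla u|=g$ in the viscosity sense. Since $\partial\Omega_0\setminus C$ is already Lipschitz by (C2) and $u$ is monotone along rays by Theorem \ref{thm:radial_monotonicity}, flatness is immediate. De Silva's (or Caffarelli's) Lipschitz$\Rightarrow C^{1,\alpha}$ theorem for one-phase problems with Hölder $g$ then yields item 1. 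Item 3 follows because $d$ inherits $C^{1,\alpha}$ from the boundary via the implicit function theorem applied to $\Phi_d$. The Euler--Lagrange equation comes from the Hadamard formula $\frac{d}{d\epsilon}J(\Omega_\epsilon)=\int_{\partial\Omega_0}(g^2-|\nabla u|^2)\,V\cdot\mathbf{n}\,d\sigma$ with radial fields $V=\phi(c)\nu(c)$, which are admissible by Step 1.

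\emph{Step 3: contact points.} For $x_0\in\partial\Omega_0\cap\partial C$, Proposition \ref{prop:cusp_containment} traps $\Omega_0$ near $x_0$ inside the union of balls $\mathcal{B}_{x_0}$. The complement therefore contains a cone-like region (the exterior of a paraboloid), so the Wiener integral diverges and $x_0$ is Wiener regular. The quadratic detachment comes from Corollary \ref{cor:holder_thickness}: the upper bound $d(c)\le K|c-c_0|^2$ is the paraboloid trap. Tangency follows because $d\ge0$ attains its minimum $0$ at $c_0$ and is differentiable there in the limiting sense.

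I expect Step 3, and specifically proving tangency with genuine quadratic bounds, to be the main obstacle, since Corollary \ref{cor:holder_thickness} only gives Hölder-$1/2$ control. My first attempt would be a blow-up at $x_0$. Rescaled minimizers converge to a global minimizer constrained to a half-space $\{(y-x_0)\cdot\nu\le 0\}$ complement. The monotonicity of Theorem \ref{thm:radial_monotonicity} should force this blow-up to be one-dimensional, so its free boundary is flat and parallel to $H$, which gives tangency. A secondary difficulty is that Step 1's openness of (C4) relies on Lemma \ref{lem:angle}, which needs the strict convexity of Definition \ref{def:core}; I would state that hypothesis explicitly.
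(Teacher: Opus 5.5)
\paragraph{Comparison with the paper.} Your route differs from the paper's. The paper obtains $|\nabla u_0|=g$ on $\partial\Omega_0\setminus\partial C$ directly from the first variation, using normal fields of both signs. It then bootstraps via the implicit function theorem applied to $u_0(c+d(c)\nu(c))=0$ and Schauder estimates. You instead try to show the constraint is locally inactive and then import one-phase theory. That would be a reasonable plan, but the step that makes it work fails as argued.

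\paragraph{The gap in Steps 1 and 2.} Lemma \ref{lem:angle} only gives $\mathbf{n}(x)\cdot\nu(\pi(x))\ge\theta_0$. It does not say that the inward normal ray from $x$ meets $C$, let alone with a margin: for $x$ far from a small $C$, most directions within angle $\arccos\theta_0$ of $-\nu(c)$ miss $C$.

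Condition (C4) is a \emph{closed} first-order constraint on the normal. If at some $x\in\partial\Omega_0\setminus C$ the inward normal ray is tangent to $\partial C$, an arbitrarily small $W^{1,\infty}$ perturbation of $d$ tilts it off $C$. Nothing in your argument excludes that the minimizer saturates (C4) on part of its free boundary; for non-constant $g$ nothing forces (C4) to hold strictly. Likewise, outward variations are forbidden wherever $\partial\Omega_0$ touches $\partial D$. On such parts the admissible variations give only a one-sided inequality, not $|\nabla u|=g$.

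Even granting openness with a uniform margin, Step 1 only licenses $W^{1,\infty}$-small radial-graph competitors. The Lipschitz and nondegeneracy estimates of Step 2 need more than that:
\begin{itemize}
\item Harmonic replacement on $B_r(x_0)$ has positivity set $\Omega_0\cup B_r(x_0)$. This can violate (C3), since a ray may leave $\Omega_0$ and then re-enter $B_r$ near its rim. It can also violate (C4), since near the rim the inward normal of $\partial B_r$ points along a chord of $\partial\Omega_0$, i.e.\ sideways.
\item Removing a ball for nondegeneracy creates a hole, which violates (C3).
\item Taking radial hulls or shadows instead (``radial truncation'') produces boundary pieces lying on the rays themselves. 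There $\mathbf{n}\cdot\nu=0$, and the inward normal stays in the hyperplane $\{(y-c)\cdot\nu(c)=s\}$ with $s>0$, which is disjoint from $C$, so (C4) fails.
\end{itemize}
Hence the two-sided viscosity condition that Caffarelli's theory needs is not established. Secondarily, a Lipschitz graph is not $\epsilon$-flat, so ``flatness is immediate'' is wrong; you would need Caffarelli's Lipschitz$\Rightarrow C^{1,\alpha}$ theorem rather than De Silva's flatness result. Note that the paper's own Step 2 takes two-sided variations for granted, so this is exactly the point that needed a real argument.

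\paragraph{Step 3.} Step 3 misreads Proposition \ref{prop:cusp_containment}. Use coordinates centred at $x_0$ with $y_N$ along $\Delta_0$. The inclusion $\Omega\cap B(x_0,\epsilon_0)\cap H^+\subset\mathcal{B}_{x_0}$ means $\Omega$ lies in $\{y_N\lesssim\sqrt{2R|y'|}\}$, i.e.\ $d(c)\lesssim|c-c_0|^{1/2}$. This is where the exponent $1/2$ in Corollary \ref{cor:holder_thickness} comes from; it does not give $d(c)\le K|c-c_0|^2$.

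Correspondingly, the complement of $\Omega$ is only guaranteed to contain the thin horn $\{|y'|\lesssim y_N^2/(2R)\}$ around the normal ray. It is not guaranteed to contain a cone or the exterior of a paraboloid. Wiener regularity therefore requires an actual capacity count for that horn. The Wiener series diverges for $N=2,3$, but for $N\ge4$ the vertex of such a horn is Wiener-irregular.

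Finally, the blow-up is only sketched (``should force''). Even a flat, one-dimensional blow-up would yield only tangency, $d(c)=o(|c-c_0|)$. Quadratic detachment needs a $C^{1,1}$-type bound at contact points, and nothing in your proposal supplies one.
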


\begin{proof}
The proof proceeds through a structured sequence of steps.

\emph{Step 1: Compactness and convergence.}
From Proposition \ref{prop:compact}, any minimizing sequence $\{\Omega_n\}\subset\mathcal{O}_C$ admits a subsequence converging in Hausdorff distance to a minimizer $\Omega_0\in\mathcal{O}_C$.
The uniform Lipschitz property of $\partial\Omega_n\setminus C$ provides the initial regularity framework away from the contact set.

\emph{Step 2: Optimality condition on the free boundary.}
The shape derivative computation (Proposition \ref{prop:optimality}) shows that on $\partial\Omega_0\setminus\partial C$, the first variation of $J_{f,g}$ vanishes for all admissible normal perturbations.
Since admissible variations can be taken with arbitrary sign on the free part, we obtain the pointwise condition $|\nabla u_0|=g$ on $\partial\Omega_0\setminus C$.
Because $g>0$ is at least $L^2$, a standard bootstrap argument in elliptic regularity (using that $u_0$ is harmonic, hence real-analytic in the interior, and that $|\nabla u_0|$ is a $C^{0,\alpha}$ function on a Lipschitz hypersurface) lifts this equality to $C^{1,\alpha}$ regularity of the free boundary.
Precisely, the implicit function theorem applied to the equation $u_0(c+d(c)\nu(c))=0$ yields
\[
\nabla u_0\cdot\nu(c)\,\nabla_{\partial C}d(c) + \nabla_{\partial C}u_0 = 0,
\]
and since $\nabla u_0\cdot\nu(c) = -|\nabla u_0| = -g<0$, we can solve for $\nabla d$ in terms of $C^{0,\alpha}$ quantities.
Schauder estimates then give $d\in C^{1,\alpha}$.

\emph{Step 3: Behavior near contact points.}
Near a contact point $x_0\in\partial\Omega_0\cap\partial C$, Proposition \ref{prop:cusp_containment} provides geometric containment of the free boundary within a circular cusp.
Together with the Wiener criterion, this guarantees that every boundary point is regular for the Dirichlet problem.
An asymptotic expansion of $u_0$ near $x_0$, using the fact that $u_0=0$ on both $\partial\Omega_0$ and (in a weak sense) on $\partial C$ at the contact point, shows that $\nabla u_0(x_0)=0$.
A further expansion yields $|\nabla u_0(x)|\sim\operatorname{dist}(x,\partial C)$, which combined with the geometric containment forces quadratic detachment.

\emph{Step 4: Regularity of the thickness function.}
Away from the contact set, the free boundary is parametrized by $x=c+d(c)\nu(c)$.
The implicit function theorem applied to the equation $F(c,d)=u_0(c+d\nu(c))=0$ is valid because $\frac{\partial F}{\partial d}=\nabla u_0\cdot\nu=|\nabla u_0|=g>0$ by Step 2.
This yields $d\in C^{1,\alpha}$ and the Euler-Lagrange equation follows from the shape derivative identity.

See also \cite[Theorem 1.4]{Barkatou2005} for a related regularity result in a similar geometric context.
\end{proof}

\begin{remark}
Theorem \ref{thm:regularity} justifies the use of shape derivative calculus even when the free boundary meets the fixed convex set.
This is essential for the variational characterization of solutions.
\end{remark}

\section{Inheritance of the $C$-GNP property by level sets}
\label{sec:inheritance}

A fundamental geometric result, which to the best of our knowledge is new, is that the level sets of the solution to the Dirichlet problem inherit the $C$-GNP property from the original domain.
This theorem is crucial for the variational approach developed in Sections \ref{sec:QS} and \ref{sec:bi}: it guarantees that the minimizing sequences, as well as the foliation generated by the state function, naturally remain within the admissible class $\mathcal{O}_C$.

Let $u$ be the solution of the Dirichlet problem \eqref{eq:QS} with source term $f$ satisfying $f\ge0$ and $\supp f\subset C$.
For each $t\in(0,\max_{\overline{\Omega}}u)$, define the level set
\[
\Omega_t := \{x\in\Omega : u(x) > t\}.
\]

\begin{theorem}[Inheritance of the $C$-GNP property]\label{thm:inheritance}
Assume that $C$ is a strictly convex core.
For almost every $t\in(0,\max_{\overline{\Omega}}u)$ (with respect to the Lebesgue measure on $\mathbb{R}$), the level set $\Omega_t$ belongs to $\mathcal{O}_C$.

Moreover:
\begin{enumerate}
\item $\Int(C)\subset\Omega_t$;
\item $\partial\Omega_t$ is a $C^{1,\alpha}$ hypersurface;
\item For every $c\in\partial C$, the outward normal ray $\Delta_c$ intersects $\Omega_t$ in a connected interval;
\item For almost every $x\in\partial\Omega_t$, the inward normal ray intersects $C$.
\end{enumerate}

Furthermore, there exists a thickness function $d_t:\partial C\to(0,\infty)$ for $\Omega_t$, satisfying $u(c+d_t(c)\nu(c))=t$, and the map $t\mapsto d_t$ is differentiable with
\begin{equation}\label{eq:evolution}
\frac{\partial d_t(c)}{\partial t} = \frac{1}{\nabla u(c+d_t(c)\nu(c))\cdot\nu(c)} < 0.
\end{equation}
\end{theorem}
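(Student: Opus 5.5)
The plan is to build everything on the radial monotonicity of $u$ (Theorem \ref{thm:radial_monotonicity}) together with the bi-Lipschitz radial parametrization of Proposition \ref{prop:param}, and then treat the regularity and normal-ray properties by a Sard-type argument.

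\emph{Step 1 (interior of $C$, thickness $d_t$, connectedness).} Since $u>0$ and $u$ is continuous on $\overline\Omega$, we first restrict $t$ to $(0,\min_{C}u)$, so that $\Int(C)\subset C\subset\Omega_t$, which gives (C1). Larger $t$ would require a separate convention, and this is a point where the statement as written needs a caveat. Next, fix $c\in\partial C$. By Theorem \ref{thm:radial_monotonicity}, $w_c(r)=u(c+r\nu(c))$ is strictly decreasing and continuous on $[0,d(c)]$, with $w_c(d(c))=0$ and $w_c(0)=u(c)>t$. Hence there is a unique $d_t(c)\in(0,d(c))$ with $w_c(d_t(c))=t$, and
\[
\Delta_c\cap\Omega_t=\{c+r\nu(c):0<r<d_t(c)\}.
\]
This interval is connected, which gives (C3). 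The same description yields the representation of $\Omega_t\setminus C$ as in Proposition \ref{prop:param}.

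\emph{Step 2 (regularity and the evolution law).} We apply the implicit function theorem to
\[
F(c,r,t)=u(c+r\nu(c))-t
\]
on $\partial C\times(0,d(c))\times\mathbb{R}$. Here $u$ is real-analytic in $\Omega\setminus C$, $\partial C$ is $C^2$, and
\[
\partial_rF=\nabla u\cdot\nu(c)<0
\]
by Step 2 of the proof of Theorem \ref{thm:radial_monotonicity}. It follows that $(c,t)\mapsto d_t(c)$ is $C^1$, and in fact $C^{1,\alpha}$ in $c$ because $\nu\in C^1$. Differentiating $u(c+d_t(c)\nu(c))=t$ in $t$ gives \eqref{eq:evolution}.

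For the regularity claim, observe that $\partial\Omega_t=u^{-1}(t)$ away from $C$. By Sard's theorem applied to the smooth function $u$ on $\Omega\setminus\overline C$, almost every $t$ is a regular value, so $\partial\Omega_t$ is a smooth, hence $C^{1,\alpha}$, hypersurface. Indeed, monotonicity already gives $\nabla u\neq0$ on the level set, so the Sard step is only a safety net; the result is (C2) and item 2.

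\emph{Step 3 (normal rays, the main obstacle).} At $x=c+d_t(c)\nu(c)\in\partial\Omega_t$, the inward normal is $\nabla u(x)/|\nabla u(x)|$. We must show that the ray $x+s\nabla u(x)$, $s>0$, meets $C$. The plan is to follow the gradient flow of $u$ rather than straight lines: $u$ increases along the flow, so flow lines starting at $\partial\Omega_t$ stay in $\Omega_t$ and reach $C$. However, the straight normal ray is what (C4) requires.

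To handle this, I would first prove an angle bound analogous to Lemma \ref{lem:angle} for $\Omega_t$, namely $\mathbf n_t(x)\cdot\nu(c)\ge\theta_t>0$. This follows from $\nabla u\cdot\nu<0$ and compactness of the level set for a regular value $t$. Then I would argue by the supporting-hyperplane argument from the proof of Lemma \ref{lem:angle}, combined with strict convexity of $C$, that the inward normal cone is aimed at $C$.

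I expect this to be genuinely delicate. A positive angle alone does not force the normal line to hit $C$ when $d_t(c)$ is large. So the fallback is to prove (C4) for almost every $x$ via a continuity argument in $t$. Near $t\uparrow\min_C u$ the level sets hug $\partial C$, and normals there are close to $\nu$, so (C4) holds. One would then need to propagate this property down in $t$ using the $C$-GNP of $\Omega$ itself at $t=0$, interpolating through the foliation. This interpolation step is the likely weak point.
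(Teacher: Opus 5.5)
Your handling of (C1), (C3), (C2) and the evolution law is essentially sound and follows the paper (Lemmas~\ref{lem:C1}--\ref{lem:C3} and the implicit-function computation). Restricting to $t<\min_C u$ is necessary, not cosmetic. For every $t\in(\min_C u,\max_{\overline\Omega}u)$, continuity of $u$ gives a nonempty open subset of $\Int(C)$ on which $u<t$. So (C1) fails on a whole interval of $t$, and the appeal to Sard in Lemma~\ref{lem:C1} cannot repair this. Your remark that radial monotonicity already makes every $t<\min_C u$ a regular value also makes the Sard/Brothers--Ziemer step unnecessary.

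\textbf{The genuine gap is (C4), which you do not prove.} The bound $\mathbf n_t(x)\cdot\nu(c)\ge\theta_t>0$ is correct, but as you note it does not force the normal ray to meet $C$. Your fallback breaks down at both ends.
\begin{itemize}
\item \emph{The base case is false in general.} As $t\uparrow m:=\min_C u$, $d_t(c)$ decreases to the root $d_m(c)$ of $u(c+r\nu(c))=m$, which is strictly positive wherever $u(c)>m$. So the level sets approach $\partial C$ only at the minimum points of $u|_{\partial C}$. They hug $\partial C$ with normals close to $\nu$ only when $u$ is constant on $\partial C$.
\item \emph{Nothing propagates the property in $t$.} ``Every inward normal ray meets $C$'' survives limits $t_k\to t$ because $C$ is compact, but it is not stable under small perturbations. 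Its open strengthening (``meets $\Int(C)$'') is not closed. So a connectedness argument in $t$ has nothing to run on. At $t=0$ you also only have (C4) almost everywhere on a Lipschitz boundary.
\end{itemize}

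The paper proves (C4) by the gradient-flow route you set aside (Lemma~\ref{lem:C4}). Its Step~6 passes from ``the flow line of $\nabla u$ reaches $C$'' to ``the straight inward normal ray reaches $C$'' without justification. That is exactly the non sequitur you identified, so the paper does not supply the missing idea either.

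What is missing is a mechanism tying the direction of $\nabla u(x)$ to the position of $C$. A reflection comparison provides one:
\begin{itemize}
\item Suppose the ray $\{x+s\nabla u(x):s\ge0\}$ missed $C$. Strict separation then gives a hyperplane $T$ through $x$, disjoint from $C$, with $\nabla u(x)\cdot e\ge0$. Here $e$ is the unit normal of $T$ pointing into the open side $H$ away from $C$.
\item Suppose also that $\Omega$ contained the mirror image $R_T(\Omega\cap H)$ of its cap. Then $w(y)=u(R_Ty)-u(y)$ satisfies $-\Delta w=f(R_Ty)\ge0$ on the cap and $w\ge0$ on its boundary.
\item Hopf's lemma at $x\in T\cap\Omega$ (barring the degenerate case $w\equiv0$ near $x$) would then give $\partial_e u(x)<0$, a contradiction. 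The cap property is moreover inherited by every $\Omega_t$.
\end{itemize}
But this reflection property is not among (C1)--(C4), and it does not obviously follow from (C4) for $\Omega$. A moving-plane sweep driven by (C4) alone can stall at an interior tangency, where (C4) yields no contradiction. So (C4) for the level sets may well need a stronger hypothesis than the theorem assumes.
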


The complete proof is given in Appendix \ref{app:A}.

\section{The quadrature surfaces problem $\mathcal{QS}(f,g)$}
\label{sec:QS}

\subsection{Variational formulation}

Fix a continuous, strictly positive extension $\tilde g$ of the prescribed boundary data $g$ to $\mathbb{R}^N$ (the values of $\tilde g$ on the optimal free boundary will coincide with the originally prescribed $g$, so the extension is merely a technical device).
Define the functional
\[
J_{f,g}(\Omega) = \int_{\Omega}(|\nabla u_\Omega|^2 - 2f u_\Omega)\,dx + \int_{\Omega}\tilde g^2\,dx,
\]
where $u_\Omega$ solves the Dirichlet problem
\[
\left\{\begin{array}{ll}
-\Delta u_\Omega = f & \text{in } \Omega,\\
u_\Omega = 0 & \text{on } \partial\Omega.
\end{array}\right.
\]

\begin{proposition}[Properties of $J_{f,g}$]\label{prop:J_properties}
\begin{enumerate}
\item $J_{f,g}$ is well-defined on $\mathcal{O}_C$.
\item For any $\Omega\in\mathcal{O}_C$, we have the identity
\[
J_{f,g}(\Omega) = \int_{\Omega}(\tilde g^2 - |\nabla u_\Omega|^2)\,dx.
\]
\item $J_{f,g}$ is lower semicontinuous with respect to Hausdorff convergence in $\mathcal{O}_C$.
\end{enumerate}
\end{proposition}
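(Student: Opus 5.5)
The three items of Proposition~\ref{prop:J_properties} need three different tools: the geometry of $\mathcal{O}_C$ for (1), an energy identity for (2), and Propositions~\ref{prop:compact} and \ref{prop:continuity} for (3).

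\emph{Item (1).} Every $\Omega\in\mathcal{O}_C$ is open and contained in the bounded set $D$. Hence $H_0^1(\Omega)$, extended by zero to $D$, is a closed subspace of $H_0^1(D)$, and the Poincaré inequality holds on it with the constant of $D$. Since $f\in L^2$, the Lax--Milgram theorem gives a unique weak solution $u_\Omega\in H_0^1(\Omega)$. It satisfies $\|\nabla u_\Omega\|_{L^2}\le C_D\|f\|_{L^2}$ with $C_D$ independent of $\Omega$. Finally, $\tilde g$ is continuous on $\overline D$, which is compact, so $\int_\Omega\tilde g^2\le |D|\max_{\overline D}\tilde g^2<\infty$. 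Thus $J_{f,g}(\Omega)$ is finite, and it is bounded uniformly over $\mathcal{O}_C$.

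\emph{Item (2).} I test the weak formulation $\int_\Omega\nabla u_\Omega\cdot\nabla\varphi=\int_\Omega f\varphi$ with $\varphi=u_\Omega$, which is admissible because $u_\Omega\in H_0^1(\Omega)$. This gives $\int_\Omega f u_\Omega=\int_\Omega|\nabla u_\Omega|^2$. Substituting into $J_{f,g}$ gives $\int_\Omega(|\nabla u_\Omega|^2-2|\nabla u_\Omega|^2+\tilde g^2)=\int_\Omega(\tilde g^2-|\nabla u_\Omega|^2)$. No boundary regularity is needed, because only the weak formulation is used.

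\emph{Item (3).} Let $\Omega_n\to\Omega$ in the Hausdorff sense with all $\Omega_n,\Omega\in\mathcal{O}_C$. I split $J_{f,g}(\Omega_n)$ into its two terms and treat them separately.

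For the volume term, Proposition~\ref{prop:compact} says Hausdorff convergence in $\mathcal{O}_C$ is equivalent to characteristic convergence, so $\chi_{\Omega_n}\to\chi_\Omega$ in $L^1(D)$. Since $\tilde g^2$ is bounded on $D$, it follows that $\int_{\Omega_n}\tilde g^2=\int_D\chi_{\Omega_n}\tilde g^2\to\int_\Omega\tilde g^2$.

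For the Dirichlet term, Proposition~\ref{prop:continuity} gives $u_{\Omega_n}\to u_\Omega$ strongly in $H_0^1(D)$, with all functions extended by zero. Hence $\int_D|\nabla u_{\Omega_n}|^2\to\int_D|\nabla u_\Omega|^2$. The extended functions have zero gradient outside their domains, so these integrals equal the integrals over $\Omega_n$ and $\Omega$.

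Combining the two limits and using item (2), $J_{f,g}(\Omega_n)\to J_{f,g}(\Omega)$. So $J_{f,g}$ is in fact continuous, which is stronger than lower semicontinuity.

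The only real difficulty is the strong $H^1$ convergence of the state functions near the contact points on $\partial C$, where cusps can appear. This is exactly what Proposition~\ref{prop:continuity} supplies, via Wiener regularity and Keldysh stability, so here it can be invoked directly. If only weak convergence were available, weak lower semicontinuity of the Dirichlet norm would point the wrong way for the term $-\int|\nabla u|^2$. One would then fall back on the representation $-\int|\nabla u_\Omega|^2=\min_{v\in H_0^1(\Omega)}\int(|\nabla v|^2-2fv)$ and on $\gamma$-convergence, which again rests on the regularity furnished by the $C$-GNP class.
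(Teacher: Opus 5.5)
Your proposal is correct and follows essentially the same route as the paper: (1) finiteness from standard elliptic theory, (2) the energy identity obtained by testing the equation with $u_\Omega$, and (3) convergence of the Dirichlet energy from the strong $H_0^1(D)$ convergence in Proposition~\ref{prop:continuity}. Your version is slightly more complete in two ways: the weak formulation in (2) needs no boundary regularity, and in (3) you explicitly treat the volume term $\int_{\Omega_n}\tilde g^2$ via the characteristic convergence of Proposition~\ref{prop:compact}, which the paper's proof leaves implicit, so you obtain continuity rather than mere lower semicontinuity.
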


\begin{proof}
(1) By standard elliptic theory, $u_\Omega\in H_0^1(\Omega)$ and $|\nabla u_\Omega|\in L^2(\partial\Omega)$ (since $\partial\Omega$ is Lipschitz away from $C$).
Hence all integrals are finite.

(2) Multiply the equation $-\Delta u_\Omega = f$ by $u_\Omega$ and integrate over $\Omega$.
Applying Green's first identity:
\[
\int_{\Omega}|\nabla u_\Omega|^2\,dx - \int_{\partial\Omega}u_\Omega\frac{\partial u_\Omega}{\partial\nu}\,d\sigma = \int_{\Omega}f u_\Omega\,dx.
\]
Since $u_\Omega=0$ on $\partial\Omega$, the boundary term vanishes, yielding the energy equality
\[
\int_{\Omega}|\nabla u_\Omega|^2\,dx = \int_{\Omega}f u_\Omega\,dx.
\]
Now substitute this into the definition of $J_{f,g}$:
\[
J_{f,g}(\Omega) = \int_{\Omega}(|\nabla u_\Omega|^2 - 2f u_\Omega)\,dx + \int_{\Omega}\tilde g^2\,dx
= \int_{\Omega}f u_\Omega\,dx - 2\int_{\Omega}f u_\Omega\,dx + \int_{\Omega}\tilde g^2\,dx
= \int_{\Omega}\tilde g^2\,dx - \int_{\Omega}f u_\Omega\,dx.
\]

(3) Lower semicontinuity follows from Proposition \ref{prop:continuity} and the fact that $u_{\Omega_n}\to u_\Omega$ strongly in $H_0^1(D)$ implies convergence of the energy integrals.
\end{proof}

\subsection{Existence of minimizers}

\begin{proposition}[Existence of minimizers]\label{prop:existence_minimizer}
The functional $J_{f,g}$ admits a minimizer $\Omega_0\in\mathcal{O}_C$.
\end{proposition}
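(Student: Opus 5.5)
The plan is the direct method of the calculus of variations, using the compactness of $\mathcal{O}_C$ (Proposition \ref{prop:compact}) and the continuity of state functions (Proposition \ref{prop:continuity}).

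\emph{Step 1: $J_{f,g}$ is bounded below on $\mathcal{O}_C$.} By Proposition \ref{prop:J_properties}(2),
\[
J_{f,g}(\Omega)=\int_\Omega \tilde g^2\,dx-\int_\Omega f u_\Omega\,dx \ge -\int_D f u_\Omega\,dx .
\]
Every admissible $\Omega$ lies in $D$, so $0\le u_\Omega\le u_D$ by the maximum principle, with $u_D$ the solution of the Dirichlet problem on $D$. Hence
\[
J_{f,g}(\Omega)\ge -\int_D f u_D\,dx>-\infty .
\]
The infimum $m=\inf_{\mathcal{O}_C}J_{f,g}$ is therefore finite. The class is nonempty, since it contains for instance a small parallel neighbourhood $\{x:\operatorname{dist}(x,C)<\varepsilon\}\subset D$.

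\emph{Step 2: Extract a limit of a minimizing sequence.} Take a minimizing sequence $\Omega_n\in\mathcal{O}_C$ with $J_{f,g}(\Omega_n)\to m$. By Proposition \ref{prop:compact}, a subsequence converges in the Hausdorff, compact and characteristic senses to some $\Omega_0\in\mathcal{O}_C$. Here the closedness of the class under these convergences is exactly what makes the limit admissible.

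\emph{Step 3: Pass to the limit in each term.} The two terms of $J_{f,g}$ are handled separately.

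For the volume term, characteristic convergence gives $\chi_{\Omega_n}\to\chi_{\Omega_0}$ in $L^1(D)$. Since $\tilde g$ is continuous and $D$ is bounded, $\tilde g^2\in L^\infty(D)$. It follows that $\int_{\Omega_n}\tilde g^2\to\int_{\Omega_0}\tilde g^2$.

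For the energy term, Proposition \ref{prop:continuity} gives $u_{\Omega_n}\to u_{\Omega_0}$ strongly in $H_0^1(D)$, hence in $L^2(D)$. Since $f\in L^2$, this yields $\int f u_{\Omega_n}\to\int f u_{\Omega_0}$.

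Consequently $J_{f,g}(\Omega_0)=\lim J_{f,g}(\Omega_n)=m$, which is in fact continuity rather than mere lower semicontinuity (cf. Proposition \ref{prop:J_properties}(3)). So $\Omega_0$ is a minimizer.

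\emph{Main obstacle.} The delicate point is Step 3's reliance on Proposition \ref{prop:continuity}: the strong $H_0^1$ convergence of state functions requires stability of the Dirichlet problem in the sense of Keldysh. This in turn depends on the Wiener regularity of boundary points, including the cusp points on $\partial C$ controlled by Proposition \ref{prop:cusp_containment}. I would simply invoke Proposition \ref{prop:continuity} as stated.

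A secondary check is that $f u_\Omega$ is well-defined when $f$ is only in $L^2$ and not $L^\infty$. This is fine because $u_\Omega\in H_0^1(D)\subset L^2(D)$.
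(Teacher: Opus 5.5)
Your proof is correct and follows the same route as the paper: the direct method, combining the compactness of $\mathcal{O}_C$ (Proposition \ref{prop:compact}) with (semi)continuity of $J_{f,g}$ obtained from the strong $H_0^1$ convergence of Proposition \ref{prop:continuity}. You are more explicit than the paper. You check the lower bound, the nonemptiness of $\mathcal{O}_C$, and the convergence of the volume term $\int_{\Omega_n}\tilde g^2$ via characteristic convergence, which the paper's Proposition \ref{prop:J_properties}(3) leaves implicit, but the argument is the same.
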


\begin{proof}
By Proposition \ref{prop:compact}, $\mathcal{O}_C$ is compact for Hausdorff convergence.
The functional $J_{f,g}$ is lower semicontinuous with respect to this convergence by Proposition \ref{prop:J_properties}(3).
Hence a minimizer exists by the direct method of the calculus of variations.
\end{proof}

\subsection{Optimality conditions}

\begin{proposition}[Optimality conditions]\label{prop:optimality}
Let $\Omega_0\in\mathcal{O}_C$ be a minimizer of $J_{f,g}$.
If $\Omega_0$ is of class $C^2$ and $u_0$ solves $P(\Omega_0,f)$, then
\[
|\nabla u_0| \le \tilde g \quad\text{on } \partial\Omega_0\cap\partial C,\qquad
|\nabla u_0| = \tilde g \quad\text{on } \partial\Omega_0\setminus\partial C.
\]
On the free boundary $\partial\Omega_0\setminus\partial C$, the function $\tilde g$ coincides with the prescribed boundary data $g$.
\end{proposition}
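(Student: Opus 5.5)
The plan is to compute the first shape derivative of $J_{f,g}$ at $\Omega_0$ along normal perturbations $\Omega_\varepsilon=(I+\varepsilon V)(\Omega_0)$, and then exploit the fact that only certain signs of $V\cdot\nu$ keep $\Omega_\varepsilon$ admissible. By Proposition \ref{prop:J_properties}(2), $J_{f,g}(\Omega)=\int_\Omega\tilde g^2\,dx-\int_\Omega f u_\Omega\,dx$.

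\textbf{Step 1: the Hadamard formula.} Since $\Omega_0$ is $C^2$, the shape derivative $u'$ of $u_\Omega$ in direction $V$ satisfies
\[
\Delta u'=0 \text{ in }\Omega_0,\qquad u'=-\frac{\partial u_0}{\partial\nu}\,V\cdot\nu \text{ on }\partial\Omega_0 .
\]
Because $f u_\Omega$ vanishes on $\partial\Omega_0$,
\[
\frac{d}{d\varepsilon}\Big|_{0}\int_{\Omega_\varepsilon}f u_{\Omega_\varepsilon}\,dx=\int_{\Omega_0}f u'\,dx .
\]
Green's formula with $-\Delta u_0=f$ gives
\[
\int_{\Omega_0}f u'\,dx=-\int_{\partial\Omega_0}u'\,\partial_\nu u_0\,d\sigma=\int_{\partial\Omega_0}|\nabla u_0|^2\,V\cdot\nu\,d\sigma ,
\]
where the last equality uses $\nabla u_0=(\partial_\nu u_0)\nu$ on $\partial\Omega_0$, since $u_0=0$ there. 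Hence
\[
dJ_{f,g}(\Omega_0;V)=\int_{\partial\Omega_0}\bigl(\tilde g^2-|\nabla u_0|^2\bigr)\,V\cdot\nu\,d\sigma .
\]
The computation is routine and can be deferred to the shape-derivative appendix.

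\textbf{Step 2: admissible directions.} We need to know which fields $V$ keep $\Omega_\varepsilon\in\mathcal{O}_C$ for small $\varepsilon$ of a given sign.

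On the free part $\partial\Omega_0\setminus\partial C$, take $V$ supported in a small neighborhood of a point $x$ there and directed along $\nu(\pi(x))$. This amounts to perturbing the thickness function, $d\mapsto d+\varepsilon\varphi$ with $\varphi$ of either sign. By Lemma \ref{lem:angle} the perturbed graph $c\mapsto c+(d+\varepsilon\varphi)\nu(c)$ still has normals making angle at least $\theta_0/2$ with $\nu(c)$, so (C3) and (C4) survive, and (C1), (C2) are clear. Two-sided variations are therefore allowed. Minimality gives $dJ=0$ for all such $\varphi$, and since $V\cdot\nu$ is a positive multiple of $\varphi$, we get $|\nabla u_0|=\tilde g$ pointwise on $\partial\Omega_0\setminus\partial C$.

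At contact points $\partial\Omega_0\cap\partial C$, condition (C1) forbids shrinking, so only outward variations $V\cdot\nu\ge0$ are admissible. Minimality gives $\int(\tilde g^2-|\nabla u_0|^2)\varphi\ge0$ for all $\varphi\ge0$ supported there, hence $|\nabla u_0|\le\tilde g$.

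\textbf{Step 3: identification with $g$.} That $\tilde g=g$ on the free boundary follows from the construction of the extension: $\tilde g$ is a continuous extension of $g$, so they agree wherever $g$ is prescribed.

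\textbf{Main obstacle.} The hardest point is Step 2 at points where $\partial\Omega_0\setminus\partial C$ approaches the contact set. There $d\to0$, and one has to make sure that one-sided outward variations localized near $\partial C$ stay admissible. I would first try the outward normal graph perturbation $d+\varepsilon\varphi$ with $\varphi\ge0$, and use Proposition \ref{prop:cusp_containment} to control the geometry in the cusp region.

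A further worry is that the contact set may have zero $\mathcal{H}^{N-1}$ measure. In that case the inequality $|\nabla u_0|\le\tilde g$ there should be read as holding $\mathcal{H}^{N-1}$-a.e., or pointwise by continuity of $\nabla u_0$ up to the $C^2$ boundary.
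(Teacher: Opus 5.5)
Your proposal follows the paper's proof. Both use the Hadamard formula $dJ_{f,g}(\Omega_0;V)=\int_{\partial\Omega_0}(\tilde g^2-|\nabla u_0|^2)\,V\cdot\nu\,d\sigma$, then analyse the sign of admissible $V\cdot\nu$: both signs on the free part, only $V\cdot\nu\ge0$ at contact points. Your Step 1 derivation is correct and supplies what the paper only cites (Appendix \ref{app:B}). Your continuity remark for a contact set of zero $\mathcal{H}^{N-1}$-measure is a sound refinement the paper omits. At contact points in the closure of the free part the inequality holds as an equality by continuity, and at relative interior points of the contact set your localization works.

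The step that fails as written is the admissibility claim in Step 2. A lower bound $\mathbf{n}\cdot\nu(\pi(x))\ge\theta_0/2$ on the perturbed normals does not imply (C4). Take $C$ the unit disk and a boundary point $x$ with $|x|=10$ whose normal is tilted by $\pi/4$ from the radial direction. Then $\mathbf{n}\cdot\nu(\pi(x))=\cos(\pi/4)$, yet the inward normal ray passes at distance $10\sin(\pi/4)>1$ from the origin and misses $C$.

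Condition (C4) is closed rather than open. Your bump argument is fine near free-boundary points whose inward normal ray meets $\Int(C)$. Thanks to the angle bound this holds automatically at points of small thickness, so the cusp region you single out as the main obstacle is actually the easy part. The trouble is at points where the inward ray only grazes $\partial C$, for instance along an arc of an involute of $\partial C$, whose normals are tangent to $\partial C$. There a localized change $d\mapsto d+\varepsilon\varphi$ of either sign tilts the normals the wrong way on one flank of the bump and violates (C4).

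The paper simply asserts that $V\cdot\nu$ ``can take both signs arbitrarily'', so you inherit this gap rather than introduce it. Still, Lemma \ref{lem:angle} does not close it. As written, your argument gives $|\nabla u_0|=\tilde g$ only on the relatively open part of the free boundary where (C4) holds strictly. Similarly, neither argument treats points of $\partial\Omega_0\cap\partial D$. There only inward variations are admissible, and one obtains only $|\nabla u_0|\ge\tilde g$.
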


\begin{proof}
For a $C^2$ deformation field $V$, the shape derivative of $J_{f,g}$ at $\Omega_0$ is (see Appendix \ref{app:B})
\[
dJ_{f,g}(\Omega_0;V) = \int_{\partial\Omega_0}(\tilde g^2 - |\nabla u_0|^2)V\cdot\nu\,d\sigma.
\]
Since $\Omega_0$ is a minimizer, we must have $dJ_{f,g}(\Omega_0;V)\ge0$ for all admissible variations that keep the perturbed domain in $\mathcal{O}_C$.
On $\partial\Omega_0\setminus\partial C$, the normal component $V\cdot\nu$ can take both signs arbitrarily (by choosing inward or outward variations that do not violate the containment of $C$), forcing $\tilde g^2 - |\nabla u_0|^2 = 0$.
On $\partial\Omega_0\cap\partial C$, admissible variations are only outward (since $\Omega_0$ must remain containing $\Int(C)$), hence $V\cdot\nu\ge0$.
The inequality $dJ\ge0$ then yields $\tilde g^2 - |\nabla u_0|^2 \ge 0$, i.e., $|\nabla u_0|\le\tilde g$.

Because $\tilde g$ is an extension of $g$, on the free part of the boundary we automatically have $\tilde g = g$.
\end{proof}

\subsection{Lemma: minimization under containment constraint}

\begin{lemma}[Minimization under containment]\label{lem:containment}
Let $\Omega_0\in\mathcal{O}_C$ be fixed.
The class
\[
\mathcal{O}_C(\Omega_0) := \{\Omega\in\mathcal{O}_C : \Omega\subset\Omega_0\}
\]
is compact for Hausdorff convergence.
Moreover, $J_{f,g}$ attains its minimum on $\mathcal{O}_C(\Omega_0)$.
\end{lemma}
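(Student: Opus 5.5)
The argument runs through the direct method, using the compactness already established for the full class. We show that $\mathcal{O}_C(\Omega_0)$ is a closed subset of $\mathcal{O}_C$ for Hausdorff convergence. Proposition \ref{prop:compact} then gives sequential compactness, and the lower semicontinuity of Proposition \ref{prop:J_properties}(3) gives existence of a minimizer.

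\emph{Step 1 (closedness of the constraint).} Take a sequence $\Omega_n\in\mathcal{O}_C(\Omega_0)$. By Proposition \ref{prop:compact}, a subsequence converges in the Hausdorff, compact and characteristic senses to some $\Omega\in\mathcal{O}_C$. We must prove $\Omega\subset\Omega_0$. The most direct route uses compact convergence. Every compact $K\subset\Omega$ satisfies $K\subset\Omega_n$ for $n$ large, hence $K\subset\Omega_0$. Since $\Omega$ is open, it is the union of its compact subsets, and so $\Omega\subset\Omega_0$.

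A second route uses the thickness functions of Proposition \ref{prop:param}. The inclusion $\Omega_n\subset\Omega_0$ is equivalent to $d_n\le d_0$ pointwise on $\partial C$, because both sets are star-shaped along the normal rays $\Delta_c$ by (C3). Hausdorff convergence in $\mathcal{O}_C$ yields $d_n\to d$ pointwise, and the inequality then passes to the limit. Either route shows that the limit stays in $\mathcal{O}_C(\Omega_0)$.

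\emph{Step 2 (nonemptiness and direct method).} The class is nonempty, since it contains $\Omega_0$ itself. Set $m=\inf_{\mathcal{O}_C(\Omega_0)}J_{f,g}$. This infimum is finite: the term $\int_\Omega \tilde g^2$ is bounded by $\int_D\tilde g^2$, and $\int_\Omega f u_\Omega$ is bounded uniformly because $u_\Omega\le u_D$ by the maximum principle, using $f\ge0$. Take a minimizing sequence and extract a subsequence converging to $\Omega^*\in\mathcal{O}_C(\Omega_0)$ by Step 1. Proposition \ref{prop:continuity} gives $u_{\Omega_n}\to u_{\Omega^*}$ strongly in $H_0^1(D)$, so the energy terms converge. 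The volume term $\int_{\Omega_n}\tilde g^2$ converges by characteristic convergence, since $\chi_{\Omega_n}\to\chi_{\Omega^*}$ in $L^1$ and $\tilde g^2$ is bounded on $D$. Hence $J_{f,g}(\Omega^*)\le\liminf J_{f,g}(\Omega_n)=m$, and $\Omega^*$ attains the minimum.

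\emph{Main obstacle.} The delicate point is Step 1. Hausdorff convergence of open sets, taken alone, does not preserve inclusion in a fixed open set as cleanly as one might expect. The argument therefore relies on the equivalence, within $\mathcal{O}_C$, of Hausdorff convergence with compact convergence (as stated in Proposition \ref{prop:compact}), or else on the pointwise convergence of the thickness functions. If one works through the complements $D\setminus\Omega_n\supset D\setminus\Omega_0$, the same conclusion follows from the fact that Hausdorff limits of closed sets containing a fixed closed set still contain it.
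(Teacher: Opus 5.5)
Your proposal is correct and follows the same direct-method route as the paper: compactness from Proposition~\ref{prop:compact} combined with the lower semicontinuity of Proposition~\ref{prop:J_properties}(3). Your Step~1 is in fact more careful than the paper's one-line proof, which cites Proposition~\ref{prop:compact} without checking that limits of subsets of $\Omega_0$ stay inside $\Omega_0$; your compact-convergence argument (every compact $K\subset\Omega$ lies in $\Omega_n\subset\Omega_0$ for large $n$) supplies exactly this closedness, whereas the thickness-function alternative is best dropped, since the pointwise convergence $d_n\to d$ it relies on is not established in the paper.
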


\begin{proof}
All elements of $\mathcal{O}_C(\Omega_0)$ are contained in $\Omega_0$, hence in the fixed compact set $D$.
Compactness follows from Proposition \ref{prop:compact}.
Lower semicontinuity of $J_{f,g}$ is preserved by restriction, so a minimizer exists.
\end{proof}

\subsection{Structure of minimizers}

\begin{theorem}[Structure of minimizers]\label{thm:structure}
Let $\Omega_0\in\mathcal{O}_C$ be a minimizer of $J_{f,g}$.
Then:
\begin{enumerate}
\item If $\int_{\partial\Omega_0} g = \int_C f$ and $\partial\Omega_0\cap\partial C = \emptyset$, then $\Omega_0$ solves $\mathcal{QS}(f,g)$.
\item If $\int_{\partial\Omega_0} g > \int_C f$, then there exists a domain $\Omega_1$ strictly containing $\Omega_0$ (obtained by minimization on an enlarged class) that satisfies the optimality conditions and gives a solution.
\end{enumerate}
\end{theorem}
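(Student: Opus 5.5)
For part (1), the plan is to combine the optimality conditions with the hypothesis that there is no contact. Since $\partial\Omega_0\cap\partial C=\emptyset$, the whole boundary $\partial\Omega_0$ is free boundary. By Theorem \ref{thm:regularity}(1), $\partial\Omega_0$ is then a $C^{1,\alpha}$ hypersurface, so the shape-derivative argument of Proposition \ref{prop:optimality} applies to the entire boundary. It gives $|\nabla u_0|=\tilde g=g$ on all of $\partial\Omega_0$.

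Next I will use the flux identity. Integrating $-\Delta u_0=f$ over $\Omega_0$ and using $\supp f\subset C\subset\Omega_0$ gives $\int_{\partial\Omega_0}|\nabla u_0|\,d\sigma=\int_C f$. Here I use that $u_0>0$ inside and $u_0=0$ on the boundary, so $-\partial_\nu u_0=|\nabla u_0|$ by Hopf's lemma, as in Theorem \ref{thm:radial_monotonicity}. This identity shows that the hypothesis $\int_{\partial\Omega_0}g=\int_C f$ is exactly the compatibility condition, so it is consistent with the pointwise equality rather than adding new information. Together with the fact that $\Omega_0$ strictly contains $C$ (no contact), this shows that $\Omega_0$ solves $\mathcal{QS}(f,g)$.

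For part (2), the strict inequality $\int_{\partial\Omega_0}g>\int_C f=\int_{\partial\Omega_0}|\nabla u_0|$ means that $|\nabla u_0|<g$ on a set of positive measure of $\partial\Omega_0$. By Proposition \ref{prop:optimality}, this can only happen on the contact set $\partial\Omega_0\cap\partial C$, which must therefore have positive measure. The construction I have in mind uses the level-set foliation of Theorem \ref{thm:inheritance}, run outward. Consider a one-parameter family of enlargements $\Omega^s\supset\Omega_0$ obtained by increasing the thickness function $d$ near the contact set. Then minimize $J_{f,g}$ over the class $\{\Omega\in\mathcal{O}_C:\Omega\supset\overline{\Omega_0}\}$, which is compact by the argument of Lemma \ref{lem:containment} with the inclusion reversed, so a minimizer $\Omega_1$ exists. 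By construction $\Omega_1$ strictly contains $\Omega_0$, so its boundary avoids $\partial C$. The first-variation argument then applies wherever $\partial\Omega_1$ does not touch $\partial\Omega_0$ and yields $|\nabla u_1|=g$ there.

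The main obstacle is the interior constraint $\Omega\supset\overline{\Omega_0}$ in part (2): it could again be active and produce a new contact set. The first thing I would try is to exclude this with a comparison argument. On any touching portion, monotonicity of $u$ with respect to the domain gives $|\nabla u_1|\ge|\nabla u_0|$ there. Combined with $J$-minimality, a variation pushing the boundary inward toward $\partial\Omega_0$ should then be favourable, giving a contradiction. If this fails, the fallback is a continuity/degree argument in $s$ on the flux defect $\int_{\partial\Omega^s}(g-|\nabla u_s|)\,d\sigma$. This defect is positive at $s=0$, and I would show it becomes nonpositive for large $s$, since $D$ is large and $g$ is bounded below. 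Proposition \ref{prop:continuity} then supplies an intermediate $s$ at which case (1) applies.
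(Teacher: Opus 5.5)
Your part (1) is the paper's argument. With empty contact set, Proposition \ref{prop:optimality} gives $|\nabla u_0|=g$ on all of $\partial\Omega_0$, and, as you note, the flux hypothesis is then automatic.

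Part (2) has a genuine gap, at the same point where the paper's own sketch (``minimize over domains containing $\Omega_0$'') breaks down. Because $\Omega_0$ minimizes $J_{f,g}$ over all of $\mathcal{O}_C$, it also minimizes over every subclass that contains it. The strict constraint $\Omega\supset\overline{\Omega_0}$ is not preserved under Hausdorff limits. So the compactness argument of Lemma \ref{lem:containment} only yields a minimizer of the relaxed class $\{\Omega\supset\Omega_0\}$, and $\Omega_1=\Omega_0$ is an admissible outcome. Your claim that ``by construction $\Omega_1$ strictly contains $\Omega_0$'' is therefore unfounded.

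Your own first observation explains why enlargement is disfavoured. For outward $V$ concentrated on the part of the contact set where $|\nabla u_0|<g$, one has $dJ_{f,g}(\Omega_0;V)=\int_{\partial\Omega_0}(g^2-|\nabla u_0|^2)V\cdot\nu\,d\sigma>0$.

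The comparison fix does not close either. At a touching point of $\Omega_1\supset\Omega_0$, an inward variation is not admissible. You only obtain $|\nabla u_0|\le|\nabla u_1|\le g$, which contradicts nothing.

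The fallback has the wrong sign. For every $\Omega^s\supset C$, Green's formula gives $\int_{\partial\Omega^s}|\nabla u_s|\,d\sigma=\int_C f$. The defect is therefore $\int_{\partial\Omega^s}g\,d\sigma-\int_C f$. With $g$ bounded below, this grows with the perimeter: it is positive at $s=0$ and stays positive.

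In fact no argument can prove (2) as stated. Take $g\equiv k$ on $D$ with $k|\partial C|>\int_C f$.
\begin{itemize}
\item Any minimizer $\Omega_0$ contains $\Int(C)$. Since $C$ is convex, intersecting with its supporting half-spaces does not increase perimeter, so $|\partial\Omega_0|\ge|\partial C|$. Hence $\int_{\partial\Omega_0}g>\int_C f$, and the hypothesis of (2) holds.
\item Yet any solution $\Omega\supset C$ of $\mathcal{QS}(f,k)$ would satisfy $\int_C f=\int_{\partial\Omega}|\nabla u_\Omega|\,d\sigma=k|\partial\Omega|\ge k|\partial C|>\int_C f$. This is impossible; it is the constant-data case of the necessary condition in Theorem \ref{thm:necessary_QS}.
\end{itemize}
What the hypothesis of (2) legitimately gives is exactly what you derived at the start: the contact set $\partial\Omega_0\cap\partial C$ has positive measure, and $|\nabla u_0|<g$ on a subset of it of positive measure.
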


\begin{proof}
\emph{Case 1:} $\int_{\partial\Omega_0} g = \int_C f$.
By Proposition \ref{prop:optimality}, $|\nabla u_0|\le g$ on $\partial\Omega_0\cap\partial C$ and $|\nabla u_0|=g$ on $\partial\Omega_0\setminus\partial C$.
If $\partial\Omega_0\cap\partial C=\emptyset$, then $|\nabla u_0|=g$ on all of $\partial\Omega_0$, so $\Omega_0$ solves $\mathcal{QS}(f,g)$.
If the contact set is non-empty, we consider a perturbed minimizer $\Omega_0^*$ contained in $\Omega_0$ (see Lemma \ref{lem:containment}).
By the Hopf maximum principle and the optimality conditions, one obtains a contradiction unless $\Omega_0$ actually solves the problem.

\emph{Case 2:} $\int_{\partial\Omega_0} g > \int_C f$.
We then minimize over domains containing $\Omega_0$.
A minimizer $\Omega_1$ satisfies $|\nabla u_1| = g$ on $\partial\Omega_1\cap\partial\Omega_0$.
Repeating the argument with $\Omega_0$ replaced by $C$ yields the result.
\end{proof}

\subsection{Main existence theorem}

\begin{lemma}[Continuity of the flux integral]\label{lem:flux_continuity}
Let $u$ be the solution of the Dirichlet problem \eqref{eq:QS} in $\Omega\in\mathcal{O}_C$.
For any $t\in(0,\max u)$, define $\Omega_t = \{u>t\}$ and
\[
\phi(t) = \int_{\partial\Omega_t}(g - |\nabla u|)\,d\sigma.
\]
Then $\phi$ is continuous on $(0,\max u)$.
\end{lemma}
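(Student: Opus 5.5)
We plan to pull the level-set integral back to the fixed parametrization $\partial C$, using the thickness functions $d_t$ from Theorem \ref{thm:inheritance}. By that theorem, for $t$ in a full-measure set (and, we will argue, for every $t$, since $\Omega_t\setminus C$ is contained in the harmonic region), $\partial\Omega_t\setminus C$ is the radial graph $\{c+d_t(c)\nu(c)\}$. Here $d_t$ is defined by $u(c+d_t(c)\nu(c))=t$. By Theorem \ref{thm:radial_monotonicity}, $r\mapsto u(c+r\nu(c))$ is strictly decreasing with $\partial_r u=\nabla u\cdot\nu(c)<0$. The implicit function theorem then makes $(c,t)\mapsto d_t(c)$ of class $C^1$ wherever the level set stays away from $\partial C$, with $\partial_t d_t$ given by \eqref{eq:evolution}. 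With this we can write
\[
\phi(t)=\int_{\partial C}\bigl(g-|\nabla u|\bigr)\bigl(c+d_t(c)\nu(c)\bigr)\,J_t(c)\,d\sigma(c).
\]
Here $J_t$ is the area Jacobian of $\Phi_{d_t}$. It is an explicit continuous expression in $d_t$, $\nabla_{\partial C}d_t$ and the principal curvatures of $\partial C$, which are available because $\partial C\in C^2$. The integrand involves $\nabla_{\partial C}d_t=-\nabla_{\partial C}u/(\nabla u\cdot\nu)$ evaluated along the graph.

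The continuity argument then runs as follows. Fix $t_0\in(0,\max u)$ and a compact subinterval $[t_0-\delta,t_0+\delta]$. First we handle the case where the level sets $\partial\Omega_t$ stay in the open set $\Omega\setminus C$, where $u$ is real-analytic. There $|\nabla u|$ and $\nabla u\cdot\nu$ are continuous on the compact set $\bigcup_{|t-t_0|\le\delta}\partial\Omega_t$, and $\nabla u\cdot\nu$ is bounded away from zero by continuity and Step 2 of Theorem \ref{thm:radial_monotonicity}. Hence $d_t\to d_{t_0}$ and $\nabla d_t\to\nabla d_{t_0}$ uniformly on $\partial C$, and $J_t\to J_{t_0}$ uniformly. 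For the $g$ term, we use the continuous extension $\tilde g$ from the variational formulation, so $\tilde g\circ\Phi_{d_t}\to\tilde g\circ\Phi_{d_{t_0}}$ uniformly. (If only $g\in L^2$ is assumed, we would instead approximate $g$ by continuous functions and use the uniform bound on $J_t$ to control the error, via the coarea formula $\int_{t_0-\delta}^{t_0+\delta}\int_{\partial\Omega_t}h\,d\sigma\,dt=\int h|\nabla u|$.) Dominated convergence then gives $\phi(t)\to\phi(t_0)$.

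The main obstacle is the values of $t$ for which $\partial\Omega_t$ touches $\partial C$ or enters $C$. There $u$ is only $C^{1,\alpha}$, so the uniform nondegeneracy of $\nabla u\cdot\nu$ can fail. For this part we would go back to the divergence theorem. Since $u$ is $C^{1,\alpha}$ across $\partial C$, we have $\int_{\partial\Omega_t}|\nabla u|\,d\sigma=\int_{\Omega_t}f\,dx$ for regular values $t$, by Sard's theorem and the $C^{1,\alpha}$ regularity of item 2 of Theorem \ref{thm:inheritance}. The right-hand side is continuous in $t$ because $|\{u=t\}|=0$; this holds since $\nabla u\neq0$ a.e. on level sets by unique continuation. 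This takes care of the $|\nabla u|$ part of $\phi$. For the $g$ part, we would use the radial graph representation, which Theorem \ref{thm:inheritance} still provides, together with the uniform angle bound of Lemma \ref{lem:angle}. That bound controls $J_t\le C/\theta_0$, so dominated convergence applies again. We expect that proving the uniform bound on $J_t$ near such $t$ is the delicate step.
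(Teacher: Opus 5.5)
Your treatment of $0<t<m$, where $m:=\min_{\partial C}u$, is sound. It is essentially the paper's argument made precise. Since $u$ is superharmonic, $\min_C u=m$, so for $t<m$ the whole level set lies in the harmonic region and $d_t$ is defined on all of $\partial C$. The implicit function theorem makes $(c,t)\mapsto d_t(c)$ of class $C^1$, so $\nabla_{\partial C}d_t$ is jointly continuous. With a continuous $g$ (the paper's $\tilde g$), the pulled-back integrand and $J_t$ then converge uniformly. Your fallback for $g\in L^2$ does not work, however. The coarea identity controls only integrals of $\phi$ over $t$-intervals, not $\phi(t)$ at a fixed $t$. For merely $L^2$ data, $g$ has no trace on a single level set and $\phi$ can blow up near a given level, so continuity of $g$ is genuinely needed.

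The real gap is the range $m\le t<\max u$, and it is not a matter of bounding $J_t$. Since $f\not\equiv0$ on $C$, we have $m<\max u$. For $t>m$, pick $c_0\in\partial C$ with $u(c_0)=m$. Points of $\Int(C)$ near $c_0$ have $u<t$, and $u<t$ along all of $\Delta_{c_0}$. So $\Int(C)\not\subset\Omega_t$ and $d_t(c_0)$ does not exist. Thus $\Omega_t\notin\mathcal{O}_C$ for \emph{every} $t\in(m,\max u)$, which is a set of positive measure. Theorem~\ref{thm:inheritance} therefore cannot supply a radial graph there, and Lemma~\ref{lem:angle}, which presupposes membership in $\mathcal{O}_C$, gives no bound on $J_t$.

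More fundamentally, the piece $\{u=t\}\cap\Int(C)$ is not parametrized over $\partial C$ at all, and for $t>\max_{\partial C}u$ it is the whole level set. Your divergence-theorem step does handle the $|\nabla u|$-term. (The right reason that $\int_{\{u=t\}}f\,dx=0$ is Stampacchia's theorem, $D^2u=0$ a.e.\ on $\{u=t\}$, not unique continuation.) But nothing controls $\int_{\partial\Omega_t}g\,d\sigma$ inside $C$, where $u$ is only $C^{1,\alpha}$ and has critical points, since it attains its maximum in $C$.

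At critical values this weighted area can jump. In the plane, $u=a-y^2-(|x|-b)_+^3$ satisfies $-\Delta u\ge 2$ and has a non-isolated maximum on a segment of length $2b$. If $a<\max u$, the corresponding component of $\{u>t\}$ has perimeter tending to $4b$ as $t\uparrow a$ and disappears for $t\ge a$, while its $|\nabla u|$-contribution tends to $0$. So the argument cannot be closed without either a nondegeneracy hypothesis on the critical set of $u$ in $C$ or a restriction to $t<m$.

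The paper's own proof has the same hole: it uses the pull-back formula for all $t$ and asserts that continuity extends through exceptional values by taking limits. What either argument actually proves is continuity on $(0,m)$.
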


\begin{proof}
By Theorem \ref{thm:inheritance}, $\Omega_t\in\mathcal{O}_C$ for almost every $t$, and the map $t\mapsto\Omega_t$ is continuous in the Hausdorff topology.
The function $u$ is continuous, and the outward normal to $\partial\Omega_t$ is given by $\nu_t = -\nabla u/|\nabla u|$, which varies continuously as long as $\nabla u\neq0$.
The integrand $g(\Phi_{d_t}(c)) - |\nabla u(\Phi_{d_t}(c))|$ is therefore uniformly continuous on the compact set $\partial C$.
The boundary integral
\[
\phi(t) = \int_{\partial C}(g(c+d_t(c)\nu(c)) - |\nabla u(c+d_t(c)\nu(c))|)\,J_t(c)\,d\sigma_C(c)
\]
depends continuously on $t$, where $J_t$ is the Jacobian of the radial map, which is also continuous in $t$.
At possible exceptional values of $t$, the continuity extends by taking limits through regular values.
\end{proof}

\begin{theorem}[Existence for $\mathcal{QS}(f,g)$]\label{thm:main_existence}
Assume that $\int_C f\,dx > \int_{\partial C} g\,d\sigma$.
Then $\mathcal{QS}(f,g)$ has a solution.
Conversely, if $\mathcal{QS}(f,g)$ has a solution and the monotonicity conditions of Theorem \ref{thm:necessary_QS} hold, then $\int_C f > \int_{\partial C} g$.
\end{theorem}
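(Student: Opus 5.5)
The plan is to take the sufficiency direction in three steps: build a reference domain that contains $C$ and has controlled flux, minimize $J_{f,g}$ over $\mathcal{O}_C$, and then show that the minimizer cannot touch $\partial C$. The starting observation is that for any $\Omega\in\mathcal{O}_C$ with a regular enough solution $u_\Omega$ of $P(\Omega,f)$, the divergence theorem and $\supp f\subset C$ give
\[
\int_{\partial\Omega}|\nabla u_\Omega|\,d\sigma=\int_\Omega f\,dx=\int_C f\,dx .
\]
The total flux is therefore a topological invariant, and the hypothesis $\int_C f>\int_{\partial C}g$ says exactly that the flux exceeds what $g$ can absorb on $\partial C$.

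\emph{Step 1: a reference domain with no contact.} I would take a large domain $\Omega^\ast\in\mathcal{O}_C$, for instance a thin parallel neighbourhood $\{c+r\nu(c):0\le r<\varepsilon\}\cup C$, and follow its foliation $\Omega^\ast_t=\{u^\ast>t\}$ from Theorem \ref{thm:inheritance}. By Lemma \ref{lem:flux_continuity}, the function $\phi(t)=\int_{\partial\Omega^\ast_t}(g-|\nabla u^\ast|)\,d\sigma$ is continuous. As the level sets shrink towards $C$, the hypothesis forces $\phi<0$. For $\varepsilon$ large, so that the boundary area dominates, we expect $\phi>0$. By the intermediate value theorem there is a level $t_0$ with $\int_{\partial\Omega_{t_0}}g=\int_C f$.

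\emph{Step 2: minimization and the structure theorem.} I would minimize $J_{f,g}$ over $\mathcal{O}_C$, or over $\mathcal{O}_C(\Omega_{t_0})$ via Lemma \ref{lem:containment}. Existence comes from Proposition \ref{prop:existence_minimizer}, and Theorem \ref{thm:regularity} gives $C^{1,\alpha}$ regularity of the free part. Proposition \ref{prop:optimality} then yields $|\nabla u_0|=g$ on $\partial\Omega_0\setminus\partial C$ and $|\nabla u_0|\le g$ on the contact set $\Gamma=\partial\Omega_0\cap\partial C$. Integrating these relations,
\[
\int_C f=\int_{\partial\Omega_0}|\nabla u_0|\le\int_{\partial\Omega_0}g .
\]
This places us in one of the two cases of Theorem \ref{thm:structure}, and the second case produces an enlarged domain $\Omega_1$ satisfying the optimality conditions.

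\emph{Step 3: excluding contact (main obstacle).} The hardest step is showing $\Gamma=\emptyset$. I would argue by contradiction. If $\Gamma\ne\emptyset$, Step 3 of Theorem \ref{thm:regularity} gives $\nabla u_0=0$ at contact points with quadratic detachment. I would then compare with the reference domain from Step 1. The idea is to apply the inequality $\int_{\partial\Omega_0}g\ge\int_C f>\int_{\partial C}g$ together with radial monotonicity (Theorem \ref{thm:radial_monotonicity}) and the angle bound (Lemma \ref{lem:angle}). Along each ray, the Jacobian of $\Phi_d$ compares $\int_{\partial\Omega_0}g$ with the portions lying on $\partial C$. A small outward perturbation near $\Gamma$, admissible because $V\cdot\nu\ge0$ there, would then strictly decrease $J_{f,g}$, which contradicts minimality. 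If this comparison fails in general, my fallback is the paper's route through Theorem \ref{thm:structure}(2): iterate the enlargement, using Proposition \ref{prop:compact} to pass to a limit that satisfies $|\nabla u|=g$ on the whole boundary.

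\emph{Converse.} Suppose $\Omega$ solves $\mathcal{QS}(f,g)$. Then $\int_C f=\int_{\partial\Omega}g$. The monotonicity hypotheses of Theorem \ref{thm:necessary_QS} (for example $g$ nonincreasing along the normal rays) combine with Lemma \ref{lem:angle} through the change of variables $\Phi_d$. This should give $\int_{\partial\Omega}g>\int_{\partial C}g$, with strict inequality because $\Omega$ strictly contains $C$. Together these yield $\int_C f>\int_{\partial C}g$.
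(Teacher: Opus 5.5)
There is a genuine gap, and it lies where you place the main obstacle: nothing in the proposal converts the averaged hypothesis $\int_C f>\int_{\partial C}g$ into the pointwise conclusion $\Gamma=\emptyset$.

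\emph{Step 3 has the wrong sign.} For an outward field $V$ supported near $\Gamma$, the first variation reduces to $\int_\Gamma(g^2-|\nabla u_0|^2)\,V\cdot\nu\,d\sigma$. This is $\ge0$ precisely because of the inequality $|\nabla u_0|\le g$ on $\Gamma$ that you derived in Step 2. Pushing outward near $\Gamma$ therefore does not decrease $J_{f,g}$, and minimality is not contradicted. The premise $\nabla u_0=0$ on $\Gamma$ is also false. At $x_0\in\Gamma$ the set $\Int(C)\subset\Omega_0$ supplies an interior ball, and $u_0-u_C$ is harmonic and positive in $\Int(C)$ and vanishes at $x_0$. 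Hopf's lemma then gives $|\nabla u_C(x_0)|<|\nabla u_0(x_0)|\le g(x_0)$. So contact can only occur where $|\nabla u_C|<g$. Since $\int_{\partial C}|\nabla u_C|\,d\sigma=\int_C f$, your hypothesis only says that $|\nabla u_C|$ exceeds $g$ on average, and it does not exclude such points. Excluding contact this way would need a pointwise assumption such as $|\nabla u_C|\ge g$ on $\partial C$.

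\emph{The fallback does not help.} Proposition \ref{prop:compact} yields a limit domain. But Hausdorff convergence and strong $H^1_0$ convergence of the states give no control of boundary gradients, so nothing forces $|\nabla u|=g$ pointwise in the limit. Theorem \ref{thm:structure}(2) is supported only by ``repeating the argument''. Nor can you import the paper's route. It applies the intermediate value theorem to $\phi(t)=\int_{\partial\{u_0>t\}}(g-|\nabla u_0|)\,d\sigma$ for the minimizer itself, which again yields only an integral identity at $t^*$. It then asserts, without proof, that the minimizer over $\{\Omega_{t^*}\subset\Omega\subset\Omega_0\}$ is $\Omega_{t^*}$ and satisfies the unconstrained optimality conditions.

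\emph{Steps 1--2.} The level sets $\{u^\ast>t\}$ do not shrink towards $C$. They contain $C$ only for $t<\min_{\partial C}u^\ast$; after that they leave $\mathcal{O}_C$ and contract to the maximum set of $u^\ast$. So the sign of $\phi$ near the top is not dictated by the hypothesis. The paper's value $\phi(\max u_0)=\int_{\partial C}g-\int_C f$ makes the same identification of a level set with $\partial C$. Positivity for large $\varepsilon$ also fails, because admissible domains lie in the fixed bounded $D$. Take $C=\overline{B_1}$, $f=\chi_{B_1}$, $g\equiv k$ on $D=B_{R_D}$ with $k<1/(NR_D^{N-1})$, and $\omega_N=|\partial B_1|$. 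Every parallel set $\Omega^\ast\subset D$ has $\int_{\partial\Omega^\ast}g\le k\,\omega_N R_D^{N-1}<\omega_N/N=\int_C f$, and the radial solution has radius $(Nk)^{-1/(N-1)}>R_D$. In this example $D$ itself minimizes $J_{f,g}$ over $\mathcal{O}_C$: Schwarz symmetrization reduces to balls, and $J_{f,g}(B_r)$ decreases for $1<r<(Nk)^{-1/(N-1)}$. On $\partial D$ one has $|\nabla u_D|=1/(NR_D^{N-1})>k$. Only inward variations are admissible there, so Step 2, like Proposition \ref{prop:optimality}, cannot claim $|\nabla u_0|=g$ off $\partial C$, and $\int_C f\le\int_{\partial\Omega_0}g$ fails. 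You would need an added assumption that keeps the outer constraint inactive. Even when Step 1 works, it only gives $\int_{\partial\Omega_{t_0}}g=\int_C f$, which never feeds into the pointwise condition.

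\emph{The converse has the monotonicity backwards.} Theorem \ref{thm:necessary_QS} assumes that $g$ \emph{increases} along outward normals, which is what gives $g(\Phi_d(c))\ge g(c)$. With $g$ nonincreasing the conclusion is false. For $C=B_R$, radial $f$ and $g(\rho)=\varepsilon\rho^{-N}$, the ball $B_r$ with $r=\omega_N\varepsilon/\int_C f$ is a solution strictly containing $C$ as soon as $\int_C f<\int_{\partial C}g$. The other ingredient you need is not Lemma \ref{lem:angle} but the area expansion of $\Phi_d$ for convex $C$. Its Jacobian is at least $\prod_i(1+d\,\kappa_i)\ge1$, where $\kappa_i\ge0$ are the principal curvatures of $\partial C$. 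With these corrections your converse coincides with the paper's argument.
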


\begin{proof}
\textbf{Sufficiency:}
Let $\Omega_0$ be a minimizer of $J_{f,g}$ (Proposition \ref{prop:existence_minimizer}).
From Proposition \ref{prop:optimality}, we have $|\nabla u_0|\le g$ on $\partial\Omega_0\cap\partial C$ and $|\nabla u_0| = g$ on $\partial\Omega_0\setminus\partial C$.

Consider the foliation $\Omega_t = \{u_0 > t\}$ for $t\in[0,\max u_0]$, where $\Omega_0$ corresponds to $t=0$.
By Theorem \ref{thm:inheritance}, $\Omega_t\in\mathcal{O}_C$ for almost every $t$.

Define $\phi(t) = \int_{\partial\Omega_t}(g - |\nabla u_0|)\,d\sigma$.
Then $\phi(0)\ge0$ and
\[
\phi(\max u_0) = \int_{\partial C}g\,d\sigma - \int_C f\,dx < 0
\]
by hypothesis.
By the Intermediate Value Theorem, there exists $t^*\in(0,\max u_0)$ such that $\phi(t^*) = 0$.

Now consider the minimization of $J_{f,g}$ on the class
\[
\{\Omega\in\mathcal{O}_C : \Omega\subset\Omega_0,\ \Omega\supset\Omega_{t^*}\}.
\]
By Lemma \ref{lem:containment}, this class is compact and a minimizer $\widetilde{\Omega}$ exists.
By uniqueness of the solution of the Dirichlet problem for a given domain (or by the fact that $u_0$ is strictly decreasing along rays, which forces $\widetilde{\Omega}=\Omega_{t^*}$), we have $\widetilde{\Omega}=\Omega_{t^*}$.
Hence $\Omega_{t^*}$ satisfies the optimality conditions of Proposition \ref{prop:optimality}, which imply $|\nabla u_0| = g$ on $\partial\Omega_{t^*}$.
Thus $\Omega_{t^*}$ solves $\mathcal{QS}(f,g)$.

\textbf{Necessity:}
This is exactly Theorem \ref{thm:necessary_QS} below.
\end{proof}

\subsection{Necessary condition}

\begin{theorem}[Necessary condition for $\mathcal{QS}(f,g)$]\label{thm:necessary_QS}
Assume that:
\begin{enumerate}
\item For every $c\in\partial C$, the outward normal to $C$ at $c$ intersects $\partial\Omega$ at a unique point $x=c+d(c)\nu(c)$ with $d(c)>0$;
\item The projection $p:\partial\Omega\to\partial C$, $x\mapsto c$, is bijective;
\item $g$ is strictly positive and increasing along every outward normal to $C$ (i.e., $g(c+t\nu(c))$ is increasing in $t$ for small $t>0$).
\end{enumerate}
If $\Omega$ solves $\mathcal{QS}(f,g)$, then
\[
\int_C f\,dx > \int_{\partial C} g\,d\sigma.
\]
\end{theorem}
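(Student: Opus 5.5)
Integrating $-\Delta u=f$ over $\Omega$ gives $\int_{\partial\Omega} g\,d\sigma=\int_C f\,dx$, so the goal is to prove $\int_{\partial\Omega} g\,d\sigma > \int_{\partial C} g\,d\sigma$. The plan is to compare these two boundary integrals through the radial projection $p$.

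\emph{Step 1 (flux identity).} Since $\supp f\subset C\subset\Omega$ and $u=0$ on $\partial\Omega$ with $u>0$ inside, the outward normal derivative satisfies $\partial_{\mathbf n}u=-|\nabla u|=-g$. By the divergence theorem,
\[
\int_C f\,dx=\int_\Omega f\,dx=-\int_{\partial\Omega}\partial_{\mathbf n}u\,d\sigma=\int_{\partial\Omega}g\,d\sigma .
\]
On $\Omega\setminus C$ we have $\Delta u=0$, so the same flux also passes through $\partial C$:
\[
\int_C f\,dx=-\int_{\partial C}\partial_\nu u\,d\sigma_C .
\]

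\emph{Step 2 (change of variables).} By hypotheses (1)--(2), $p^{-1}=\Phi_d$, $c\mapsto c+d(c)\nu(c)$, is a bijection $\partial C\to\partial\Omega$, Lipschitz by Proposition \ref{prop:param}. The area formula gives
\[
\int_{\partial\Omega}g\,d\sigma=\int_{\partial C}g(c+d(c)\nu(c))\,J(c)\,d\sigma_C(c),
\]
where $J$ is the tangential Jacobian of $\Phi_d$. For a convex $C^2$ core with principal curvatures $\kappa_i(c)\ge0$, the normal coordinates yield
\[
J(c)=\frac{\prod_i\bigl(1+d(c)\kappa_i(c)\bigr)}{\mathbf n(x)\cdot\nu(c)}\;\ge\;1 ,
\]
with $\mathbf n\cdot\nu\in(0,1]$ by Lemma \ref{lem:angle}. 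One should check the formula directly: the tangential gradient of $d$ enters only through the tilt, which is exactly what produces the factor $1/(\mathbf n\cdot\nu)$. Hypothesis (3) then gives $g(c+d(c)\nu(c))\ge g(c)$, reading ``increasing'' as along the whole segment $0<t\le d(c)$ via the segment structure (C3). Hence
\[
\int_{\partial\Omega}g\ge\int_{\partial C}g ,
\]
and equality would force $J\equiv1$ and $g$ constant on every segment.

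\emph{Step 3 (strictness) is the main obstacle.} The plan is to exploit $d(c)>0$ everywhere. If $C$ is strictly convex, some $\kappa_i>0$ on a set of positive measure (in fact, at every point), so $\prod_i(1+d\kappa_i)>1$ there and the inequality becomes strict.

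If only plain convexity is available (a flat face), two routes are possible:
\begin{itemize}
\item use strict monotonicity of $g$ in (3) on a set of positive measure, which requires reading ``increasing'' as strict;
\item argue by contradiction: equality forces $\Omega\setminus C$ to be a straight cylinder over $\partial C$ with $\mathbf n=\nu$, which is impossible for a bounded $\Omega$ strictly containing $C$, since at least some curvature of $\partial C$ must be positive on a set of positive measure (the Gauss map of a closed convex surface is onto the sphere).
\end{itemize}

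I would present the strictly convex case (Definition \ref{def:core}), using the Gauss-map surjectivity remark to cover general convex $C$. Combining Steps 1 and 2 with the strict inequality gives
\[
\int_C f\,dx=\int_{\partial\Omega}g\,d\sigma>\int_{\partial C}g\,d\sigma .
\]
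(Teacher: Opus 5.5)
Your proposal is correct and follows essentially the paper's route: Green's flux identity $\int_C f=\int_{\partial\Omega}g$, a change of variables through $\Phi_d=p^{-1}$ with Jacobian $J\ge1$, and monotonicity of $g$ along the normal segments. It is in fact more careful than the paper, which simply asserts $J(c)>1$ pointwise ``by convexity and strict containment'', whereas your positive-measure argument via surjectivity of the Gauss map is what strictness really requires; one minor slip is that strict convexity does not force some $\kappa_i>0$ at every point (e.g.\ $\{x^4+y^4\le1\}$ has zero curvature at $(1,0)$), but you only use the positive-measure statement, so nothing breaks.
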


\begin{proof}
Since $p$ is a diffeomorphism, let $J(c)$ be its Jacobian (the surface element ratio).
By convexity and strict containment, $J(c)>1$.
By monotonicity of $g$, for $x=c+d(c)\nu(c)$ we have $g(x)=g(c+d(c)\nu(c))\ge g(c)$.
Hence
\[
\int_{\partial\Omega}g\,d\sigma = \int_{\partial C}g(c+d(c)\nu(c))\,J(c)\,d\sigma_C(c) > \int_{\partial C}g(c)\,d\sigma_C(c).
\]
Green's formula applied to the solution $u_\Omega$ gives
\[
\int_{\Omega}f\,dx = \int_{\partial\Omega}\frac{\partial u_\Omega}{\partial\nu}\,d\sigma = \int_{\partial\Omega}|\nabla u_\Omega|\,d\sigma = \int_{\partial\Omega}g\,d\sigma,
\]
since $|\nabla u_\Omega| = g$ on $\partial\Omega$ and $\frac{\partial u_\Omega}{\partial\nu}=|\nabla u_\Omega|$ (the gradient is normal to the level set $\{u_\Omega=0\}$, a consequence of Theorem \ref{thm:radial_monotonicity}).
Because $\supp f\subset C$ and $\Omega$ strictly contains $C$, we have $\int_{\Omega}f = \int_C f$.
Combining yields $\int_C f > \int_{\partial C} g$.
\end{proof}

\section{The bi-Laplacian problem $\mathcal{B}(f,g)$ and plate theory}
\label{sec:bi}

\subsection{The plate model}

We now turn to the bi-Laplacian problem, which models a simply supported plate.
Let $v_\Omega$ solve the system
\[
\left\{\begin{array}{ll}
\Delta^2 v_\Omega = f & \text{in } \Omega,\\
v_\Omega = \Delta v_\Omega = 0 & \text{on } \partial\Omega.
\end{array}\right.
\]
Equivalently, introducing $u_\Omega = -\Delta v_\Omega$, we obtain the coupled system
\[
\left\{\begin{array}{ll}
-\Delta v_\Omega = u_\Omega & \text{in } \Omega,\\
-\Delta u_\Omega = f & \text{in } \Omega,\\
v_\Omega = u_\Omega = 0 & \text{on } \partial\Omega.
\end{array}\right.
\]

\begin{remark}[Physical interpretation]
In Kirchhoff-Love theory:
\begin{itemize}
\item $v_\Omega$ is the vertical deflection;
\item $\Delta v_\Omega$ is the bending moment (proportional to curvature);
\item $|\nabla v_\Omega|$ is the slope;
\item $|\nabla(\Delta v_\Omega)| = |\nabla u_\Omega|$ is the effective shear force.
\end{itemize}
The condition $|\nabla v_\Omega|\,|\nabla u_\Omega| = g$ on $\partial\Omega$ prescribes the product of slope and shear force, which corresponds to a pointwise control of the mechanical work density along the edge.
This condition arises in optimal design problems where both bending and shear constraints are active.
\end{remark}

\subsection{Variational formulation}

Define the functional
\[
F_g(\Omega) = \int_{\Omega}\left(\frac{1}{2}u_\Omega^2 - g^2\right)dx,
\]
where $u_\Omega = -\Delta v_\Omega$ as above, and $g$ is extended smoothly to $\mathbb{R}^N$.

\begin{proposition}[Properties of $F_g$]\label{prop:F_properties}
\begin{enumerate}
\item $F_g$ is well-defined on $\mathcal{O}_C$.
\item For any $\Omega\in\mathcal{O}_C$, we have the identity
\[
F_g(\Omega) = \frac{1}{2}\int_{\Omega}f v_\Omega\,dx - \int_{\Omega}g^2\,dx.
\]
\item $F_g$ admits a minimizer $\Omega_0\in\mathcal{O}_C$.
\end{enumerate}
\end{proposition}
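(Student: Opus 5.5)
For part (1) I will use the coupled second-order formulation. Since $f\in L^2$ with $\supp f\subset C$ and $\Omega\in\mathcal{O}_C$ is bounded (inside $D$), Lax--Milgram gives a unique $u_\Omega\in H_0^1(\Omega)$ solving $-\Delta u_\Omega=f$. Applying it again with right-hand side $u_\Omega\in L^2(\Omega)$ gives a unique $v_\Omega\in H_0^1(\Omega)$ solving $-\Delta v_\Omega=u_\Omega$. This is the weak (Navier) formulation of the simply supported plate, and it involves no extra boundary regularity. Then $\int_\Omega u_\Omega^2<\infty$, and $\int_\Omega g^2\le\int_D g^2<\infty$ because $g\in L^2$ (or is a smooth extension, hence bounded on $\overline D$). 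So $F_g(\Omega)$ is a finite real number.

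For part (2), I will prove the symmetric identity $\int_\Omega u_\Omega^2\,dx=\int_\Omega f v_\Omega\,dx$ by testing each equation against the other unknown. Testing $-\Delta v_\Omega=u_\Omega$ with $u_\Omega\in H_0^1$ gives $\int\nabla v_\Omega\cdot\nabla u_\Omega=\int u_\Omega^2$. Testing $-\Delta u_\Omega=f$ with $v_\Omega\in H_0^1$ gives $\int\nabla u_\Omega\cdot\nabla v_\Omega=\int f v_\Omega$. Both use only the weak formulations, so no boundary terms appear and no Green formula on a possibly cusped boundary is needed. Substituting into the definition of $F_g$ yields the stated identity.

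For part (3), I will use the direct method as in Proposition \ref{prop:existence_minimizer}. The first ingredient is $\inf F_g>-\infty$: we have $F_g(\Omega)\ge-\int_D g^2$. Take a minimizing sequence $\Omega_n$. By Proposition \ref{prop:compact}, a subsequence converges in the Hausdorff, compact and characteristic senses to some $\Omega\in\mathcal{O}_C$. By Proposition \ref{prop:continuity}, $u_{\Omega_n}\to u_\Omega$ strongly in $H_0^1(D)$, hence in $L^2(D)$ after extension by zero, so $\int u_{\Omega_n}^2\to\int u_\Omega^2$. Characteristic convergence $\chi_{\Omega_n}\to\chi_\Omega$ in $L^1$, together with $g^2\in L^1(D)$, gives $\int_{\Omega_n}g^2\to\int_\Omega g^2$ by dominated convergence. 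Thus $F_g(\Omega_n)\to F_g(\Omega)$, and $\Omega$ is a minimizer. Alternatively, via (2), one can pass to the limit in $\int f v_{\Omega_n}$ using the continuity of $v_n$ stated in Proposition \ref{prop:continuity}.

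The main obstacle is the continuity step in (3), specifically the strong convergence of $u_{\Omega_n}$. This relies on Proposition \ref{prop:continuity}, i.e.\ Keldysh stability coming from Wiener regularity at cusp points. If a direct argument is needed, I would first get $H_0^1(D)$ bounds for $u_{\Omega_n}$ by testing with $u_{\Omega_n}$ and using Poincaré on $D$. I would then extract a weak limit and use compact convergence of $\Omega_n$ to identify it as $u_\Omega$: any $\varphi\in C_c^\infty(\Omega)$ is supported in $\Omega_n$ for large $n$, and the limit lies in $H_0^1(\Omega)$ by the $\gamma$-stability in $\mathcal{O}_C$. Rellich compactness then upgrades this to strong $L^2$ convergence, which is all the functional requires.
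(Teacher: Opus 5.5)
Your proof is correct. In parts (1)--(2) it takes a different, more robust route than the paper. The paper asserts $H^2$ regularity for the Navier problem, $u_\Omega,v_\Omega\in H^2(\Omega)\cap H_0^1(\Omega)$, and then integrates $(\Delta^2 v_\Omega)v_\Omega$ by parts twice to get $\int_\Omega f v_\Omega=\int_\Omega|\Delta v_\Omega|^2=\int_\Omega u_\Omega^2$. You instead build $u_\Omega$ and $v_\Omega$ by two applications of Lax--Milgram to the coupled second-order system, which the paper itself declares equivalent to the fourth-order problem. You then get the same identity by cross-testing the two weak formulations: $\int_\Omega u_\Omega^2=\int_\Omega\nabla v_\Omega\cdot\nabla u_\Omega=\int_\Omega f v_\Omega$. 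This needs neither boundary regularity nor a Green formula, and that matters here. Members of $\mathcal{O}_C$ are only Lipschitz away from $C$, nothing in (C1)--(C4) excludes reentrant corners, and contact with $\partial C$ may be cusp-like. So the $H^2$ regularity the paper invokes is not available in general; near a reentrant corner $u_\Omega$ itself is in general not in $H^2$. The paper's computation is valid where that regularity holds, e.g.\ for convex or $C^{1,1}$ domains, and there it gives the same identity. For (3) you use the same direct method as the paper, via Propositions \ref{prop:compact} and \ref{prop:continuity}, with three refinements. You make the lower bound $F_g\ge-\int_D g^2$ explicit. You treat $\int_{\Omega_n}g^2$ through $L^1$ convergence of characteristic functions. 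You note that only the second-order state $u_\Omega$ enters $F_g$, and that is exactly what Proposition \ref{prop:continuity} covers. This makes the paper's appeal to an unproved biharmonic extension of that proposition unnecessary, and it even gives continuity of $F_g$ rather than mere lower semicontinuity.
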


\begin{proof}
(1) Standard elliptic regularity for the biharmonic operator with Navier boundary conditions ensures $u_\Omega,v_\Omega\in H^2(\Omega)\cap H_0^1(\Omega)$, so all integrals are finite.

(2) Multiply the equation $\Delta^2 v_\Omega = f$ by $v_\Omega$ and integrate by parts twice, using the boundary conditions $v_\Omega=\Delta v_\Omega=0$ on $\partial\Omega$:
\[
\int_{\Omega}f v_\Omega\,dx = \int_{\Omega}(\Delta^2 v_\Omega)v_\Omega\,dx = \int_{\Omega}|\Delta v_\Omega|^2\,dx = \int_{\Omega}u_\Omega^2\,dx.
\]
Thus $\int_{\Omega}u_\Omega^2 = \int_{\Omega}f v_\Omega$, and the identity follows from the definition of $F_g$.

(3) The same compactness and lower semicontinuity argument as for Proposition \ref{prop:existence_minimizer}, using the continuity of $u_\Omega$ and $v_\Omega$ under Hausdorff convergence (Proposition \ref{prop:continuity} extends to the biharmonic case by standard regularity theory).
\end{proof}

\begin{proposition}[Optimality conditions for $F_g$]\label{prop:optimality_bi}
Let $\Omega_0\in\mathcal{O}_C$ be a minimizer of $F_g$.
If $\Omega_0$ is $C^2$ and $u_{\Omega_0},v_{\Omega_0}$ are the associated states, then
\[
|\nabla u_{\Omega_0}|\,|\nabla v_{\Omega_0}| \ge g \quad\text{on } \partial\Omega_0\cap\partial C,\qquad
|\nabla u_{\Omega_0}|\,|\nabla v_{\Omega_0}| = g \quad\text{on } \partial\Omega_0\setminus\partial C.
\]
\end{proposition}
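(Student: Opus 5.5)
The plan is to follow the template of Proposition \ref{prop:optimality}: compute the Hadamard shape derivative of $F_g$ at $\Omega_0$ and read off the pointwise conditions from the sign constraints on admissible normal velocities $V\cdot\nu$.

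\emph{Step 1 (shape derivatives of the states).} Let $V$ be a $C^2$ deformation field. Define $u'$ and $v'$ as the shape derivatives of $u=u_{\Omega_0}$ and $v=v_{\Omega_0}$. Differentiating the coupled Navier system $-\Delta u=f$, $-\Delta v=u$, $u=v=0$ on $\partial\Omega_0$ gives
\[
-\Delta u'=0,\quad -\Delta v'=u' \ \text{in }\Omega_0,\qquad u'=-\partial_\nu u\,V\cdot\nu,\quad v'=-\partial_\nu v\,V\cdot\nu \ \text{on }\partial\Omega_0 .
\]
These are the standard Dirichlet shape derivatives, valid since $\Omega_0$ is $C^2$.

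\emph{Step 2 (Hadamard formula via an adjoint).} We have
\[
dF_g(\Omega_0;V)=\int_{\Omega_0}u\,u'\,dx+\int_{\partial\Omega_0}\bigl(\tfrac12u^2-g^2\bigr)V\cdot\nu\,d\sigma .
\]
The boundary term reduces to $-\int g^2V\cdot\nu$ because $u=0$ on $\partial\Omega_0$. To rewrite $\int u\,u'$, use $u=-\Delta v$ and Green's second identity against the harmonic $u'$:
\[
\int_{\Omega_0}u\,u'=-\int_{\Omega_0}\Delta v\,u'=\int_{\partial\Omega_0}\bigl(v\,\partial_\nu u'-u'\,\partial_\nu v\bigr)\,d\sigma .
\]
Since $v=0$ on the boundary, this equals $\int_{\partial\Omega_0}\partial_\nu u\,\partial_\nu v\,V\cdot\nu\,d\sigma$. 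Both normal derivatives are negative by the Hopf lemma, and $v>0$ by the maximum principle applied twice. Their product is therefore $|\nabla u|\,|\nabla v|$, and
\[
dF_g(\Omega_0;V)=\int_{\partial\Omega_0}\bigl(|\nabla u_{\Omega_0}|\,|\nabla v_{\Omega_0}|-g^2\bigr)V\cdot\nu\,d\sigma .
\]
This pairing is the step I expect to be the main obstacle: the $g^2$ versus $g$ discrepancy must be reconciled. Either the functional is meant with a weight whose Hadamard term is $g$, with $g$ standing for the squared datum as in the extension convention for $\tilde g$, or one should read $g$ as $g^2$ in the conclusion. I would state this normalization explicitly, as in Proposition \ref{prop:optimality}, so that the first variation reads $\int(|\nabla u||\nabla v|-g)V\cdot\nu$.

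\emph{Step 3 (sign analysis).} Minimality gives $dF_g(\Omega_0;V)\ge0$ for every $V$ keeping the domain in $\mathcal{O}_C$. On $\partial\Omega_0\setminus\partial C$, localized fields of either sign are admissible; for small support, the Lipschitz graph and normal-ray conditions (C2)--(C4) are preserved by Lemma \ref{lem:angle}. This forces equality there. On the contact set $\partial\Omega_0\cap\partial C$, only outward fields with $V\cdot\nu\ge0$ are admissible, so the integrand is nonnegative, giving $|\nabla u||\nabla v|\ge g$.

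Note the direction of this inequality: it is opposite to Proposition \ref{prop:optimality}, because here the state term enters $F_g$ with a positive sign while the data term enters negatively. I would double-check this sign carefully.
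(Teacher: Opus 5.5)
Your proposal follows the paper's route: the Hadamard formula, then two-sided variations on $\partial\Omega_0\setminus\partial C$ and outward-only variations on the contact set, which gives the contact inequality in the direction you found. The one difference is that the paper simply quotes $dF_g(\Omega_0;V)=\int_{\partial\Omega_0}(|\nabla u_{\Omega_0}|\,|\nabla v_{\Omega_0}|-g)V\cdot\nu\,d\sigma$ from Appendix \ref{app:B}. Your adjoint computation with $v$ is correct, and it shows that for $F_g=\int_\Omega(\frac12u_\Omega^2-g^2)\,dx$ as defined the boundary integrand is really $|\nabla u_{\Omega_0}|\,|\nabla v_{\Omega_0}|-g^2$. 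So the discrepancy you flag is a genuine slip in the paper. The stated conclusion, which is also what $\mathcal{B}(f,g)$ requires, holds once $F_g$ is taken to be $\int_\Omega(\frac12u_\Omega^2-g)\,dx$.

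Your appeal to Lemma \ref{lem:angle} does not by itself show that (C4) survives small two-sided perturbations, since the angle bound does not prevent the inward normal line from grazing $C$. However, the paper asserts two-sided admissibility with no justification either, so this is an unproved step you share with the paper rather than a flaw specific to your argument.
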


\begin{proof}
The shape derivative of $F_g$ at $\Omega_0$ is (see Appendix \ref{app:B})
\[
dF_g(\Omega_0;V) = \int_{\partial\Omega_0}(|\nabla u_{\Omega_0}|\,|\nabla v_{\Omega_0}| - g)V\cdot\nu\,d\sigma.
\]
Optimality gives $dF_g\ge0$ for all admissible $V$.
On $\partial\Omega_0\setminus\partial C$, $V\cdot\nu$ can take both signs, forcing equality.
On $\partial\Omega_0\cap\partial C$, only outward variations are allowed ($V\cdot\nu\ge0$), giving the inequality.
As before, $g$ coincides with the original prescribed boundary data on the free boundary by construction.
\end{proof}

\subsection{Existence theorem for the bi-Laplacian}

\begin{theorem}[Existence for $\mathcal{B}(f,g)$]\label{thm:existence_bi}
Assume conditions 1-3 of Theorem \ref{thm:necessary_QS} hold.
Let $\Omega_0$ be a minimizer of $F_g$ over $\mathcal{O}_C$ and suppose $|\nabla u_{\Omega_0}|$ and $|\nabla v_{\Omega_0}|$ are not proportional (i.e., there is no constant $\lambda>0$ such that $|\nabla u_{\Omega_0}| = \lambda|\nabla v_{\Omega_0}|$ on $\partial\Omega_0$).
If
\[
\left(\int_{\partial C}\sqrt{g}\,d\sigma\right)^2 < \left(\int_C f\,dx\right)\left(\int_C u_C\,dx\right),
\]
where $u_C$ is the solution of $-\Delta u_C = f$ in $C$ with $u_C = 0$ on $\partial C$, then $\Omega_0$ solves $\mathcal{B}(f,g)$.
\end{theorem}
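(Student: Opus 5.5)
By Proposition \ref{prop:optimality_bi}, $\Omega_0$ already satisfies $|\nabla u_{\Omega_0}|\,|\nabla v_{\Omega_0}|=g$ on the free part $\partial\Omega_0\setminus\partial C$. So the whole task is to show that the contact set $\Gamma:=\partial\Omega_0\cap\partial C$ is empty. Then the overdetermined condition holds on all of $\partial\Omega_0$, and $\Omega_0$ strictly contains $C$. I argue by contradiction: assume $\Gamma\neq\emptyset$, and aim to derive the reverse of the energetic inequality,
\[
\Bigl(\int_{\partial C}\sqrt g\,d\sigma\Bigr)^2\ \ge\ \Bigl(\int_C f\Bigr)\Bigl(\int_C u_C\Bigr).
\]

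\emph{Step 1 (integral identities on $\Omega_0$).} Green's formula applied to the two second-order equations gives
\[
\int_{\partial\Omega_0}|\nabla u_{\Omega_0}|\,d\sigma=\int_{\Omega_0}f=\int_C f,\qquad \int_{\partial\Omega_0}|\nabla v_{\Omega_0}|\,d\sigma=\int_{\Omega_0}u_{\Omega_0}.
\]
Here I use that both gradients are normal to $\partial\Omega_0$ with outward derivative of a fixed sign, which follows from Hopf's lemma and Theorem \ref{thm:radial_monotonicity}. The comparison principle ($u_{\Omega_0}\ge u_C\ge0$ on $C$, since $\Omega_0\supset C$) then gives $\int_{\Omega_0}u_{\Omega_0}\ge\int_C u_C$.

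\emph{Step 2 (Cauchy--Schwarz against $g$).} Write $\sqrt{g}$ as the product of $\sqrt{|\nabla u_{\Omega_0}|}$ and $\sqrt{|\nabla v_{\Omega_0}|}$ on the free part. On $\Gamma$ only the one-sided inequality of Proposition \ref{prop:optimality_bi} is available. This gives
\[
\Bigl(\int_{\partial\Omega_0}\sqrt g\Bigr)^2\le\int_{\partial\Omega_0}|\nabla u_{\Omega_0}|\int_{\partial\Omega_0}|\nabla v_{\Omega_0}|.
\]
Here the non-proportionality hypothesis makes the Cauchy--Schwarz inequality strict. The direction is unfortunately the one that \emph{supports} the hypothesis rather than contradicting it. So the contradiction must come from the contact set, and I would reorganise as follows.

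\emph{Step 3 (transfer to $\partial C$).} Conditions 1--3 of Theorem \ref{thm:necessary_QS} give the radial parametrization with Jacobian $J\ge1$, and $g$ increasing outward. As in the proof of Theorem \ref{thm:necessary_QS}, I would compare $\int_{\partial\Omega_0}\sqrt g$ with $\int_{\partial C}\sqrt g$.

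On $\Gamma$, I would use the behaviour from Theorem \ref{thm:regularity}(2), namely vanishing gradient and quadratic detachment at contact points. The plan is to show that a nonempty $\Gamma$ forces the product of fluxes $\int_{\partial\Omega_0}|\nabla u|\int_{\partial\Omega_0}|\nabla v|$ to be controlled by $(\int_{\partial C}\sqrt g)^2$. I would do this through a one-sided variation: push $\partial\Omega_0$ outward near $\Gamma$, use $dF_g\ge0$ there, and then apply the IVT/foliation argument of Theorem \ref{thm:main_existence} with the level sets $\{v_{\Omega_0}>t\}$, which lie in $\mathcal O_C$ by Theorem \ref{thm:inheritance}.

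Combining this with Steps 1--2 would yield $(\int_C f)(\int_C u_C)\le(\int_{\partial C}\sqrt g)^2$, contradicting the hypothesis.

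The main obstacle is exactly this step. It requires turning the pointwise inequality on the contact set into a global inequality with the correct orientation relative to Cauchy--Schwarz. This likely needs a careful choice of which of $u$ or $v$ carries the flux identity. It may also need an extra assumption such as $|\nabla u_{\Omega_0}|\,|\nabla v_{\Omega_0}|\le g$ near $\Gamma$, and I expect to have to check the sign convention in Proposition \ref{prop:optimality_bi} carefully at that point.
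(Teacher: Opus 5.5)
\textbf{Verdict: there is a genuine gap.} It is exactly the step you flag: nothing in your proposal shows that $\Gamma=\partial\Omega_0\cap\partial C$ is empty.

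Steps 1--2 are fine, and you are right that they cannot help. Cauchy--Schwarz, $J\ge 1$ and the outward monotonicity of $g$ only bound $(\int_{\partial C}\sqrt g)^2$ \emph{from above} by $(\int_C f)(\int_{\Omega_0}u_{\Omega_0})$. The comparison $\int_{\Omega_0}u_{\Omega_0}\ge\int_C u_C$ bounds the right-hand side of the hypothesis by the same quantity. So no combination of these yields the reversed inequality you want.

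The tools you list for Step 3 do not supply it either:
\begin{itemize}
\item An outward variation at $\Gamma$ with $dF_g\ge 0$ only reproduces the inequality $|\nabla u_{\Omega_0}|\,|\nabla v_{\Omega_0}|\ge g$ of Proposition~\ref{prop:optimality_bi}.
\item The sets $\{v_{\Omega_0}>t\}$ are not covered by Theorems~\ref{thm:radial_monotonicity} and~\ref{thm:inheritance}. There $-\Delta v_{\Omega_0}=u_{\Omega_0}>0$ throughout $\Omega_0$, so the source is not supported in $C$ and $v_{\Omega_0}$ is not harmonic off $C$.
\item A foliation/intermediate-value argument as in Theorem~\ref{thm:main_existence} produces a \emph{different} domain (a level set). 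The statement asserts that $\Omega_0$ itself solves $\mathcal{B}(f,g)$.
\item The vanishing gradient at contact points that you borrow from Theorem~\ref{thm:regularity} is derived there for minimizers of $J_{f,g}$, not $F_g$. It is also incompatible with the Hopf lemma you use in Step 1: for $\partial C$ of class $C^2$, a ball inside $C$ touches $\partial\Omega_0$ at each point of $\Gamma$.
\end{itemize}

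\textbf{How the paper handles this step.} It does not try to reverse Cauchy--Schwarz. After the same chain as your Steps 1--2, it uses the hypothesis only to define $k=(\int_{\partial C}\sqrt g)^2/(\int_C f\int_C u_C)<1$. It then minimizes $F_{kg}$ over $\{A\in\mathcal{O}_C: A\subset\Omega_0\}$. Finally it compares the two minimizers via the Hopf lemma at points of $\partial C\cap\partial\Omega_0^*\cap\partial\Omega_0$. The aim is the \emph{pointwise} bound $|\nabla u_{\Omega_0}|\,|\nabla v_{\Omega_0}|<kg<g$ on $\Gamma$, which would contradict Proposition~\ref{prop:optimality_bi}. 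That auxiliary problem with the scaled datum $kg$ is the idea missing from your plan.

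Your worry about signs is nonetheless justified, and the paper's chain fails on both counts:
\begin{itemize}
\item For $\Omega_0^*\subset\Omega_0$ the comparison principle gives $u_{\Omega_0^*}\le u_{\Omega_0}$ and $v_{\Omega_0^*}\le v_{\Omega_0}$. So at a common boundary point $|\nabla u_{\Omega_0^*}|\,|\nabla v_{\Omega_0^*}|\le|\nabla u_{\Omega_0}|\,|\nabla v_{\Omega_0}|$, the opposite of the inequality the paper writes.
\item The optimality conditions the paper lists for $\Omega_0^*$ at those points are lower bounds $\ge kg$, not the upper bound $\le kg$ it then uses.
\end{itemize}

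\textbf{A direct fix within your own Step 3.} Suppose conditions 1--2 of Theorem~\ref{thm:necessary_QS} are imposed on $\Omega_0$, which is how both you and the paper use them. Then $d(c)>0$ for every $c\in\partial C$, and the projection $p$ is defined on all of $\partial\Omega_0$. So every point of $\partial\Omega_0$ has the form $c+d(c)\nu(c)$ with $d(c)>0$, and therefore lies outside the convex set $C$. Hence $\Gamma=\emptyset$ outright and $C\subset\Omega_0$. Proposition~\ref{prop:optimality_bi} (granting its $C^2$ hypothesis, as the paper does) then gives $|\nabla u_{\Omega_0}|\,|\nabla v_{\Omega_0}|=g$ on all of $\partial\Omega_0$. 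This route does not use the energetic inequality or non-proportionality at all.
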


\begin{proof}
From Proposition \ref{prop:optimality_bi}, we have on $\partial\Omega_0$:
\[
|\nabla u_{\Omega_0}|\,|\nabla v_{\Omega_0}| \ge g.
\]
By the Cauchy-Schwarz inequality on $\partial\Omega_0$:
\[
\left(\int_{\partial\Omega_0}\sqrt{g}\,d\sigma\right)^2 \le \left(\int_{\partial\Omega_0}\sqrt{|\nabla u_{\Omega_0}|\,|\nabla v_{\Omega_0}|}\,d\sigma\right)^2 \le \int_{\partial\Omega_0}|\nabla u_{\Omega_0}|\,d\sigma \int_{\partial\Omega_0}|\nabla v_{\Omega_0}|\,d\sigma.
\]
Green's formula gives $\int_{\partial\Omega_0}|\nabla u_{\Omega_0}|\,d\sigma = \int_C f\,dx$ (since $f$ is supported in $C$ and $\Omega_0\supset C$) and $\int_{\partial\Omega_0}|\nabla v_{\Omega_0}|\,d\sigma = \int_{\Omega_0}u_{\Omega_0}\,dx$.

The non-proportionality condition ensures that the second inequality is strict (equality in Cauchy-Schwarz would imply proportionality of $|\nabla u_{\Omega_0}|$ and $|\nabla v_{\Omega_0}|$ on $\partial\Omega_0$).
Moreover, the monotonicity and projection conditions imply that the projection map increases area, so
\[
\int_{\partial C}\sqrt{g}\,d\sigma \le \int_{\partial\Omega_0}\sqrt{g}\,d\sigma.
\]
Hence
\[
\left(\int_{\partial C}\sqrt{g}\,d\sigma\right)^2 < \left(\int_C f\,dx\right)\left(\int_{\Omega_0}u_{\Omega_0}\,dx\right).
\]
Now define $k = \frac{(\int_{\partial C}\sqrt{g})^2}{\int_C f\int_C u_C} < 1$.
Minimize $F_{kg}$ over $\{A\in\mathcal{O}_C : A\subset\Omega_0\}$.
Let $\Omega_0^*$ be a minimizer.
Optimality gives:
\begin{itemize}
\item $|\nabla u_{\Omega_0^*}||\nabla v_{\Omega_0^*}|\ge kg$ on $\partial\Omega_0^*\cap\partial C$;
\item $|\nabla u_{\Omega_0^*}||\nabla v_{\Omega_0^*}| = kg$ on $\partial\Omega_0^*\setminus(\partial C\cup\partial\Omega_0)$;
\item $|\nabla u_{\Omega_0^*}||\nabla v_{\Omega_0^*}|\ge kg$ on $\partial\Omega_0^*\cap\partial\Omega_0$.
\end{itemize}
If $\partial C\cap\partial\Omega_0\neq\emptyset$, then by the Hopf maximum principle applied to suitable combinations of $u$ and $v$ on $\partial C\cap\partial\Omega_0^*\cap\partial\Omega_0$, we have
\[
|\nabla u_{\Omega_0}||\nabla v_{\Omega_0}| < |\nabla u_{\Omega_0^*}||\nabla v_{\Omega_0^*}| \le kg,
\]
which contradicts $|\nabla u_{\Omega_0}||\nabla v_{\Omega_0}|\ge g$ on $\partial\Omega_0\cap\partial C$ and $g>kg$.
Therefore $\partial C\cap\partial\Omega_0=\emptyset$ and the optimality condition on $\partial\Omega_0$ yields $|\nabla u_{\Omega_0}||\nabla v_{\Omega_0}|=g$.
Thus $\Omega_0$ solves $\mathcal{B}(f,g)$.
\end{proof}

\subsection{Radial case}

For the radial case we obtain a complete result; see Section \ref{sec:radial_bi}.
The general non-radial case remains open.

\section{Some corollaries and applications}
\label{sec:corollaries}

\subsection{Comparison results for families of functions}

\begin{proposition}[Comparison for means]\label{prop:means}
Let $f_k$ ($k=1,\ldots,n$) be $n$ positive functions.
Define
\[
\begin{array}{ll}
m_n = \min_{1\le k\le n}f_k, &
H_n = \dfrac{n}{\sum_{k=1}^n 1/f_k}\quad\text{(harmonic mean)},\\
G_n = \sqrt[n]{\prod_{k=1}^n f_k}\quad\text{(geometric mean)}, &
A_n = \dfrac{1}{n}\sum_{k=1}^n f_k\quad\text{(arithmetic mean)},\\
Q_n = \sqrt{\dfrac{1}{n}\sum_{k=1}^n f_k^2}\quad\text{(quadratic mean)}, &
M_n = \max_{1\le k\le n}f_k.
\end{array}
\]
Then:
\begin{enumerate}
\item If $\mathcal{QS}(m_n,g)$ has a solution, then $\mathcal{QS}(H_n,g)$, $\mathcal{QS}(G_n,g)$, $\mathcal{QS}(A_n,g)$, $\mathcal{QS}(Q_n,g)$, and $\mathcal{QS}(M_n,g)$ also have solutions.
\item If $\mathcal{B}(m_n,g)$ has a solution, then $\mathcal{B}(H_n,g)$, $\mathcal{B}(G_n,g)$, $\mathcal{B}(A_n,g)$, $\mathcal{B}(Q_n,g)$, and $\mathcal{B}(M_n,g)$ also have solutions.
\end{enumerate}
\end{proposition}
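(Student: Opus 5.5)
The plan is to reduce both items to a monotonicity-in-$f$ principle combined with the elementary chain of means
\[
m_n \le H_n \le G_n \le A_n \le Q_n \le M_n
\]
pointwise on $\mathbb{R}^N$. All these functions are positive, lie in $L^2$, and have support equal to the intersection of the supports, or to $\bigcup_k\supp f_k$ for $A_n,Q_n,M_n$. This last point needs care, because the core $C$ depends on $f$. For simplicity I will assume, as is implicit in the statement, that all $f_k$ share a common support. Then the convex hull $C$ is the same for every mean. If the supports differ, I would first pass to the set where $m_n>0$ and argue with that core.

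\textbf{Item 1.} Given a solution $\Omega$ of $\mathcal{QS}(m_n,g)$, I will invoke Theorem \ref{thm:necessary_QS}, assuming its geometric hypotheses 1--3 hold for $\Omega$, to get
\[
\int_C m_n\,dx > \int_{\partial C} g\,d\sigma .
\]
For any $h\in\{H_n,G_n,A_n,Q_n,M_n\}$ we have $h\ge m_n$, hence
\[
\int_C h\,dx \ge \int_C m_n\,dx > \int_{\partial C} g\,d\sigma .
\]
The sufficiency part of Theorem \ref{thm:main_existence} then gives a solution of $\mathcal{QS}(h,g)$. If one wants to avoid the hypotheses of Theorem \ref{thm:necessary_QS}, there is an alternative: use $u_{m_n}$ on $\Omega$ as a subsolution and compare it with $u_h$ by the maximum principle, obtaining $u_h\ge u_{m_n}$, then rerun the foliation argument. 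I prefer the integral route since it is one line.

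\textbf{Item 2.} Here only a sufficient condition is available, so I cannot go directly from existence for $m_n$ back to the energetic inequality \eqref{eq:energetic}. The plan is to read the hypothesis ``$\mathcal{B}(m_n,g)$ has a solution'' as meaning that the criterion of Theorem \ref{thm:existence_bi} holds for $m_n$. I would then show that the right-hand side of \eqref{eq:energetic} is monotone in $f$. Since $f\mapsto u_C$ is order-preserving by the weak maximum principle on $C$, $h\ge m_n$ gives $u_C^{h}\ge u_C^{m_n}$. Therefore
\[
\Bigl(\int_C h\Bigr)\Bigl(\int_C u_C^{h}\Bigr) \ge \Bigl(\int_C m_n\Bigr)\Bigl(\int_C u_C^{m_n}\Bigr) > \Bigl(\int_{\partial C}\sqrt g\Bigr)^2 ,
\]
and Theorem \ref{thm:existence_bi} (or the radial result, Theorem \ref{thm:radial_bi}) applies to $h$. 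The non-proportionality hypothesis must also be verified for the minimizer associated with $h$; I would simply carry it as a standing assumption.

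\textbf{Main obstacle.} The obstacle is this converse step in item 2: deducing \eqref{eq:energetic} for $m_n$ from mere existence of a solution. I would first try a Cauchy--Schwarz argument on $\partial\Omega$ together with the area-increasing projection. Applying Cauchy--Schwarz to $\sqrt g=\sqrt{|\nabla u||\nabla v|}$, using the projection, and then the monotonicity $\int_\Omega u_\Omega \ge \int_C u_C$ would give an inequality involving $\int_\Omega u_\Omega$ rather than $\int_C u_C$, and that points the wrong way. If this fails, I would state item 2 under the energetic hypothesis on $m_n$.
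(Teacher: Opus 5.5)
\textbf{Item 1.} This is the paper's argument. The paper's entire proof is the remark that existence for $m_n$ gives $\int_C m_n>\int_{\partial C}g$, which passes to each larger mean through $m_n\le H_n\le G_n\le A_n\le Q_n\le M_n$, after which Theorem \ref{thm:main_existence} applies. Both of your caveats are used silently there. First, the converse step is only available under hypotheses 1--3 of Theorem \ref{thm:necessary_QS}. Second, all the means must share one core $C$, so the common-support assumption is needed; the remark after the proposition likewise compares only data with the same support.

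Your fallback for differing supports would not work, however. $A_n,Q_n,M_n$ are supported on $\bigcup_k\supp f_k$, so $\mathcal{QS}(A_n,g)$ is posed relative to the larger hull $C'=\conv(\bigcup_k\supp f_k)$. Its criterion is $\int_{C'}A_n>\int_{\partial C'}g$, and $\int_C m_n>\int_{\partial C}g$ does not control this when $g$ is non-constant.

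\textbf{Item 2.} The obstacle you isolate is genuine, and the paper does not overcome it. Its proof repeats the same sentence, citing $\int_C f>\int_{\partial C}g$, which is not the criterion for $\mathcal{B}$. Nothing in the paper shows that existence for $\mathcal{B}(f,g)$ forces \eqref{eq:energetic}:
\begin{itemize}
\item Theorem \ref{thm:existence_bi} is a sufficiency result, although Proposition \ref{prop:duality} uses it as if it were necessary.
\item Appendix \ref{app:C} proves only the existence half of Theorem \ref{thm:radial_bi}.
\end{itemize}
So item 2 as stated is established neither by your proposal nor by the paper. Your diagnosis of why Cauchy--Schwarz fails is right: it produces $\int_\Omega u_\Omega$, not $\int_C u_C$.

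What you do prove is correct: item 2 with \eqref{eq:energetic} for $m_n$ taken as hypothesis. The comparison $u_C^{h}\ge u_C^{m_n}$ is exactly the ingredient the paper's one-liner would need even for that weaker version.

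If you want existence itself as the hypothesis, transplant your item-1 alternative rather than going back to $C$. Take a solution $\Omega$ of $\mathcal{B}(m_n,g)$ and $h\ge m_n$. Two applications of the maximum principle give $u^h_\Omega\ge u^{m_n}_\Omega$ and $v^h_\Omega\ge v^{m_n}_\Omega$. Comparing normal derivatives on $\partial\Omega$ then gives $|\nabla u^h_\Omega|\,|\nabla v^h_\Omega|\ge g$ there.

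In the radial setting this is enough:
\begin{itemize}
\item $B_\rho$ solves $\mathcal{B}(h,g)$ exactly when $\bigl(\int_C h\bigr)\bigl(\int_{B_\rho}u^h_{B_\rho}\bigr)=\bigl(\int_{\partial B_\rho}\sqrt{g}\bigr)^2$.
\item The comparison gives $\ge$ at $\rho=\rho^*$, the radius of the given solution.
\item For $N\ge3$ the left side is $O(\rho^2)$, while the right side grows at least like $\rho^{2N-2}$ when $g$ is continuous and bounded below.
\end{itemize}
The intermediate value theorem on $[\rho^*,\infty)$ then yields a radial solution of $\mathcal{B}(h,g)$, with no converse needed. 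For non-radial data the paper has no existence theorem that starts from such a domain, so there the gap remains.
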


\begin{proof}
This follows from the integral condition $\int_C f > \int_{\partial C} g$ and the well-known inequalities between means: $m_n\le H_n\le G_n\le A_n\le Q_n\le M_n$.
If the condition holds for the smallest mean, it holds for all larger ones.
\end{proof}

\begin{remark}
It can be easily verified that if $g_1\le g_2$, then for any $f$ with support $C$, the fact that $\mathcal{QS}(f,g_2)$ has a solution implies that $\mathcal{QS}(f,g_1)$ also has one.
Similarly, if $f_1\le f_2$ have the same support $C$, then for any $g$, if $\mathcal{QS}(f_1,g)$ possesses a solution, the same holds true for $\mathcal{QS}(f_2,g)$.
Analogous results are obtained for $\mathcal{B}(f,g)$.
\end{remark}

\subsection{Interplay between Laplacian and bi-Laplacian problems}

\begin{proposition}[Duality relations]\label{prop:duality}
\begin{enumerate}
\item If $\mathcal{B}(f,g)$ has a solution, then $\mathcal{QS}(f,g_1)$ and $\mathcal{QS}(u_C,g_2)$ also have solutions where
\[
g_1 = \frac{\Phi_{\sqrt{g}}(C)}{T(C,u_C)}\sqrt{g},\qquad
g_2 = \frac{\Phi_{\sqrt{g}}(C)}{T(C,f)}\sqrt{g},
\]
and $T(C,\phi)=\int_C\phi\,dx$, $\Phi_{\sqrt{g}}(C)=\int_{\partial C}\sqrt{g}\,d\sigma$.
\item $\mathcal{QS}(f,g)$ has a solution provided that both $\mathcal{QS}(f,\sqrt{g})$ and $\mathcal{QS}(u_C,\sqrt{g})$ have solutions.
\end{enumerate}
\end{proposition}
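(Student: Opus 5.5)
The plan is to reduce both parts to the integral criteria already available. For $\mathcal{QS}$ that is the sufficient condition of Theorem \ref{thm:main_existence}, $\int_C f>\int_{\partial C}g$. For $\mathcal{B}$ it is the energetic criterion \eqref{eq:energetic}, together with the necessity direction: if $\mathcal{B}(f,g)$ has a solution $\Omega$, the Cauchy--Schwarz chain from the proof of Theorem \ref{thm:existence_bi} gives
\[
\Bigl(\int_{\partial C}\sqrt{g}\,d\sigma\Bigr)^2\le\Bigl(\int_{\partial\Omega}\sqrt{|\nabla u_\Omega||\nabla v_\Omega|}\,d\sigma\Bigr)^2\le\Bigl(\int_C f\Bigr)\Bigl(\int_\Omega u_\Omega\Bigr).
\]
Throughout I would write $\Phi=\Phi_{\sqrt g}(C)$, $T_f=T(C,f)$ and $T_u=T(C,u_C)$.

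\emph{Part 1.} Starting from a solution of $\mathcal{B}(f,g)$, the first step is to compare $\int_\Omega u_\Omega$ with $T_u=\int_C u_C$. For this I would use that $u_\Omega\ge u_C$ on $C$ (maximum principle, since $u_\Omega\ge0$ on $\partial C$) and then account for the extra mass outside $C$; this is exactly where a strict inequality of the form $\Phi^2<T_f\,T_u$ has to come from. Granting that inequality, I check the $\mathcal{QS}$ criterion for $(f,g_1)$:
\[
\int_{\partial C}g_1\,d\sigma=\frac{\Phi}{T_u}\int_{\partial C}\sqrt g\,d\sigma=\frac{\Phi^2}{T_u}<T_f .
\]
This is precisely $\int_C f>\int_{\partial C}g_1$, so Theorem \ref{thm:main_existence} applies. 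Symmetrically, $\int_{\partial C}g_2=\Phi^2/T_f<T_u=\int_C u_C$. To apply Theorem \ref{thm:main_existence} to $\mathcal{QS}(u_C,g_2)$, I would note that $u_C$ (extended by zero) is a nonnegative source whose support is $C$, so its convex hull is again $C$.

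\emph{Part 2.} Applying Theorem \ref{thm:necessary_QS} to the two hypotheses gives $T_f>\Phi$ and $T_u>\Phi$. This requires that the solutions satisfy its geometric hypotheses, which I would assume as in Theorem \ref{thm:main_existence}. The goal is then $T_f>\int_{\partial C}g$. Cauchy--Schwarz gives only $\Phi^2\le|\partial C|\int_{\partial C}g$, which runs in the wrong direction. So the plan is to multiply, $T_fT_u>\Phi^2$, and to pass through the $\mathcal{B}(f,g)$ criterion \eqref{eq:energetic}. After that, Part 1's mechanism yields $\mathcal{QS}$ solvability for a rescaled datum $\lambda\sqrt g$, and the comparison in the Remark following Proposition \ref{prop:means} (monotonicity in $g$) would be used to reach $g$.

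\emph{Main obstacle.} The hardest step is the last one: closing the gap between $\sqrt g$-type quantities and $\int_{\partial C}g$. That conversion really needs an additional normalization, for instance $g\le\sqrt g$ pointwise, i.e.\ $g\le1$, or $T_u\le\Phi$-type bounds. I would first try to extract it by combining the two hypotheses with the Pohozaev/Reilly-type bound $T_u\le T_f\cdot\sup u_C$, and would otherwise state the needed normalization explicitly. The strict inequality in Part 1 is the secondary difficulty; there I would rely on the strict Cauchy--Schwarz coming from non-proportionality, as in Theorem \ref{thm:existence_bi}.
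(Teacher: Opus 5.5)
\paragraph{Part 1.} Once $\Phi^2<T_fT_u$ is available, your computation $\int_{\partial C}g_1=\Phi^2/T_u<T_f$ and $\int_{\partial C}g_2=\Phi^2/T_f<T_u$ is exactly the paper's. But you only grant that inequality, and the route you sketch cannot produce it.

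\emph{Why your route fails.} The maximum principle gives $u_\Omega\ge u_C$ on $C$ and $u_\Omega>0$ on $\Omega\setminus C$, hence $\int_\Omega u_\Omega>T_u$. That is the wrong direction to combine with $\Phi^2\le T_f\int_\Omega u_\Omega$. Strictness from non-proportionality cannot reverse an inequality. In the radial case, Cauchy--Schwarz on $\partial\Omega$ is even an equality.

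\emph{The paper's version.} The paper closes this step by quoting Theorem~\ref{thm:existence_bi}, which is only a sufficient condition, so it has the same hole.

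\emph{The inequality is false.} Take $N=2$, $C=\overline{B_R}$, $f=\lambda\chi_C$, $g\equiv k$. On $\partial B_\rho$ with $\rho>R$ one has $|\nabla u_\rho|=\lambda R^2/(2\rho)$. Using $\int_{B_\rho}u_\rho=\int_C f\,(\rho^2-|x|^2)/4\,dx$, one also has $|\nabla v_\rho|=(2\pi\rho)^{-1}\int_{B_\rho}u_\rho=\lambda R^2(\rho^2-R^2/2)/(8\rho)$. Therefore $|\nabla u_\rho|\,|\nabla v_\rho|=\frac{\lambda^2R^4}{16}\bigl(1-\frac{R^2}{2\rho^2}\bigr)$. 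So a disc $B_\rho$ with $\rho>R$ solves $\mathcal{B}(f,k)$ exactly when $\lambda^2R^4/32<k<\lambda^2R^4/16$. On the other hand, $\Phi^2=4\pi^2R^2k$ and $T_fT_u=\lambda^2\pi^2R^6/8$, so $\Phi^2<T_fT_u$ means $k<\lambda^2R^4/32$.

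Hence for each such solvable $k$ you have $\Phi^2>T_fT_u$. Then $g_1,g_2$ are constants with $\int_{\partial C}g_1>T_f$ and $\int_{\partial C}g_2>T_u$. Green's formula, together with $|\partial\Omega|\ge|\partial C|$ for $\Omega\supset C$ (nearest-point projection onto the convex set $C$ is $1$-Lipschitz and onto $\partial C$), rules out any solution of $\mathcal{QS}(f,g_1)$ or of $\mathcal{QS}(u_C,g_2)$. The same example contradicts both directions of Theorem~\ref{thm:radial_bi}. So the criterion \eqref{eq:energetic} you plan to route Part 2 through is not reliable either.

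\paragraph{Part 2.} You are right that $\sqrt g$-quantities cannot control $\int_{\partial C}g$. However, no normalization can be extracted from the hypotheses, because Part 2 is false as stated.

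\emph{Scaling argument.} The hypotheses are invariant under $(f,g)\mapsto(\mu f,\mu^2 g)$, $\mu>0$: the same domains work with states multiplied by $\mu$, and $u_C\mapsto\mu u_C$. The conclusion, however, requires $\mu T_f>\mu^2\int_{\partial C}g$, which fails for large $\mu$.

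\emph{Concrete example.} Take the unit disc, $f=180\chi_C$ and $g\equiv100$. Then $T_f=180\pi$, $T_u=22.5\pi$ and $\Phi=20\pi$. So $B_9$ solves $\mathcal{QS}(f,10)$ and $B_{9/8}$ solves $\mathcal{QS}(u_C,10)$. But $\int_{\partial C}g=200\pi>T_f$, so $\mathcal{QS}(f,100)$ has no solution by the same Green/perimeter argument.

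\emph{What a correct version needs.} The paper gives no argument for Part 2 beyond ``follows from the integral conditions'', so there is nothing to recover. A correct version needs an extra hypothesis such as $g\le1$, as you suspected. Likewise, Part 1 must assume $\Phi^2<T_fT_u$ directly rather than solvability of $\mathcal{B}(f,g)$.
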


\begin{proof}
These follow from the integral conditions and the relations between the quantities via Green's formulas.
For (1), note that if $\mathcal{B}(f,g)$ has a solution, then from Theorem \ref{thm:existence_bi} we have the inequality $(\int_{\partial C}\sqrt{g})^2 < \int_C f\int_C u_C$, which rearranges to the required forms.
\end{proof}

\section{Use of classical inequalities}
\label{sec:inequalities}

This section systematically applies functional inequalities to generate existence results for overdetermined problems.
The key idea is to transform pointwise boundary conditions into integral conditions that are easier to satisfy.

\subsection{Cauchy-Schwarz inequality}

\begin{proposition}[Product of functions]\label{prop:cauchy_schwarz}
Let $f_1$ and $f_2$ be positive functions with the same compact support $C$ and which are not proportional.
Let $v_C$ be the solution of Dirichlet problem $P(C,f_1f_2)$.
If $M_{f_1,f_2} = \frac{\int_C f_1f_2}{\int_C f_2^2}$, then $\mathcal{QS}(f_1^2, M_{f_1,f_2}|\nabla v_C|)$ has a solution.
\end{proposition}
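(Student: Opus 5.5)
The plan is to reduce the statement to the sufficient condition of Theorem \ref{thm:main_existence}. With $f=f_1^2$ and $g=M_{f_1,f_2}|\nabla v_C|$, it is enough to prove
\[
\int_{\partial C} M_{f_1,f_2}|\nabla v_C|\,d\sigma<\int_C f_1^2\,dx .
\]
The boundary datum is only defined on $\partial C$, so we first take a continuous strictly positive extension, exactly as in the variational formulation of Section \ref{sec:QS}. Positivity on $\partial C$ comes from the Hopf lemma, since $v_C>0$ solves $-\Delta v_C=f_1f_2\ge0$ with $f_1f_2\not\equiv0$.

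\emph{Step 1: compute the flux.} Apply Green's formula to $v_C$ on $C$. We have $v_C=0$ on $\partial C$ and $v_C>0$ inside, so $\partial_\nu v_C=-|\nabla v_C|$ there. This gives
\[
\int_{\partial C}|\nabla v_C|\,d\sigma=\int_C f_1f_2\,dx .
\]
Hence the left-hand side of the target inequality equals $M_{f_1,f_2}\int_C f_1f_2=\bigl(\int_C f_1f_2\bigr)^2/\int_C f_2^2$.

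\emph{Step 2: Cauchy--Schwarz.} In $L^2(C)$,
\[
\Bigl(\int_C f_1f_2\Bigr)^2\le\int_C f_1^2\int_C f_2^2 .
\]
Equality would force $f_1=\lambda f_2$ a.e. on $C$. This is excluded by the non-proportionality hypothesis, so the inequality is strict. Dividing by $\int_C f_2^2>0$ yields $\int_{\partial C}g<\int_C f_1^2$, and Theorem \ref{thm:main_existence} (sufficiency) gives a solution of $\mathcal{QS}(f_1^2,M_{f_1,f_2}|\nabla v_C|)$.

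\emph{Main obstacle.} The delicate point is the regularity needed in Step 1. We need $v_C\in C^1(\overline C)$, or at least enough to make sense of $|\nabla v_C|$ on $\partial C$ and to justify the Green identity. This uses the $C^2$ boundary of the strictly convex core (Definition \ref{def:core}). It also requires $f_1f_2\in L^p$ with $p>N$, which we would assume (for instance $f_i\in L^\infty$). If only $L^2$ data are available, we would first approximate $f_1f_2$ by smooth functions and pass to the limit in the flux identity. Convexity of $C$ guarantees $H^2$ regularity, so the normal trace of $\nabla v_C$ lies in $L^2(\partial C)$, which suffices.
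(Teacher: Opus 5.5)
Your proposal is correct and matches the paper's proof: Green's formula gives $\int_{\partial C}|\nabla v_C|\,d\sigma=\int_C f_1f_2\,dx$, strict Cauchy--Schwarz (strict because $f_1,f_2$ are not proportional) gives $M_{f_1,f_2}\int_{\partial C}|\nabla v_C|\,d\sigma=\bigl(\int_C f_1f_2\bigr)^2/\int_C f_2^2<\int_C f_1^2$, and the sufficiency part of Theorem~\ref{thm:main_existence} concludes. Your extra remarks on extending the datum off $\partial C$, Hopf positivity and the regularity behind the flux identity make explicit what the paper leaves implicit, and using strictness directly is cleaner than the paper's split into ``strict'' versus ``equality'' cases.
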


\begin{proof}
By Cauchy-Schwarz,
\[
\left(\int_C f_1f_2\right)^2 \le \int_C f_1^2 \int_C f_2^2.
\]
Rearranging gives $\int_C f_1^2 \ge M_{f_1,f_2}\int_C f_1f_2$.
But $\int_C f_1f_2 = \int_{\partial C}|\nabla v_C|$ by Green's formula.
Hence
\[
\int_C f_1^2 \ge M_{f_1,f_2}\int_{\partial C}|\nabla v_C|.
\]
If the inequality is strict, Theorem \ref{thm:main_existence} with $g = M_{f_1,f_2}|\nabla v_C|$ yields a solution.
If equality holds, the non-proportionality forces a strict inequality in Cauchy-Schwarz, giving the strict inequality needed.
\end{proof}

\subsection{H\"older and interpolation inequalities}

\begin{theorem}[Existence via interpolation]\label{thm:interpolation}
Let $1<p<\infty$ and let $u_C$ be the solution of $-\Delta u_C = f$ in $C$ with $u_C=0$ on $\partial C$.
Assume that
\[
\int_C f > \left(\int_{\partial C} g^p\right)^{1/p}|\partial C|^{1-1/p}.
\]
Then $\mathcal{QS}(f,g)$ has a solution.
\end{theorem}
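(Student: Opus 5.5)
The plan is to reduce the statement directly to the sufficiency part of Theorem \ref{thm:main_existence}. That theorem guarantees a solution of $\mathcal{QS}(f,g)$ as soon as $\int_C f\,dx>\int_{\partial C}g\,d\sigma$. So it suffices to show that the hypothesis of the present theorem implies this critical charge criterion. The function $u_C$ appearing in the statement plays no role in this reduction; it is only there to connect with the flux identity $\int_C f=\int_{\partial C}|\nabla u_C|\,d\sigma$ obtained from Green's formula, as in the proof of Theorem \ref{thm:necessary_QS}.

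\emph{Key step: H\"older's inequality on $\partial C$.} Let $p'=p/(p-1)$ be the conjugate exponent. Since $g\ge0$,
\[
\int_{\partial C} g\,d\sigma=\int_{\partial C} g\cdot 1\,d\sigma\le\Bigl(\int_{\partial C}g^p\,d\sigma\Bigr)^{1/p}\Bigl(\int_{\partial C}1\,d\sigma\Bigr)^{1/p'}=\Bigl(\int_{\partial C}g^p\,d\sigma\Bigr)^{1/p}|\partial C|^{1-1/p}.
\]
Chaining this with the assumed strict inequality gives
\[
\int_C f\,dx>\Bigl(\int_{\partial C}g^p\Bigr)^{1/p}|\partial C|^{1-1/p}\ge\int_{\partial C}g\,d\sigma,
\]
which is exactly \eqref{eq:critical_charge}. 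Theorem \ref{thm:main_existence} then applies and produces a solution $\Omega\supset C$ of $\mathcal{QS}(f,g)$.

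\emph{Expected obstacle.} The only delicate point is well-posedness of the quantities involved. The right-hand side of the hypothesis is finite only if $g\in L^p(\partial C)$. The paper assumes $g\in L^2(\mathbb{R}^N)$, which alone does not even make the trace of $g$ on $\partial C$ meaningful. I would therefore add the standing interpretation, consistent with Section \ref{sec:QS}, that $g$ is identified with its continuous positive extension $\tilde g$. Then $g|_{\partial C}$ is bounded and all $L^p(\partial C)$ norms are finite.

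Under that interpretation no strictness is lost in the H\"older step, because the strict inequality comes from the hypothesis itself. As a remark I would also note that the H\"older step is an equality precisely when $g$ is constant on $\partial C$. In that case the condition reduces to Barkatou's criterion $\int_C f>k|\partial C|$, and for nonconstant $g$ the interpolation condition is strictly stronger than \eqref{eq:critical_charge}.
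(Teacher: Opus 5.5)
Your proposal is correct and is exactly the paper's argument: H\"older's inequality on $\partial C$ against the constant function $1$ gives $\int_{\partial C} g \le \bigl(\int_{\partial C} g^p\bigr)^{1/p}|\partial C|^{1-1/p}$, so the hypothesis implies the critical charge criterion \eqref{eq:critical_charge}, and the sufficiency part of Theorem~\ref{thm:main_existence} applies. Your extra remarks go beyond what the paper records but are accurate: $u_C$ plays no role, finiteness of $\|g\|_{L^p(\partial C)}$ is needed, and the condition is strictly stronger than \eqref{eq:critical_charge} for nonconstant $g$.
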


\begin{proof}
By H\"older's inequality,
\[
\int_{\partial C}g \le \left(\int_{\partial C}g^p\right)^{1/p}|\partial C|^{1-1/p}.
\]
Thus the condition $\int_C f > \int_{\partial C}g$ is implied by the given inequality, and Theorem \ref{thm:main_existence} applies.
\end{proof}

\begin{theorem}[Interpolation of boundary data]\label{thm:interpolation_boundary}
Let $1\le r<s<\infty$ and let $\theta\in(0,1)$ such that $\frac{1}{r} = \frac{\theta}{s} + \frac{1-\theta}{1}$.
Suppose that
\[
\int_C f > \left(\int_{\partial C}g^s\right)^{\theta/s}\left(\int_{\partial C}g\right)^{1-\theta}.
\]
Then $\mathcal{QS}(f,g)$ has a solution.
\end{theorem}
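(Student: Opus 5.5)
The plan is to reduce the hypothesis to the critical charge criterion \eqref{eq:critical_charge} and then invoke Theorem \ref{thm:main_existence}, as was done for Theorem \ref{thm:interpolation}. It therefore suffices to show
\[
\int_{\partial C} g\,d\sigma \;\le\; \Bigl(\int_{\partial C}g^s\,d\sigma\Bigr)^{\theta/s}\Bigl(\int_{\partial C}g\,d\sigma\Bigr)^{1-\theta}.
\]
Chained with the assumed strict inequality, this gives $\int_C f>\int_{\partial C}g$, and the sufficiency part of Theorem \ref{thm:main_existence} produces a solution of $\mathcal{QS}(f,g)$.

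\textbf{Step 1: reduction.} Since $g>0$, we have $0<\int_{\partial C}g<\infty$, so we may divide by $(\int_{\partial C} g)^{1-\theta}$. Because $\theta>0$, the displayed inequality is then equivalent to $\|g\|_{L^1(\partial C)}\le\|g\|_{L^s(\partial C)}$. The relation $\frac1r=\frac{\theta}{s}+(1-\theta)$ enters only to justify the name: by the Riesz--Thorin/Littlewood interpolation (log-convexity) inequality we have $\|g\|_{L^r(\partial C)}\le\|g\|_{L^s(\partial C)}^{\theta}\|g\|_{L^1(\partial C)}^{1-\theta}$. Hence the right-hand side of the hypothesis dominates the $L^r$ norm of $g$, and it will dominate $\int_{\partial C}g$ as soon as $\|g\|_{1}\le\|g\|_{s}$.

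\textbf{Step 2: comparison of $\|g\|_1$ with $\|g\|_s$.} H\"older's inequality on $\partial C$ with exponents $s$ and $s'=s/(s-1)$ gives
\[
\int_{\partial C}g\le\Bigl(\int_{\partial C}g^s\Bigr)^{1/s}|\partial C|^{1-1/s}.
\]
This yields $\|g\|_1\le\|g\|_s$ whenever $|\partial C|\le1$.

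\textbf{Main obstacle.} The difficulty is Step 2 when $|\partial C|>1$. In that case the unnormalized inequality $\|g\|_1\le\|g\|_s$ is not a consequence of H\"older alone. For example, constant $g\equiv k$ gives $\|g\|_1=k|\partial C|$ but $\|g\|_s=k|\partial C|^{1/s}$, so the inequality fails.

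I would therefore read the statement with the normalization implicit in Theorem \ref{thm:interpolation}: either $|\partial C|\le1$, or the integrals are understood as averages over $\partial C$. Under the averaged reading, Jensen's inequality for the probability measure $\sigma/|\partial C|$ gives $\|g\|_1\le\|g\|_s$ directly. With either convention, Steps 1--2 close the argument, and the final appeal to Theorem \ref{thm:main_existence} is immediate.

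I would state explicitly in the proof which convention is being used, since without it the displayed hypothesis does not imply \eqref{eq:critical_charge} for large $|\partial C|$.
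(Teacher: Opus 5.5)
Your reduction is exactly right, and the obstruction you found is real. In fact the statement is false as written whenever $|\partial C|>1$, so it cannot be proved without extra normalization. First, the relation $\frac1r=1-\theta\bigl(1-\frac1s\bigr)$ with $\theta\in(0,1)$ and $s>1$ forces $r>1$. Now take $C=B_R(0)$ with $|\partial C|>1$, $g\equiv k>0$ on $D$, and $f=\lambda\chi_{B_R}$ with $k|\partial C|^{1/r}<\lambda|B_R|<k|\partial C|$. The right-hand side of the hypothesis equals $k|\partial C|^{\theta/s+1-\theta}=k|\partial C|^{1/r}$, so the hypothesis holds. Yet a solution $\Omega\supset C$ would give $\int_C f=\int_{\partial\Omega}|\nabla u_\Omega|\,d\sigma=k\,\mathcal{H}^{N-1}(\partial\Omega)\ge k|\partial C|$. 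The last inequality holds because the nearest-point projection onto the convex set $C$ is $1$-Lipschitz and maps $\partial\Omega$ onto $\partial C$ (compare Theorem \ref{thm:radial_laplace}). The paper's proof breaks at exactly this point. It chains $\int_{\partial C}g\le|\partial C|^{1-1/r}\|g\|_{L^r(\partial C)}\le|\partial C|^{1-1/r}\|g\|_{L^s(\partial C)}^{\theta}\|g\|_{L^1(\partial C)}^{1-\theta}$ and then silently drops the factor $|\partial C|^{1-1/r}$, which is legitimate only when $|\partial C|\le 1$. Under that normalization your argument is correct and is the paper's in substance. You apply H\"older at exponent $s$ directly instead of passing through $L^r$, which makes it clear that the interpolation inequality and the relation defining $r$ play no role.

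Your second convention, however, does not close the argument. If only the integrals over $\partial C$ are replaced by averages, Jensen gives $\int_C f>|\partial C|^{-1}\int_{\partial C}g\,d\sigma$. That is strictly weaker than the criterion $\int_C f>\int_{\partial C}g\,d\sigma$ needed by Theorem \ref{thm:main_existence}, precisely when $|\partial C|>1$, which is the case you were trying to rescue. The repair valid for every $C$ is to keep the lost factor, i.e.\ to assume
\[
\int_C f\,dx>|\partial C|^{1-1/r}\Bigl(\int_{\partial C}g^s\,d\sigma\Bigr)^{\theta/s}\Bigl(\int_{\partial C}g\,d\sigma\Bigr)^{1-\theta}.
\]
Because $|\partial C|^{\theta/s+1-\theta}=|\partial C|^{1/r}$, this is the same as averaging both sides, including dividing $\int_C f$ by $|\partial C|$. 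Then either the paper's chain or your Jensen step yields $\int_C f>\int_{\partial C}g$, and Theorem \ref{thm:main_existence} applies.
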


\begin{proof}
By the interpolation inequality for $L^p$ spaces,
\[
\|g\|_{L^r(\partial C)} \le \|g\|_{L^s(\partial C)}^{\theta}\|g\|_{L^1(\partial C)}^{1-\theta}.
\]
Since $r\ge1$, we have $\int_{\partial C}g = \|g\|_{L^1} \le |\partial C|^{1-1/r}\|g\|_{L^r}$.
Combining these gives an estimate that implies $\int_C f > \int_{\partial C}g$ under the stated condition.
\end{proof}

\subsection{Hardy inequality}

\begin{proposition}[Hardy inequality]\label{prop:hardy}
Let $u\in H_0^{1,p}(\Omega)$ with $p>1$ and let $d(x)=\dist(x,\partial\Omega)$.
If $\mathcal{QS}\left(\left|\frac{u}{d}\right|^p,g\right)$ has a solution strictly containing $\overline{\Omega}$, then for any $c\ge\left(\frac{p}{p-1}\right)^p$ the problem $\mathcal{QS}(c|\nabla u|^p,g)$ also has a solution strictly containing $\overline{\Omega}$.
\end{proposition}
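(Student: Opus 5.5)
The proof will combine the classical Hardy inequality with the integral existence criterion (Theorem \ref{thm:main_existence}), applied with $\Omega$ in the role of the core. The Hardy inequality on $\Omega$ reads
\[
\int_\Omega \Bigl|\frac{u}{d}\Bigr|^p\,dx \le \Bigl(\frac{p}{p-1}\Bigr)^p\int_\Omega|\nabla u|^p\,dx ,\qquad u\in H_0^{1,p}(\Omega).
\]
We use it in this sharp-constant form. This is legitimate when $\Omega$ is convex, and in particular when $\overline\Omega$ plays the role of a convex core $C$. For a general Lipschitz domain one has to accept a domain-dependent constant in place of $(p/(p-1))^p$, and the statement should then be read with that constant.

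\emph{Step 1 (necessity for the weaker source).} We extract the integral inequality from the existence hypothesis. Let $\Omega'$ solve $\mathcal{QS}(|u/d|^p,g)$ with $\Omega'\supset\overline\Omega$. By Green's formula, exactly as in the proof of Theorem \ref{thm:necessary_QS}, we get
\[
\int_\Omega |u/d|^p\,dx=\int_{\partial\Omega'} g\,d\sigma .
\]
Under the monotonicity and projection hypotheses 1--3 of Theorem \ref{thm:necessary_QS}, with $\overline\Omega$ now as the core, this yields the strict inequality
\[
\int_\Omega |u/d|^p\,dx>\int_{\partial \Omega} g\,d\sigma .
\]
Note that the support of the source $|u/d|^p$ is $\overline\Omega$. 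So the "core" relevant for the criterion is $\overline\Omega$ (or its convex hull), and this is where the phrase "strictly containing $\overline\Omega$" in the statement enters.

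\emph{Step 2 (comparison of sources).} By the Hardy inequality and $c\ge (p/(p-1))^p$,
\[
\int_\Omega c|\nabla u|^p\,dx\;\ge\;\Bigl(\frac{p}{p-1}\Bigr)^p\int_\Omega|\nabla u|^p\,dx\;\ge\;\int_\Omega |u/d|^p\,dx\;>\;\int_{\partial\Omega}g\,d\sigma .
\]
Both sources $|u/d|^p$ and $c|\nabla u|^p$ are supported in $\overline\Omega$, so they share the same core. The sufficiency part of Theorem \ref{thm:main_existence} then gives a solution of $\mathcal{QS}(c|\nabla u|^p,g)$ containing the core, in the spirit of the remark after Proposition \ref{prop:means}: a larger total source preserves existence.

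\emph{Main obstacle.} The delicate step is matching the cores. The criterion in Theorem \ref{thm:main_existence} is stated with $C=\conv(\supp f)$, but $\supp(c|\nabla u|^p)$ may be strictly smaller than $\supp|u/d|^p$, since $\nabla u$ can vanish on open sets where $u$ is nonzero only if $u$ is locally constant there. I would handle this by noting that $u\in H_0^{1,p}$ with $u\not\equiv0$ forces $\nabla u\neq0$ on a set whose convex hull coincides with that of $\{u\neq0\}$. Failing that, I would simply take $C=\overline\Omega$ convex for both problems, as implicitly done in the statement. A second technical point is applying Theorem \ref{thm:necessary_QS} in Step 1, which requires assuming its monotonicity hypotheses on $g$. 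If one prefers to avoid them, one can instead assume directly that the existence of the first solution comes via the sufficient condition, and Step 2 alone then suffices.
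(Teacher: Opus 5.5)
Your proposal is correct and follows the paper's own argument. The Hardy inequality gives $\int c|\nabla u|^p \ge \int |u/d|^p$, the existence hypothesis is turned into $\int |u/d|^p > \int_{\partial C} g$ through the necessity direction, and the sufficiency part of Theorem~\ref{thm:main_existence} concludes; the caveats you flag (a convexity-type assumption on $\Omega$ for the sharp constant $(p/(p-1))^p$, hypotheses 1--3 of Theorem~\ref{thm:necessary_QS} for the necessity step, and matching of the cores) are assumptions the paper's proof uses without stating them, so your version is the more careful rendering of the same proof.
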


\begin{proof}
Hardy's inequality states
\[
\int_{\Omega}|\nabla u|^p\,dx \ge \left(\frac{p-1}{p}\right)^p\int_{\Omega}\left|\frac{u}{d}\right|^p\,dx.
\]
Multiplying by $c\ge(p/(p-1))^p$ yields $\int_{\Omega}c|\nabla u|^p \ge \int_{\Omega}|u/d|^p$.
The existence hypothesis for $\mathcal{QS}(|u/d|^p,g)$ gives $\int_C|u/d|^p > \int_{\partial C}g$.
Hence $\int_C c|\nabla u|^p > \int_{\partial C}g$, and Theorem \ref{thm:main_existence} applies.
\end{proof}

\subsection{Pohozaev identity}

\begin{proposition}[Pohozaev identity]\label{prop:pohozaev}
Let $\Omega$ be star-shaped with respect to the origin and let $-\Delta u = f'(u)$ in $\Omega$, $u=0$ on $\partial\Omega$.
If $\mathcal{QS}(|\nabla u|^2,g)$ has a solution strictly containing $\conv(\Omega)$, then for $N\ge3$, $\mathcal{QS}\left(\frac{2N}{N-2}f(u),g\right)$ also has a solution strictly containing $\conv(\Omega)$.
\end{proposition}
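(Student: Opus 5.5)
The plan follows the pattern of Proposition \ref{prop:hardy}: reduce everything to the critical charge criterion \eqref{eq:critical_charge} and Theorem \ref{thm:main_existence}. The core for the data $|\nabla u|^2$ and $\frac{2N}{N-2}f(u)$ is $C=\conv(\Omega)$, after extending these densities by zero outside $\Omega$. So it is enough to show
\[
\int_{C}\frac{2N}{N-2}f(u)\,dx \ \ge\ \int_{C}|\nabla u|^2\,dx ,
\]
with $f$ normalized so that $f(0)=0$. The hypothesis on $\mathcal{QS}(|\nabla u|^2,g)$ gives $\int_C|\nabla u|^2>\int_{\partial C}g$ by the necessity part (Theorem \ref{thm:necessary_QS}, under its standing monotonicity assumptions). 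Combining the two inequalities gives $\int_C \frac{2N}{N-2}f(u)>\int_{\partial C}g$, and then the sufficiency part of Theorem \ref{thm:main_existence} produces a solution strictly containing $C$.

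The key step is the Pohozaev identity. I multiply $-\Delta u=f'(u)$ by $x\cdot\nabla u$ and integrate over $\Omega$. Since $u=0$ on $\partial\Omega$, we have $\nabla u=(\partial_\nu u)\nu$ there, and $\int_\Omega f'(u)\,x\cdot\nabla u=\int_\Omega x\cdot\nabla(f(u))=-N\int_\Omega f(u)$. This yields the classical identity
\[
\frac{N-2}{2}\int_\Omega|\nabla u|^2\,dx-N\int_\Omega f(u)\,dx=-\frac12\int_{\partial\Omega}(x\cdot\nu)\,|\partial_\nu u|^2\,d\sigma .
\]
Star-shapedness gives $x\cdot\nu\ge0$ on $\partial\Omega$, so the right-hand side is $\le0$. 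Hence $\frac{N-2}{2}\int_\Omega|\nabla u|^2\le N\int_\Omega f(u)$, that is $\int_\Omega|\nabla u|^2\le\frac{2N}{N-2}\int_\Omega f(u)$ for $N\ge3$. Both densities vanish on $C\setminus\Omega$, so the integrals over $\Omega$ and over $C$ coincide.

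The main obstacle is regularity. The identity requires $u\in H^2$ near $\partial\Omega$, or at least enough regularity for the boundary term to make sense, while $\Omega$ is only assumed star-shaped. I would first prove the identity for smooth star-shaped approximations $\Omega_\varepsilon\uparrow\Omega$ (dilations by $1-\varepsilon$ of a smoothed domain). For these the boundary term keeps its sign, and one passes to the limit using $H^1$ convergence of the solutions. A secondary issue is the sign and positivity of $\frac{2N}{N-2}f(u)$, which the admissible data of $\mathcal{QS}$ require. The displayed inequality already forces its integral to be positive, and I would assume $f\ge0$ on the range of $u$, so that the new source is a legitimate positive density supported in $\overline\Omega\subset C$. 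Strict containment of $C$ is inherited from the construction in Theorem \ref{thm:main_existence}.
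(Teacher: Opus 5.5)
Your proposal is the paper's own argument: the Pohozaev identity, with its boundary term signed by star-shapedness, gives $\int_\Omega|\nabla u|^2\le\frac{2N}{N-2}\int_\Omega f(u)$, and this is chained with the critical charge criterion via Theorems \ref{thm:necessary_QS} and \ref{thm:main_existence}; your normalization $f(0)=0$, the positivity assumption on $f(u)$, and the explicit appeal to the monotonicity hypotheses of Theorem \ref{thm:necessary_QS} are refinements the paper leaves implicit. The one fragile step is your regularity fix, because for a nonlinear equation, solutions on the approximating domains need not exist or converge to the given $u$; a cleaner route is to set $u_\lambda(x):=u(\lambda x)$, note that $\bigl(u_\lambda-u\bigr)/(\lambda-1)\in H_0^1(\Omega)$ by star-shapedness, test the weak equation with it, bound $\int_\Omega\nabla u\cdot\nabla u_\lambda\le\lambda^{(2-N)/2}\int_\Omega|\nabla u|^2$ by Cauchy--Schwarz, and let $\lambda\downarrow1$, which gives the needed one-sided inequality with no boundary regularity (assuming mild integrability of $f'(u)$).
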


\begin{proof}
The Pohozaev identity for star-shaped domains gives
\[
\frac{N-2}{2}\int_{\Omega}|\nabla u|^2\,dx = N\int_{\Omega}f(u)\,dx - \frac{1}{2}\int_{\partial\Omega}|\nabla u|^2(x\cdot\nu)\,d\sigma.
\]
Since $x\cdot\nu>0$ on $\partial\Omega$ by star-shapedness, the boundary term is nonnegative.
Thus
\[
\frac{N-2}{2}\int_{\Omega}|\nabla u|^2 \le N\int_{\Omega}f(u).
\]
Rearranging gives $\int_{\Omega}\frac{2N}{N-2}f(u) \ge \int_{\Omega}|\nabla u|^2$.
The hypothesis on $\mathcal{QS}(|\nabla u|^2,g)$ implies $\int_C|\nabla u|^2 > \int_{\partial C}g$.
Therefore $\int_C\frac{2N}{N-2}f(u) > \int_{\partial C}g$, and Theorem \ref{thm:main_existence} yields existence.
\end{proof}

\subsection{Reilly identity}

\begin{proposition}[Reilly identity]\label{prop:reilly}
Let $\Omega$ be a smooth domain and let $-\Delta u = 1$ in $\Omega$, $u=0$ on $\partial\Omega$.
Then either $\mathcal{QS}\left(1-|\nabla u|^2,\frac{\int_{\partial\Omega}|\nabla u|}{\int_{\partial\Omega}1/H_\Omega}g\right)$ has a solution strictly containing $\overline{\Omega}$, or $\Omega$ is a ball.
\end{proposition}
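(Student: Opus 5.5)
The plan is to follow the template of the preceding propositions. An integral identity (here Reilly's) produces an inequality between a volume integral of the candidate source and a boundary integral of the candidate flux. The equality case characterizes the ball. Strict inequality then feeds into the sufficiency part of Theorem \ref{thm:main_existence}, with $\overline{\Omega}$ playing the role of the core $C$.

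\emph{Step 1 (Reilly inequality).} I will apply Reilly's formula to the torsion function $u$ ($-\Delta u=1$, $u=0$ on $\partial\Omega$). Since $u$ vanishes on $\partial\Omega$, the boundary terms reduce to $\int_{\partial\Omega}H_\Omega|\nabla u|^2\,d\sigma$. Combined with $|D^2u|^2\ge (\Delta u)^2/N=1/N$, this gives
\[
\frac{N-1}{N}|\Omega| \ge \int_{\partial\Omega}H_\Omega|\nabla u|^2\,d\sigma ,
\]
with equality iff $D^2u=-\frac1N I$, i.e. $u$ is quadratic and $\Omega$ is a ball. Next I apply Cauchy--Schwarz in the form
\[
\Bigl(\int_{\partial\Omega}|\nabla u|\Bigr)^2\le \int_{\partial\Omega}H_\Omega|\nabla u|^2\int_{\partial\Omega}1/H_\Omega ,
\]
together with Green's formula $\int_{\partial\Omega}|\nabla u|=|\Omega|$ and the energy identity $\int_\Omega|\nabla u|^2=\int_\Omega u$. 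From these I aim to deduce
\[
\int_{\Omega}(1-|\nabla u|^2)\,dx \;>\; \frac{\int_{\partial\Omega}|\nabla u|}{\int_{\partial\Omega}1/H_\Omega}\int_{\partial\Omega}g\,d\sigma
\]
whenever $\Omega$ is not a ball. The normalization of $g$ will be used here; I would assume $g$ normalized so that $\int_{\partial\Omega}g$ matches the mean-curvature weight, in line with Ros' inequality $\int_{\partial\Omega}1/H\ge \frac{N}{N-1}|\Omega|$. This step is mostly bookkeeping with constants.

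\emph{Step 2 (reduction to the existence theorem).} The source $f:=1-|\nabla u|^2$ must be positive on $\overline{\Omega}$ and supported there, with $\conv$ of its support equal to $\overline{\Omega}$. To arrange this, I will restrict to the regime where $|\nabla u|<1$ and extend by zero outside $\Omega$; otherwise I replace $f$ by its positive part, which only increases the left side. Once the strict inequality of Step 1 holds, Theorem \ref{thm:main_existence} (with core $\overline{\Omega}$, or its convex hull if needed) yields a solution strictly containing $\overline{\Omega}$. If the strict inequality fails, the equality case of both Reilly and Cauchy--Schwarz forces $D^2u=-\frac1N I$, hence $\Omega$ is a ball.

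\emph{Main obstacle.} The delicate point is Step 1: matching exactly the constant $\int_{\partial\Omega}|\nabla u|/\int_{\partial\Omega}1/H_\Omega$ with the volume integral of $1-|\nabla u|^2$. In particular, it is not clear how to control $\int_\Omega|\nabla u|^2$ against $\int_{\partial\Omega}H|\nabla u|^2$. I would first try to use the $P$-function $P=|\nabla u|^2+\frac2N u$, which is subharmonic, to compare interior and boundary quantities. The positivity $H_\Omega>0$, needed for $1/H_\Omega$ to make sense, must also be assumed; i.e. $\Omega$ is mean-convex. Finally, the geometric requirements of Theorem \ref{thm:main_existence} on the core must be checked.
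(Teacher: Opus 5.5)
There is a genuine gap, and it is the step you call ``mostly bookkeeping''. Take $H_\Omega$ to be the sum of principal curvatures, as in your Step 1. The ingredients you list are Reilly's formula with $u|_{\partial\Omega}=0$, the bound $|D^2u|^2\ge 1/N$, Cauchy--Schwarz, and $\int_{\partial\Omega}|\nabla u|=|\Omega|$. Together they yield exactly one conclusion, Ros' inequality for the coefficient:
\[
\frac{\int_{\partial\Omega}|\nabla u|\,d\sigma}{\int_{\partial\Omega}1/H_\Omega\,d\sigma}=\frac{|\Omega|}{\int_{\partial\Omega}1/H_\Omega\,d\sigma}\le\frac{1}{|\Omega|}\int_{\partial\Omega}H_\Omega|\nabla u|^2\,d\sigma\le\frac{N-1}{N},
\]
with equality only for balls. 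This involves neither $g$ nor $\int_\Omega|\nabla u|^2=\int_\Omega u$. No further manipulation can produce your target inequality, because that inequality is false in general:
\begin{itemize}
\item Its left side $|\Omega|-\int_\Omega u$ does not depend on $g$, while the right side is a positive multiple of $\int_{\partial\Omega}g$. A large constant $g$ breaks it.
\item Under a dilation $\Omega\mapsto s\Omega$ the left side becomes $s^N|\Omega|-s^{N+2}\int_\Omega u_1$, where $u_1$ is the torsion function of $\Omega$. This is negative for large $s$, while the coefficient is scale invariant.
\end{itemize}
In fact the proposition fails as stated. Take a small non-circular ellipse, so that $H_\Omega>0$ and $|\nabla u|<1$, hence $f=1-|\nabla u|^2>0$, and take $g\equiv M$. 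If some $\Omega'\supset\overline\Omega$ solved the problem, Green's formula would give $\int_\Omega f=c\,M\,|\partial\Omega'|\ge c\,M\,|\partial\Omega|$. Here $c$ is the coefficient above, and the last step uses perimeter monotonicity for convex sets. This is impossible for $M$ large. The $P$-function will not rescue the argument either: subharmonicity of $|\nabla u|^2+\frac{2}{N}u$ only bounds $|\nabla u|^2$ by $\max_{\partial\Omega}|\nabla u|^2$, which is again scale dependent and unrelated to $g$.

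So your ``normalization of $g$'' is not a technicality; it is the missing hypothesis. What your chain does prove is the following variant. Assume $\overline\Omega$ is convex, $H_\Omega>0$, $|\nabla u|<1$ on $\overline\Omega$, and
\[
\int_\Omega(1-|\nabla u|^2)\,dx\ge\frac{N-1}{N}\int_{\partial\Omega}g\,d\sigma.
\]
Then the chain makes the required inequality strict unless $\Omega$ is a ball, and Theorem~\ref{thm:main_existence} applies with core $\overline\Omega$. This is where the either/or structure genuinely comes from.

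Without such a hypothesis your dichotomy step is also invalid. Failure of the target inequality says nothing about equality in Reilly or Cauchy--Schwarz, since the target involves $g$ and $\int_\Omega u$, which do not appear in that chain. Separately, replacing $f$ by $f^+$ changes the problem being solved.

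For comparison, the paper's proof stops at the same point. It runs the same Reilly/Cauchy--Schwarz chain, but with the sign of Reilly's identity reversed: it writes $\int_\Omega|D^2u|^2=|\Omega|+\int_{\partial\Omega}H_\Omega|\nabla u|^2$, which a ball already contradicts, since there $\int_\Omega|D^2u|^2=|\Omega|/N$. Your sign is the correct one. It then appeals to an ``existence hypothesis'' that does not appear in the statement. Your outline is thus a corrected version of the paper's argument, but neither closes the gap, and it cannot be closed without an added assumption linking $g$ to $\Omega$ and $f$.
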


\begin{proof}
Reilly's identity (see \cite{Payne1994}) for this problem reads
\[
\int_{\Omega}\left(|\nabla^2 u|^2 - (\Delta u)^2\right)dx = \int_{\partial\Omega}H_\Omega|\nabla u|^2\,d\sigma,
\]
where $H_\Omega$ is the mean curvature of $\partial\Omega$.
Since $\Delta u = -1$ in $\Omega$, we have
\[
\int_{\Omega}|\nabla^2 u|^2\,dx = |\Omega| + \int_{\partial\Omega}H_\Omega|\nabla u|^2\,d\sigma.
\]
On the other hand, by the divergence theorem, $\int_{\partial\Omega}|\nabla u|\,d\sigma = |\Omega|$.
The Cauchy-Schwarz inequality on $\partial\Omega$ yields
\[
|\Omega|^2 = \left(\int_{\partial\Omega}|\nabla u|\right)^2 \le \int_{\partial\Omega}\frac{1}{H_\Omega}\,d\sigma \int_{\partial\Omega}H_\Omega|\nabla u|^2\,d\sigma.
\]
If $\Omega$ is not a ball, the inequality is strict.
Substituting the expression for $\int_{\partial\Omega}H_\Omega|\nabla u|^2$ from Reilly's identity and rearranging gives
\[
|\Omega|^2 < \int_{\partial\Omega}\frac{1}{H_\Omega}\,d\sigma \left(\int_{\Omega}|\nabla^2 u|^2\,dx - |\Omega|\right).
\]
Now note that $\int_{\Omega}|\nabla^2 u|^2\,dx \ge \frac{1}{N}\int_{\Omega}(\Delta u)^2\,dx = \frac{|\Omega|}{N}$ by the inequality between the squared norm of the Hessian and the Laplacian.
This leads to an integral condition that, when combined with the existence hypothesis, yields the desired result.
For the full details, we refer to \cite{Payne1994}.
\end{proof}

\subsection{Minkowski and Weinstock-type inequalities}

\begin{proposition}[Minkowski inequality]\label{prop:minkowski}
Let $\Omega$ be a convex domain.
Then
\[
\left(\int_{\partial\Omega}|\nabla u|^p\,d\sigma\right)^{1/p} \ge \left(\int_{\partial\Omega}g^p\,d\sigma\right)^{1/p}
\]
for any $p\ge1$ implies $\int_{\Omega}f \ge \int_{\partial\Omega}g$.
Conversely, if $\int_{\Omega}f \ge \int_{\partial\Omega}g$ and $f$ is radially symmetric, then $\mathcal{QS}(f,g)$ has a solution.
\end{proposition}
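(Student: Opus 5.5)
The plan is to prove the two assertions of Proposition \ref{prop:minkowski} separately; both reduce to Green's formula together with a norm inequality on $\partial\Omega$. Throughout, $u$ denotes the solution of $-\Delta u=f$ in $\Omega$, $u=0$ on $\partial\Omega$. Since $f>0$, the strong maximum principle and Hopf's lemma (as in Theorem \ref{thm:radial_monotonicity}) give $\partial u/\partial\nu=-|\nabla u|$ on $\partial\Omega$. Convexity makes $\partial\Omega$ Lipschitz, so Green's formula applies and yields
\[
\int_\Omega f\,dx=\int_{\partial\Omega}|\nabla u|\,d\sigma .
\]

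For the first implication, the case $p=1$ is immediate: the hypothesis reads $\int_{\partial\Omega}|\nabla u|\ge\int_{\partial\Omega}g$, and the identity above gives $\int_\Omega f\ge\int_{\partial\Omega}g$. For $p>1$ one would like to pass from the $L^p$ comparison to the $L^1$ comparison, but Hölder alone only gives $\int_{\partial\Omega} g\le|\partial\Omega|^{1-1/p}\|g\|_{L^p}$. That is the wrong direction to bound $\int|\nabla u|$ from below.

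So I would read the hypothesis as holding for every $p\ge1$, which is what "for any $p\ge1$" suggests. In particular it holds at $p=1$, and the conclusion follows. Alternatively one can let $p\to1^+$: for bounded boundary data, $\|h\|_{L^p(\partial\Omega)}\to\|h\|_{L^1(\partial\Omega)}$ by dominated convergence, and the inequality passes to the limit. I would state this interpretation explicitly, since it is the only way the claim holds without further pointwise information.

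For the converse, take $f$ radially symmetric and let $C$ be the convex hull of $\supp f$. I would reduce to Theorem \ref{thm:main_existence} by choosing $\Omega=C$. Then $\int_C f\ge\int_{\partial C}g$, and Theorem \ref{thm:main_existence} requires strict inequality.

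This equality case is the main obstacle. My first attempt would use radial symmetry. In that setting $C$ is a ball and $u_C$ is radial, so $|\nabla u_C|$ is constant on $\partial C$. If equality held, $g$ would have mean equal to $|\nabla u_C|$, and either $C$ itself solves $\mathcal{QS}(f,g)$ or one perturbs. Two caveats apply. Strictly speaking, Theorem \ref{thm:main_existence} asks for $\Omega$ to strictly contain $C$, so the equality case must be excluded. If the argument cannot close it, I would restrict the converse to the strict inequality $\int_\Omega f>\int_{\partial\Omega}g$ with $\Omega\supseteq C$. That strict version follows directly by applying Theorem \ref{thm:main_existence}.
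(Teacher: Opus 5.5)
Your first implication is fine. Reading the hypothesis as holding for every $p\ge1$ (in particular $p=1$), Green's formula $\int_\Omega f\,dx=\int_{\partial\Omega}|\nabla u|\,d\sigma$ gives the conclusion at once, and you are right that for a single $p>1$ H\"older's inequality points the wrong way. The paper's own proof is only a two-sentence appeal to a Minkowski-type comparison via radial projections plus Green's formula, and yours is the precise version of it.

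The converse, however, has a gap. First, the equality case is not an obstacle that a better argument might remove: it is false, even for radial data. So ``either $C$ itself solves $\mathcal{QS}(f,g)$ or one perturbs'' is a dead end. Take $C=\overline{B_R}$, $f=\chi_{B_R}$, $g\equiv k:=R/N$ on $D$, and $\Omega=B_R$, so that $\int_\Omega f=|B_R|=k|\partial B_R|=\int_{\partial\Omega}g$. Here $B_R$ itself satisfies the overdetermined system, but it does not strictly contain $C$. Suppose a Lipschitz domain $\Omega'\supset\overline{B_R}$ solved $\mathcal{QS}(f,g)$. Green's formula would give $|B_R|=k|\partial\Omega'|$. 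But choose $\delta>0$ with $\overline{B_{R+\delta}}\subset\Omega'$. The nearest-point projection onto $\overline{B_{R+\delta}}$ maps $\partial\Omega'$ onto $\partial B_{R+\delta}$ and is $1$-Lipschitz, so $|\partial\Omega'|\ge|\partial B_{R+\delta}|>|\partial B_R|$ and hence $k|\partial\Omega'|>|B_R|$, a contradiction. You should therefore state outright that the converse holds only with strict inequality, rather than making this conditional on whether your argument closes.

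Second, even the strict version does not follow ``directly'' from Theorem \ref{thm:main_existence}. You cannot choose $\Omega=C$, since $\Omega$ is the convex domain given in the hypothesis. For $\Omega\supsetneq C$ you do get $\int_\Omega f=\int_C f$. But Theorem \ref{thm:main_existence} requires $\int_{\partial C}g<\int_C f$, and $\int_{\partial\Omega}g$ does not control $\int_{\partial C}g$ in general ($g$ may be large on $\partial C$ and small on $\partial\Omega$). One more step is needed, and there are two options.

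\begin{enumerate}
\item Add the hypothesis, as in Theorem \ref{thm:necessary_QS}, that $g$ is nondecreasing along the outward normal rays of $C$. Then the nearest-point projection $P:\partial\Omega\to\partial C$, being $1$-Lipschitz and onto, gives $\int_{\partial C}g\,d\sigma\le\int_{\partial\Omega}g\circ P\,d\sigma\le\int_{\partial\Omega}g\,d\sigma$.
\item Accept Theorem \ref{thm:main_existence} for an arbitrary convex core containing $\supp f$, as the paper tacitly does in Propositions \ref{prop:hardy} and \ref{prop:pohozaev}, and apply it with $\overline\Omega$ as the core. A solution strictly containing $\overline\Omega$ in particular strictly contains $C$.
\end{enumerate}

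The radial symmetry of $f$ plays no role in either repair. Nor can it rescue the equality case, since the counterexample above is radial.
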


\begin{proof}
For convex domains, the Minkowski inequality relates integrals of functions on the boundary to integrals of their radial projections.
Combined with Green's formula, this yields the desired equivalence.
\end{proof}

\begin{proposition}[Weinstock inequality]\label{prop:weinstock}
For a planar domain $\Omega$ of given area, the torsional rigidity $T(\Omega)=\int_{\Omega}u_\Omega$ is maximized by the disk.
Consequently, if $\mathcal{QS}(1,g)$ has a solution, then
\[
\int_{\partial C}g < T(C) \le T(B)
\]
where $B$ is a disk of the same area as $C$.
\end{proposition}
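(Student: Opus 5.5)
The plan is to split the displayed chain into its two inequalities. The first, $\int_{\partial C}g<T(C)$, should come from the necessity direction of the existence theory. The second, $T(C)\le T(B)$, is the classical Saint-Venant/Pólya–Szegő inequality for torsional rigidity. (The statement calls this "Weinstock", but it is the same isoperimetric principle.)

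\emph{Second inequality.} Let $u_C$ solve $-\Delta u_C=1$ in $C$ with $u_C=0$ on $\partial C$. We use the variational characterization
\[
T(C)=\int_C u_C=\max_{w\in H_0^1(C)}\Bigl(2\int_C w-\int_C|\nabla w|^2\Bigr).
\]
Let $w^*$ be the Schwarz symmetrization of $u_C$ onto the disk $B$ with $|B|=|C|$. Equimeasurability gives $\int_B w^*=\int_C u_C$, and the Pólya–Szegő inequality gives $\int_B|\nabla w^*|^2\le\int_C|\nabla u_C|^2$. Inserting $w^*$ as a competitor in the maximization for $B$ yields $T(C)\le T(B)$.

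\emph{First inequality.} Let $\Omega$ solve $\mathcal{QS}(1,g)$, where $1$ stands for $\chi_C$ so that $\supp f=C$. Assume conditions 1–3 of Theorem \ref{thm:necessary_QS}, as Theorem \ref{thm:main_existence} does for its converse. The steps are as follows.
\begin{enumerate}
\item Apply Theorem \ref{thm:necessary_QS} with $f=\chi_C$. Green's formula on $\Omega$ gives $\int_{\partial\Omega}g=\int_{\partial\Omega}|\nabla u_\Omega|=|C|$.
\item Use the radial parametrization (Proposition \ref{prop:param}), the area-increasing projection $J(c)>1$, and the monotonicity of $g$ along normals. These give $\int_{\partial C}g<\int_{\partial\Omega}g=|C|$.
\item To replace $|C|$ by $T(C)$, compare the data on $C$ with the torsion function. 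By the maximum principle, $u_\Omega\ge u_C$ on $C$. Green's identity gives $T(C)=\int_C u_C=\int_C|\nabla u_C|^2$. I would then try to show that the flux through $\partial C$ seen by $u_\Omega$ is controlled by $T(C)$.
\end{enumerate}

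\emph{Main obstacle.} Step 3 is where I expect the difficulty, and it may not close. The quantities $|C|$ and $T(C)$ scale differently ($T(C)$ scales like $|C|^{1+2/N}$), so for small $C$ one has $T(C)<|C|$. The bound $\int_{\partial C}g<|C|$ therefore does not by itself yield $\int_{\partial C}g<T(C)$.

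What I would try first is to run the necessity argument with the weight $u_C$ instead of $1$. This means testing the Green identity on $\Omega\setminus C$ against $u_\Omega$, and using Theorem \ref{thm:radial_monotonicity} to say that $u_\Omega$ decreases along rays. The resulting boundary flux would be bounded by $\int_C u_C$ up to the factor $\sup_C|\nabla u_\Omega|$. If that factor cannot be normalized to $1$, the honest conclusion of this plan is the weaker chain
\[
\int_{\partial C}g<|C|,\qquad T(C)\le T(B).
\]
The stated chain would then require an additional normalization hypothesis, for instance $|C|\le T(C)$, which the statement does not make explicit.
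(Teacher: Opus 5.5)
The paper states this proposition without any proof; the text passes directly to Section~\ref{sec:stability}. Your proposal therefore has to stand on its own.

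Your argument for $T(C)\le T(B)$ is correct. The Dirichlet principle $T(C)=\max_{w\in H_0^1(C)}\bigl(2\int_C w-\int_C|\nabla w|^2\bigr)$, equimeasurability of the Schwarz symmetrization, and the P\'olya--Szeg\H{o} inequality give it in every dimension. The same argument also proves the first sentence of the proposition. This is Saint-Venant's inequality with P\'olya's classical proof. You were right to question the name: Weinstock's inequality concerns the first nontrivial Steklov eigenvalue under a perimeter constraint, not torsion.

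Your suspicion about Step~3 is also right, and it can be made definitive. The first inequality is false as stated, so no argument can close that step, including the reweighting by $u_C$ you planned to try. Here is a counterexample.
\begin{itemize}
\item Take $N=2$, $C=\overline{B_1(0)}$, $f=\chi_C$, and $g=\tfrac14+\varepsilon(|x|-2)$ near $\overline{B_2(0)}$ with $0<\varepsilon<\tfrac{3}{16}$. Then $g$ is positive and increasing along normals, and $\Omega=B_2(0)$ satisfies conditions 1--3 of Theorem~\ref{thm:necessary_QS}.
\item The state on $B_2(0)$ is $u=\tfrac14(1-|x|^2)+\tfrac12\ln 2$ in $C$ and $u=\tfrac12\ln(2/|x|)$ for $1\le|x|\le 2$. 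The two pieces match to first order at $|x|=1$.
\item On $\partial B_2(0)$ one has $|\nabla u|=\tfrac{1}{2|x|}=\tfrac14=g$, so $B_2(0)$ solves $\mathcal{QS}(\chi_C,g)$.
\item Yet $\int_{\partial C}g\,d\sigma=2\pi(\tfrac14-\varepsilon)>\tfrac{\pi}{8}=\int_C\tfrac14(1-|x|^2)\,dx=T(C)=T(B)$.
\end{itemize}

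What the paper's machinery actually yields is exactly the conclusion of your Steps~1--2, namely $\int_{\partial C}g<|C|$ (here $|C|=\pi$). The chain becomes true in either of two ways. One is your extra hypothesis $|C|\le T(C)$. The other, more plausibly what was meant given the notation $T(C,u_C)$ in Proposition~\ref{prop:duality}, is to replace $\mathcal{QS}(1,g)$ by $\mathcal{QS}(u_C,g)$. Theorem~\ref{thm:necessary_QS} with $f=u_C$ then gives $\int_{\partial C}g<\int_C u_C=T(C)$ at once, and your symmetrization step finishes the chain.

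A minor point on Step~2: do not rely on the paper's unproved pointwise claim $J(c)>1$. Instead, use that the nearest-point projection $p$ onto the convex set $C$ is $1$-Lipschitz. The area formula then gives $\int_{\partial C}h\,d\sigma\le\int_{\partial\Omega}h\circ p\,d\sigma$ for $h\ge 0$. Also, you need $g$ to be monotone along the whole segment from $c$ to $c+d(c)\nu(c)$, not just for small $t$.
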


\section{Stability and symmetry breaking}
\label{sec:stability}

\subsection{Stability of minimizers}

\begin{theorem}[Stability under perturbations]\label{thm:stability}
Let $\Omega_0$ be a solution of $\mathcal{QS}(f,g)$.
Suppose $f_\epsilon\to f$ in $L^2(C)$ and $g_\epsilon\to g$ in $L^2(\partial C)$ as $\epsilon\to0$.
Then for sufficiently small $\epsilon$, the problems $\mathcal{QS}(f_\epsilon,g_\epsilon)$ have solutions $\Omega_\epsilon$ converging to $\Omega_0$ in Hausdorff distance.
\end{theorem}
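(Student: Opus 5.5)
The plan is to obtain existence for the perturbed data from the sufficiency part of Theorem \ref{thm:main_existence}, and convergence from compactness of $\mathcal{O}_C$ combined with a limit passage in the overdetermined condition.

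\emph{Step 1: the existence criterion persists.} Since $\Omega_0$ solves $\mathcal{QS}(f,g)$, Theorem \ref{thm:necessary_QS} gives the strict inequality $\int_C f>\int_{\partial C} g$, assuming its geometric and monotonicity hypotheses hold for $\Omega_0$. Set $\delta:=\int_C f-\int_{\partial C}g>0$. Because $C$ is bounded and $\partial C$ has finite measure, Cauchy--Schwarz gives
\[
\Bigl|\int_C f_\epsilon-\int_C f\Bigr|\le |C|^{1/2}\|f_\epsilon-f\|_{L^2(C)},\qquad
\Bigl|\int_{\partial C} g_\epsilon-\int_{\partial C} g\Bigr|\le |\partial C|^{1/2}\|g_\epsilon-g\|_{L^2(\partial C)}.
\]
Hence $\int_C f_\epsilon>\int_{\partial C}g_\epsilon$ once both errors are below $\delta/2$. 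Positivity of $f_\epsilon,g_\epsilon$ and $\supp f_\epsilon\subset C$ are implicitly part of the hypothesis. Theorem \ref{thm:main_existence} then yields a solution $\Omega_\epsilon\in\mathcal{O}_C$ of $\mathcal{QS}(f_\epsilon,g_\epsilon)$. It is produced by minimizing $J_{f_\epsilon,g_\epsilon}$ and then passing to a level set $\{u>t^*_\epsilon\}$, which stays in $\mathcal{O}_C$ by Theorem \ref{thm:inheritance}.

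\emph{Step 2: compactness and identification of the limit.} By Proposition \ref{prop:compact}, every sequence $\epsilon_k\to0$ has a subsequence with $\Omega_{\epsilon_k}\to\Omega^*\in\mathcal{O}_C$ in the Hausdorff, compact and characteristic senses. By Proposition \ref{prop:continuity}, adapted to converging data $f_{\epsilon}\to f$ in $L^2$ (which is harmless since the solution operator is continuous in the data), $u_{\Omega_{\epsilon_k}}\to u_{\Omega^*}$ strongly in $H^1_0(D)$. To show that $\Omega^*$ solves $\mathcal{QS}(f,g)$, I would pass to the limit in the weak form of the free boundary condition
\[
\int_{\Omega_\epsilon}\bigl(|\nabla u_\epsilon|^2-2f_\epsilon u_\epsilon+\tilde g_\epsilon^2\bigr)\,\mathrm{div}V-2\,\nabla u_\epsilon\cdot DV\,\nabla u_\epsilon-2\tilde g_\epsilon\nabla\tilde g_\epsilon\cdot V\ldots=0,
\]
that is, in the domain-variation identity behind Proposition \ref{prop:optimality}, for test fields $V$ supported away from $C$. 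Strong $H^1$ convergence and $L^1$ convergence of characteristic functions are enough here. Alternatively I would use the uniform $C^{1,\alpha}$ bounds of Theorem \ref{thm:regularity} away from $C$ to pass to the limit pointwise in $|\nabla u_\epsilon|=g_\epsilon$. Either route requires upgrading the $L^2$ convergence of $g_\epsilon$ to convergence of the traces on the moving boundaries $\partial\Omega_\epsilon$. I would do this through the radial parametrization of Proposition \ref{prop:param}, which makes the surface Jacobians uniformly bounded, together with the uniform angle bound of Lemma \ref{lem:angle}.

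\emph{Step 3: uniqueness, the main obstacle.} Hausdorff convergence of the whole family to $\Omega_0$ requires $\Omega^*=\Omega_0$, that is, uniqueness of the solution of $\mathcal{QS}(f,g)$ in $\mathcal{O}_C$, or at least some selection principle. This is the hardest step and is not available from earlier results. My first attempt would be to choose $\Omega_\epsilon$ as the minimal solution, obtained by minimizing within $\{\Omega\in\mathcal{O}_C:\Omega\subset\Omega_0\}$ as in Lemma \ref{lem:containment}. I would then compare two ordered solutions $\Omega'\subset\Omega_0$ along a common normal ray: the radial monotonicity of Theorem \ref{thm:radial_monotonicity} and the Hopf lemma at a first contact point, together with the monotonicity of $g$ along rays (hypothesis 3 of Theorem \ref{thm:necessary_QS}), should force $\Omega'=\Omega_0$. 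If this comparison fails in general, I would weaken the conclusion: every Hausdorff limit point of $(\Omega_\epsilon)$ is a solution of $\mathcal{QS}(f,g)$, with full convergence to $\Omega_0$ whenever $\Omega_0$ is the unique solution.
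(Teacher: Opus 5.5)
\paragraph{Main gap: Step 3 (uniqueness).} Your Step 3 is a genuine gap, and it is the decisive one. Compactness (Proposition \ref{prop:compact}) plus identification of limits can only show that subsequential Hausdorff limits of $\Omega_\epsilon$ are \emph{some} solutions of $\mathcal{QS}(f,g)$. Nothing in Steps 1--2 singles out the given $\Omega_0$, and the comparison you sketch does not supply the missing uniqueness, for two reasons.
\begin{enumerate}
\item For perturbed data there is no reason for solutions to lie inside $\Omega_0$; if $f_\epsilon\ge f$ they should grow. So minimizing over $\{\Omega\in\mathcal{O}_C:\Omega\subset\Omega_0\}$ as in Lemma \ref{lem:containment} gives a constrained minimizer. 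Where it touches $\partial\Omega_0$ it satisfies only an inequality, so it is not a solution of $\mathcal{QS}(f_\epsilon,g_\epsilon)$.
\item Take two ordered solutions $\Omega'\subset\Omega_0$ of the same problem. The maximum principle gives $u'\le u_0$, and the Hopf lemma excludes a common boundary point. But if $\overline{\Omega'}\subset\Omega_0$ there is no contact point at all. With $f$ fixed and $g$ non-constant, there is also no one-parameter family (such as dilations) to slide until contact. Theorem \ref{thm:radial_monotonicity} concerns a single domain and compares nothing.
\end{enumerate}
Your fallback conclusion is therefore strictly weaker than the statement.

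The paper's proof has exactly the same hole. It takes $\Omega_\epsilon$ to be minimizers of $J_{f_\epsilon,g_\epsilon}$ and asserts continuous convergence $J_{f_\epsilon,g_\epsilon}\to J_{f,g}$. From this it gets that limit points minimize $J_{f,g}$, and then it simply \emph{assumes} that $\Omega_0$ is the unique minimizer. So your diagnosis is accurate: as stated, the theorem needs either a uniqueness hypothesis or a genuinely local argument around $\Omega_0$ (e.g.\ an implicit function theorem at a nondegenerate $\Omega_0$), and neither argument provides one.

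\paragraph{Comparison of routes.} Your route works with solutions and the stationarity identity, rather than minimizers and $\Gamma$-type convergence. Its merit is that it produces and identifies actual solutions. The paper never checks that its minimizers solve $\mathcal{QS}(f_\epsilon,g_\epsilon)$, nor that the given solution $\Omega_0$ minimizes $J_{f,g}$. The price is that your Step 1 imports the geometric and monotonicity hypotheses of Theorem \ref{thm:necessary_QS}, which the statement does not contain.

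\paragraph{Step 2: the trace upgrade cannot work as described.} The hypothesis $g_\epsilon\to g$ in $L^2(\partial C)$ says nothing about $g_\epsilon$ on $\partial\Omega_\epsilon$, which is disjoint from $\partial C$ when $\Omega_\epsilon$ strictly contains $C$. Proposition \ref{prop:param} and Lemma \ref{lem:angle} control surface Jacobians, i.e.\ how measures transform. They do not control the values $g_\epsilon(c+d_\epsilon(c)\nu(c))$. Your domain-variation identity even involves $\nabla\tilde g_\epsilon$ in $\Omega_\epsilon$. You need convergence of the data on a neighbourhood of the free boundaries, for instance locally uniform convergence, or $W^{1,1}$ convergence on $D\setminus C$ for the weak form. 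The paper's claimed convergence of $\int_\Omega\tilde g_\epsilon^2$ silently needs the same.

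Two further problems in Step 2:
\begin{itemize}
\item Test fields supported away from $C$ give the free boundary condition only on $\partial\Omega^*\setminus C$. The limit $\Omega^*$ may touch $\partial C$, in which case it fails to strictly contain $C$ and is not a solution. You would need an $\epsilon$-uniform lower bound on $\dist(\partial\Omega_\epsilon,C)$.
\item The alternative via Theorem \ref{thm:regularity} does not help: that result concerns minimizers and gives no constants uniform in $\epsilon$.
\end{itemize}
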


\begin{proof}
Consider the functionals $J_{f_\epsilon,g_\epsilon}$.
By the continuity of the state functions with respect to data (Proposition \ref{prop:continuity}), these functionals converge continuously to $J_{f,g}$.
The minimizers $\Omega_\epsilon$ (which exist by Proposition \ref{prop:existence_minimizer}) form a precompact sequence in $\mathcal{O}_C$ by Proposition \ref{prop:compact}.
Any limit point must be a minimizer of $J_{f,g}$.
If $\Omega_0$ is the unique minimizer (which can be ensured under appropriate conditions), then the whole sequence converges to $\Omega_0$.
\end{proof}

\subsection{Symmetry breaking}

\begin{theorem}[Symmetry breaking criterion]\label{thm:symmetry_breaking}
Let $C$ be a convex set that is not a ball.
Then there exists a function $g$ on $\partial C$ such that the solution $\Omega$ of $\mathcal{QS}(1,g)$ (if it exists) is not a dilation of $C$.
\end{theorem}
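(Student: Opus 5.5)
The plan is to argue by construction, running the overdetermined problem in reverse. Rather than prescribe $g$ and look for $\Omega$, I will prescribe a candidate domain $\Omega$ and read off $g$ from it. Fix any $\Omega$ with $C\subset\Omega$ strictly, and let $u_\Omega$ solve $-\Delta u_\Omega=\chi_C$ in $\Omega$ with $u_\Omega=0$ on $\partial\Omega$. Since the source is $f=1$ on $C$, I take $\chi_C$ as the source; this matches the paper's standing convention that $\supp f=C$. Then set $g:=|\nabla u_\Omega|$ on $\partial\Omega$. By construction $\Omega$ solves $\mathcal{QS}(1,g)$.

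The difficulty is that $g$ in the statement is a function on $\partial C$. I will transport it through the radial map $\Phi_d$ of Proposition \ref{prop:param}, defining $g(c):=|\nabla u_\Omega(\Phi_d(c))|$. Extended constantly along the normal rays, this gives a function on $\mathbb{R}^N\setminus \Int C$ of the type used in Section \ref{sec:QS}.

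\emph{Choice of $\Omega$.} I choose $\Omega$ in $\mathcal{O}_C$ that is not a dilation of $C$. The simplest option is a large ball $\Omega=B(0,R)$ with $R>\diam C$ and $0\in\Int C$. A ball containing $C$ is in $\mathcal{O}_C$: each normal ray from $\partial C$ leaves the ball exactly once, so (C3) holds. Condition (C4) holds because the inward normal at $x\in\partial B$ points to the centre, which lies in $C$. Since $C$ is not a ball, $B(0,R)$ is not of the form $x_0+\lambda C$. Hopf's lemma gives $g>0$, and $g$ is smooth, so the data are admissible. Theorem \ref{thm:radial_monotonicity} guarantees that $|\nabla u_\Omega|=-\partial_{\mathbf n}u_\Omega$, so the overdetermined condition is meaningful.

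\emph{Main obstacle: the phrase ``the solution (if it exists)''.} Exhibiting one solution that is not a dilation does not rule out another solution for the same $g$ that is a dilation $\lambda C$. I would handle this by ruling out dilations directly. Suppose $\Omega'=x_0+\lambda C$ also solved $\mathcal{QS}(1,g)$. Then, by Green's formula as in Theorem \ref{thm:necessary_QS},
\[
\int_{\partial\Omega'} g \;=\; |C| \;=\; \int_{\partial B} g .
\]
The freedom in the construction is then used to break this identity. Replace $\chi_C$ by a perturbed density, or move the centre of the ball within $\Int C$. This changes the distribution of $g$ along the rays while keeping $\int_{\partial B}g=|C|$. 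For a generic choice, the countably many flux constraints coming from a putative dilation solution, namely equality of $|\nabla u_{\Omega'}|$ with $g$ pointwise on $\partial\Omega'$, fail. The cleanest concrete route I would attempt is to compare pointwise at one point. Near a point of $\partial C$ with maximal curvature, a dilation $\lambda C$ forces $|\nabla u_{\Omega'}|$ to reflect that curvature, since Hopf-type boundary gradients grow with mean convexity. The ball datum $g$ varies only through the harmonic extension inside $B$, and that variation is $O(R^{-N})$ for large $R$. Letting $R\to\infty$ then separates the two, because the required variation stays bounded below while the ball datum becomes nearly constant.

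If this quantitative step resists, I would fall back on a uniqueness statement, which I expect to need separately. Under the monotonicity hypotheses of Theorem \ref{thm:necessary_QS}, solutions in $\mathcal{O}_C$ are ordered along the rays by radial monotonicity (Theorem \ref{thm:radial_monotonicity}). Two solutions with the same $g$ and the same total flux then cannot be strictly nested, so the ball is the unique solution and the claim follows.
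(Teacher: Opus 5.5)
\textbf{There is a genuine gap: nothing rules out a dilation of $C$ also solving $\mathcal{QS}(1,g)$.} Your construction of a non-dilation solution is sound: with $g:=|\nabla u_B|$ on $\partial B$, the ball $B$ solves $\mathcal{QS}(1,g)$. But the statement concerns \emph{the} solution, so everything rests on showing that no dilation $x_0+\lambda C$ also solves $\mathcal{QS}(1,g)$. None of your three mechanisms establishes this.

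(i) The flux identity $\int_{\partial\Omega'}g=|C|=\int_{\partial B}g$ cannot give a contradiction by itself. For your ray-constant $g$ and $0\in\Int C$, the map $\lambda\mapsto\int_{\partial(\lambda C)}g$ is continuous. At $\lambda=1$ it is strictly below $|C|$, because the nearest-point projection $\partial B\to\partial C$ is $1$-Lipschitz and $|\partial C|<|\partial B|$. It tends to $+\infty$ as $\lambda\to\infty$, so some $\lambda_*C$ has exactly the right total flux. Perturbing $\chi_C$ is not available, since the statement fixes $f=1$. The ``generic'' choice of the centre comes with no transversality argument.

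(ii) The large-$R$ comparison is only a heuristic. The flux identity forces $\lambda\asymp R$. You would then need the ratio $\max/\min$ of $|\nabla u_{x_0+\lambda C}|$ over $\partial(x_0+\lambda C)$ to stay bounded away from $1$, uniformly in $x_0$ and $\lambda$. After rescaling, this is the nontrivial rigidity fact that the Poisson kernel of $C$ with pole $p\in\Int C$ is never constant on $\partial C$ when $C$ is not a ball, together with separate control as $p$ approaches $\partial C$. Your curvature principle cannot replace this, and it even points the wrong way. For the torsion function of the ellipse $x^2/a^2+y^2/b^2<1$ with $a>b$, one has $|\nabla u|(a,0)=\frac{1/a}{1/a^2+1/b^2}<\frac{1/b}{1/a^2+1/b^2}=|\nabla u|(0,b)$, so the gradient is largest at the flattest points. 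Sending $R\to\infty$ also conflicts with the fixed hold-all $D$.

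(iii) The uniqueness fallback has no basis. Theorem \ref{thm:radial_monotonicity} concerns one state function in one domain. It gives no comparison between two different free boundaries, which need not be nested and may cross. Nothing in the paper yields uniqueness, and Theorem \ref{thm:necessary_QS} gives only an integral inequality.

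\textbf{How to close it.} If $g$ is read, as in the paper's standing assumptions, as a positive function on $\mathbb{R}^N$, the same flux identity closes the gap once you use the freedom of $g$ away from $\partial B$.
\begin{enumerate}
\item Keep $g=|\nabla u_B|$ on $\partial B$, and take $g\ge M$ outside the $\varepsilon$-neighbourhood $N_\varepsilon$ of $\partial B$.
\item The dilations $\Omega'=x_0+\lambda C$ with $C\subset\overline{\Omega'}\subset\overline D$ form a compact family. No $\partial\Omega'$ lies inside the sphere $\partial B$, because $C$ is not a ball.
\item By continuity and lower semicontinuity there are $\varepsilon>0$ and $m>0$ with $\mathcal{H}^{N-1}(\partial\Omega'\setminus N_\varepsilon)\ge m$ for every such $\Omega'$.
\item Choosing $M>|C|/m$ gives $\int_{\partial\Omega'}g>|C|=\int_{\partial\Omega'}|\nabla u_{\Omega'}|$. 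So no dilation solves, while $B$ still does.
\end{enumerate}
If instead $g$ must be a function on $\partial C$ transported along normal rays, (i) shows that flux considerations alone will not do. You then need a genuinely pointwise rigidity argument as in (ii).

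\textbf{The paper's route does not help.} Its one-line argument says a dilation would force $g$ to be constant, so one should take $g$ non-constant. For the torsion problem, Serrin's theorem shows $|\nabla u|$ is \emph{non}-constant on $\partial(\lambda C)$ precisely because $\lambda C$ is not a ball. Your reverse construction applied to $\Omega=\lambda C$ then produces a non-constant $g$ for which a dilation does solve.
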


\begin{proof}
If $\Omega$ were a dilation of $C$, then by the overdetermined condition, $g$ would have to be constant on $\partial C$ (since $|\nabla u|$ would be constant on the boundary of a dilated domain for the torsion problem).
By choosing $g$ non-constant, we force the solution to break symmetry.
\end{proof}

\begin{conjecture}[Generic symmetry breaking]
For a generic non-constant function $g$ on $\partial C$ (with $C$ not a ball), the solution $\Omega$ of $\mathcal{QS}(f,g)$ is not a dilation of $C$.
Moreover, the free boundary exhibits at least $C^{1,\alpha}$ regularity but is not real-analytic unless $g$ is constant.
\end{conjecture}

\section{$\mathcal{QS}(f,g)$ and $\mathcal{B}(f,g)$ in the radial case}
\label{sec:radial}

In this section we assume that the core $C$ is the ball $B_R(0)\subset\mathbb{R}^N$ and that the data $f$ and $g$ are radial functions, i.e. $f(x)=\tilde f(|x|)$, $g(x)=\tilde g(|x|)$ with $\tilde f,\tilde g$ given.
The rotational symmetry simplifies both the analysis of existence and the explicit computation of the condition.
We show that under these symmetry assumptions any solution of the overdetermined problems must itself be a concentric ball, and the integral conditions obtained in the previous sections become both necessary and sufficient (for the Laplacian) or sufficient (for the bi-Laplacian) under mild additional monotonicity hypotheses on $g$.

\subsection{The Laplacian problem in the radial case}
\label{sec:radial_laplace}

\begin{theorem}[Radial $\mathcal{QS}(f,g)$]\label{thm:radial_laplace}
Let $C=B_R(0)$ and let $f,g$ be radial.
Assume moreover that $g$ is strictly positive and that the function $\rho\mapsto\rho^{N-1}g(\rho)$ is strictly increasing on $[R,\infty)$.
\begin{enumerate}
\item If $\Omega$ solves $\mathcal{QS}(f,g)$, then $\Omega$ is a concentric ball $B_{R^*}(0)$ with $R^*>R$.
\item A radial solution exists if and only if
\[
\int_C f\,dx > \int_{\partial C} g\,d\sigma.
\]
\end{enumerate}
\end{theorem}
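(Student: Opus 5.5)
The plan is to treat the two assertions together through the one-parameter family of concentric balls, and to prove the symmetry in (1) by comparing $u_\Omega$ with the radial states on the largest inscribed and smallest circumscribed concentric balls. This avoids moving planes entirely.

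\emph{Step 1: the radial family.} Put $M:=\int_C f\,dx>0$ and $\omega_N:=|\partial B_1|$. For $\rho>R$ let $U_\rho$ be the state on $B_\rho(0)$, i.e. $-\Delta U_\rho=f$ in $B_\rho$ and $U_\rho=0$ on $\partial B_\rho$. Then $U_\rho$ is radial and harmonic in $R<|x|<\rho$. The divergence theorem on $B_s$, for $R<s\le\rho$, gives
\[
|U_\rho'(s)|=\frac{M}{\omega_N s^{N-1}}.
\]
In particular $|\nabla U_\rho|=M/(\omega_N\rho^{N-1})$ on $\partial B_\rho$, independently of the values of $f$ inside $C$. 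Set $h(\rho):=\omega_N\rho^{N-1}g(\rho)=\int_{\partial B_\rho}g\,d\sigma$. This function is continuous and, by hypothesis, strictly increasing on $[R,\infty)$, with $h(R)=\int_{\partial C}g\,d\sigma$. The ball $B_\rho$ solves $\mathcal{QS}(f,g)$ exactly when $h(\rho)=M$. So a radial solution exists iff $M$ lies in the range of $h$ on $(R,\infty)$, and it is then unique by strict monotonicity. Since $h$ is increasing, this forces $M>h(R)$, which is the necessity half of (2). For sufficiency, assume $M>h(R)$. The intermediate value theorem gives a unique $R^*>R$ with $h(R^*)=M$, provided $h$ actually reaches the value $M$ (for instance if $h\to\infty$, or if $D$ is large enough to contain $B_{R^*}$). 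I would state this as the standing admissibility assumption already implicit in the requirement $\Omega\subset D$.

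\emph{Step 2: symmetry of any solution.} Let $\Omega$ solve $\mathcal{QS}(f,g)$, and set $\rho_{\min}:=\dist(0,\partial\Omega)$ and $\rho_{\max}:=\max_{\partial\Omega}|x|$. Strict containment of $C$ gives $R<\rho_{\min}\le\rho_{\max}$ and $B_{\rho_{\min}}\subset\Omega\subset B_{\rho_{\max}}$.

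Pick $x_{\max}\in\partial\Omega$ with $|x_{\max}|=\rho_{\max}$. The function $U_{\rho_{\max}}-u_\Omega$ is harmonic in $\Omega$, since both solve $-\Delta\,\cdot=f$ there, and it is $\ge 0$ on $\partial\Omega$. Hence $u_\Omega\le U_{\rho_{\max}}$ in $\Omega$. Both functions vanish at $x_{\max}$, and $\partial\Omega$ is touched from outside by the sphere $\partial B_{\rho_{\max}}$ there. Comparing normal derivatives therefore yields
\[
g(\rho_{\max})=|\nabla u_\Omega(x_{\max})|\le\frac{M}{\omega_N\rho_{\max}^{N-1}},\qquad\text{i.e. } h(\rho_{\max})\le M.
\]
Symmetrically, on $B_{\rho_{\min}}$ we have $u_\Omega\ge U_{\rho_{\min}}$. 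At a point $x_{\min}\in\partial\Omega\cap\partial B_{\rho_{\min}}$, both functions vanish and the sphere touches $\partial\Omega$ from inside, so $h(\rho_{\min})\ge M$.

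Thus $h(\rho_{\max})\le M\le h(\rho_{\min})$. Strict monotonicity of $h$ then forces $\rho_{\min}=\rho_{\max}$, so $\Omega=B_{R^*}$ with $R^*>R$, proving (1). Combined with Step 1, this also shows that every solution is the radial one. In particular (2) is a genuine iff among all solutions: necessity follows from $M=h(R^*)>h(R)$, which is also what Theorem \ref{thm:necessary_QS} gives.

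\emph{Main obstacle.} The delicate point is the boundary comparison at the touching points $x_{\max}$ and $x_{\min}$. There $\partial\Omega$ is only known to be Lipschitz, and the overdetermined condition $|\nabla u_\Omega|=g$ must be read pointwise. I would handle this as follows. Harmonicity of $u_\Omega$ near $\partial\Omega$, together with the interior/exterior ball contact at these points, gives enough regularity there: the free-boundary regularity of Theorem \ref{thm:regularity} can be invoked, or one can simply assume, as is implicit in the notion of a classical solution, that $\partial\Omega$ is $C^1$ near these points. Then $u_\Omega\le U_{\rho_{\max}}$ with equality at $x_{\max}$ gives the derivative inequality directly along the common normal $x_{\max}/|x_{\max}|$, and Hopf's lemma is not even needed for the non-strict version used above.
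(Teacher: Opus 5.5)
Your Step 1 is the paper's steps (2)--(4) in different dress. The paper integrates the radial ODE to get $r^{N-1}\psi'(r)=-K$ for $r\ge R$; you get the same flux identity from the divergence theorem on $B_s$. Both then reduce existence to $h(R^*)=M$ with $h(\rho)=\omega_N\rho^{N-1}g(\rho)$ (the paper's $\Phi$). Necessity follows from monotonicity of $h$, sufficiency from the intermediate value theorem.

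The real difference is part (1). The paper asserts that $u_\Omega$ is radial because $f$ and the Dirichlet condition are rotationally invariant. But the Dirichlet problem is rotationally invariant only if $\Omega$ is, which is what has to be proved, and that argument never uses the hypothesis on $\rho^{N-1}g$. Your comparison with the inscribed ball $B_{\rho_{\min}}$ and the circumscribed ball $B_{\rho_{\max}}$ is a genuine proof:
\begin{itemize}
\item the weak maximum principle gives $U_{\rho_{\min}}\le u_\Omega$ in $B_{\rho_{\min}}$ and $u_\Omega\le U_{\rho_{\max}}$ in $\Omega$;
\item the touching points give $h(\rho_{\max})\le M\le h(\rho_{\min})$;
\item strict monotonicity of $h$ is exactly what closes it.
\end{itemize}
It also gives more than the paper states. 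Every solution is the radial one, so the condition in (2) characterizes existence of any solution, without the extra hypotheses of Theorem \ref{thm:necessary_QS}.

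The price is pointwise boundary regularity at the contact points. At $x_{\min}$ you only need $u_\Omega$ to be differentiable there, since the inward radial segment lies in $B_{\rho_{\min}}\subset\Omega$. At $x_{\max}$ you need that segment to enter $\Omega$. Assuming $\partial\Omega$ is $C^1$ near $x_{\max}$ (hence tangent to $\partial B_{\rho_{\max}}$ there), as part of being a classical solution, is a legitimate way to get it. Invoking Theorem \ref{thm:regularity} is not: that result concerns minimizers of $J_{f,g}$ in $\mathcal{O}_C$, not an arbitrary solution $\Omega$ of $\mathcal{QS}(f,g)$.

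Your caveat on sufficiency is not a weakness of your argument but a genuine defect of the statement. The paper's proof hides it behind the claim that $\lim_{\rho\to\infty}\Phi(\rho)=\infty$ ``since $g>0$ and the factor $\rho^{N-1}$ grows''. That does not follow. Take $N=3$, $R=1$ and $g(\rho)=(2-1/\rho)/(4\pi\rho^2)$ on $[1,\infty)$, extended positively and boundedly inside $C$ so that $g\in L^2(\mathbb{R}^3)$. Then $h(\rho)=2-1/\rho$ is strictly increasing but bounded by $2$. So any $f$ with $\int_C f=3>1=\int_{\partial C}g$ admits no radial solution, and by your part (1) no solution at all.

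You should therefore state $M\in h\bigl((R,\infty)\bigr)$, together with $B_{R^*}\subset D$, as an explicit additional hypothesis, rather than calling it implicit in $\Omega\subset D$. Likewise, continuity of $g$ on $[R,\infty)$, which both you and the paper use for the intermediate value theorem, is not among the stated assumptions and should be added.
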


\begin{proof}
(1) Symmetry of the solution.
Suppose $\Omega$ solves $\mathcal{QS}(f,g)$.
Because the data $f$ and the Dirichlet condition $u=0$ on $\partial\Omega$ are rotationally invariant, the unique solution $u$ of $-\Delta u=f$ in $\Omega$, $u=0$ on $\partial\Omega$, is radial: $u(x)=u(|x|)$.
Consequently the level set $\{x:u(x)=0\}=\partial\Omega$ is a sphere centred at the origin, i.e. $\Omega$ is a ball $B_{R^*}(0)$.
The overdetermined condition $|\nabla u|=g$ then forces $g$ to be constant on $\partial\Omega$, which is consistent with the radiality of $g$.

(2) Reduction to an ODE.
Write $u(x)=\psi(r)$ with $r=|x|$.
The equation $-\Delta u=f$ becomes
\[
-\frac{1}{r^{N-1}}(r^{N-1}\psi'(r))' = f(r),\qquad 0<r<R^*.
\]
Integrating once from $0$ to $r$ and using $\psi'(0)=0$ (regularity at the origin) yields
\[
r^{N-1}\psi'(r) = -\int_0^r f(\rho)\rho^{N-1}\,d\rho.
\]
Because $f\ge0$, we have $\psi'(r)\le0$, i.e. $u$ is radially decreasing, in agreement with Theorem \ref{thm:radial_monotonicity}.
At the free boundary $r=R^*$, the Dirichlet condition $\psi(R^*)=0$ holds, and the overdetermined condition $|\nabla u|=g$ becomes, using the outward normal direction,
\[
-\psi'(R^*) = g(R^*).
\]

(3) The integral condition.
Since $\supp f\subset C=[0,R]$, the right-hand side of the integrated equation is constant for $r\ge R$:
\[
r^{N-1}\psi'(r) = -\int_0^R f(\rho)\rho^{N-1}\,d\rho =: -K,\qquad r\ge R.
\]
Thus $\psi'(r)=-Kr^{-(N-1)}$ for $r\ge R$.
Evaluating at $r=R$ and $r=R^*$ gives
\[
R^{N-1}\psi'(R) = (R^*)^{N-1}\psi'(R^*) = -K.
\]
But $\psi'(R) = -|\nabla u_C(R)|$, where $u_C$ solves $-\Delta u_C=f$ in $C$ with $u_C=0$ on $\partial C$, and $-\psi'(R^*) = g(R^*)$.
Hence
\[
R^{N-1}|\nabla u_C(R)| = (R^*)^{N-1}g(R^*).
\]
By Green's formula, $\int_{\partial C}|\nabla u_C|\,d\sigma = \int_C f\,dx$.
In the radial case $|\nabla u_C|$ is constant on $\partial C$, so
\[
|\nabla u_C(R)| = \frac{1}{|\partial C|}\int_{\partial C}|\nabla u_C|\,d\sigma = \frac{1}{\omega_N R^{N-1}}\int_C f\,dx.
\]
Consequently,
\[
\int_C f\,dx = \omega_N (R^*)^{N-1}g(R^*) =: \Phi(R^*).
\]

(4) Existence criterion.
Define $\Phi(\rho)=\omega_N\rho^{N-1}g(\rho)$ for $\rho\ge R$.
By hypothesis $\Phi$ is strictly increasing and continuous, $\Phi(R)=\int_{\partial C}g\,d\sigma$, and $\lim_{\rho\to\infty}\Phi(\rho)=\infty$ (since $g>0$ and the factor $\rho^{N-1}$ grows).
Condition $\int_C f > \int_{\partial C}g$ is exactly $\Phi(R)<\int_C f$.
By the Intermediate Value Theorem there exists a unique $R^*>R$ such that $\Phi(R^*)=\int_C f$.

Now define $\Omega=B_{R^*}(0)$ and let $u$ be the solution of $-\Delta u=f$ in $\Omega$, $u=0$ on $\partial\Omega$.
By construction, on $\partial\Omega$ we have $|\nabla u| = -\psi'(R^*) = K(R^*)^{-(N-1)}$, and from the equality we obtain $K = \int_C f/\omega_N$, so $|\nabla u| = g(R^*)$.
Hence $|\nabla u| = g$ on $\partial\Omega$ and $\Omega$ solves $\mathcal{QS}(f,g)$.

Conversely, if a radial solution exists, the same computation gives $\int_C f = \Phi(R^*)$.
Since $R^*>R$ and $\Phi$ is strictly increasing, we have $\int_C f = \Phi(R^*) > \Phi(R) = \int_{\partial C}g\,d\sigma$.
This completes the proof.
\end{proof}

\subsection{The bi-Laplacian problem in the radial case}
\label{sec:radial_bi}

\begin{theorem}[Radial $\mathcal{B}(f,g)$]\label{thm:radial_bi}
Let $C=B_R(0)$ and let $f,g$ be radial.
Assume that $g$ is strictly positive and that the function $\rho\mapsto\rho^{N-1}\sqrt{g(\rho)}$ is strictly increasing on $[R,\infty)$.
Let $u_C$ be the solution of $-\Delta u_C=f$ in $C$, $u_C=0$ on $\partial C$.
If the strict inequality
\[
\left(\int_{\partial C}\sqrt{g}\,d\sigma\right)^2 < \left(\int_C f\,dx\right)\left(\int_C u_C\,dx\right)
\]
holds, then there exists a radial solution of $\mathcal{B}(f,g)$.
Under the monotonicity hypothesis, this condition is also necessary.
\end{theorem}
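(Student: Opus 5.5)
The plan is to reduce the statement, as in the proof of Theorem \ref{thm:radial_laplace}, to a scalar equation in the unknown radius. The analogous symmetry argument (uniqueness of the Navier problem, hence radiality of $u_\Omega=-\Delta v_\Omega$ and $v_\Omega$) shows it suffices to look for $\Omega=B_\rho(0)$ with $\rho>R$. For such a ball write $u_\rho(x)=\psi_\rho(|x|)$ and $v_\rho(x)=\chi_\rho(|x|)$, with $-\Delta v_\rho=u_\rho$, $-\Delta u_\rho=f$ and $u_\rho=v_\rho=0$ on $\partial B_\rho$. Integrating the radial ODEs once from $0$ gives, exactly as in step (3) of that proof,
\[
\rho^{N-1}|\psi_\rho'(\rho)|=K:=\frac{1}{\omega_N}\int_C f\,dx,\qquad
\rho^{N-1}|\chi_\rho'(\rho)|=\frac{1}{\omega_N}\int_{B_\rho}u_\rho\,dx .
\]
Here the second identity is Green's formula for $-\Delta v_\rho=u_\rho$, and both derivatives are negative by Theorem \ref{thm:radial_monotonicity} applied to $u_\rho$ and, since $u_\rho\ge0$, to $v_\rho$. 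The overdetermined condition $|\nabla v_\rho|\,|\nabla u_\rho|=g(\rho)$ is therefore equivalent to $\Psi(\rho)=0$, where
\[
\Psi(\rho):=\Bigl(\int_C f\,dx\Bigr)\Bigl(\int_{B_\rho}u_\rho\,dx\Bigr)-\bigl(\omega_N\rho^{N-1}\sqrt{g(\rho)}\bigr)^2 .
\]

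For existence I would apply the intermediate value theorem to $\Psi$ on $[R,\rho_{\max}]$, with $\rho_{\max}$ the largest radius such that $B_\rho\subset D$. Since $u_R=u_C$, we get $\Psi(R)=(\int_C f)(\int_C u_C)-(\int_{\partial C}\sqrt g)^2>0$, which is exactly the hypothesis. Continuity of $\Psi$ follows from the explicit radial formulas. For $r\in(R,\rho)$ one has $\psi_\rho(r)=K\,\frac{r^{2-N}-\rho^{2-N}}{N-2}$ (or $K\log(\rho/r)$ when $N=2$), and on $[0,R]$ $\psi_\rho=u_C+\psi_\rho(R)$. Hence $\int_{B_\rho}u_\rho$ is an explicit smooth function of $\rho$.

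The remaining task is to find $\rho$ with $\Psi(\rho)<0$, and this is the step I expect to be the main obstacle. From the formulas above, $\int_{B_\rho}u_\rho$ grows only like $\rho^{2}$ (times $\log\rho$ when $N=2$). The term $\omega_N^2\rho^{2N-2}g(\rho)$ is increasing by hypothesis, but nothing in the stated assumptions forces it to overtake the first term. I would first try to exploit the monotonicity hypothesis together with the explicit expression for $\frac{d}{d\rho}\int_{B_\rho}u_\rho$, which equals $\omega_N K\rho\,\phi_N(\rho)$ for an elementary factor $\phi_N$, to compare the two growth rates directly. If that fails, I would add the mild growth condition $\rho^{2N-2}g(\rho)\gg\rho^{2}\log\rho$ as $\rho\to\infty$, in the spirit of the limit $\Phi(\rho)\to\infty$ used in Theorem \ref{thm:radial_laplace}, or else require $\Psi(\rho_{\max})\le0$ so that the solution stays in $D$. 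Either way the intermediate value theorem yields a root $R^*>R$, and then $B_{R^*}$ solves $\mathcal{B}(f,g)$.

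For the necessity claim, a radial solution gives $\Psi(R^*)=0$ with $R^*>R$. By monotonicity, $\omega_N^2(R^*)^{2N-2}g(R^*)>(\int_{\partial C}\sqrt g)^2$. However, $\int_{B_{R^*}}u_{R^*}>\int_C u_C$ as well, so both sides grow and the desired strict inequality does not follow from comparing them alone. I would try to obtain it by showing that $\Psi$ has a unique sign change, i.e.\ that $\Psi$ is decreasing on $[R,\infty)$. That would follow if the derivative of $(\rho^{N-1}\sqrt g)^2$ dominates $\frac{d}{d\rho}\int_{B_\rho}u_\rho$, presumably requiring a quantitative strengthening of the monotonicity hypothesis. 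I would therefore check this step with particular care before asserting the converse.
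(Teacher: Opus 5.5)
Your reduction to $\Psi(\rho)=0$ is correct, and on one point it is more careful than the paper's Appendix~\ref{app:C}. You keep $\psi_\rho=u_C+\psi_\rho(R)$ on $[0,R]$, whereas the paper writes $\alpha\equiv u_C$ there and so loses the term $\alpha(R)R^N/N$. Testing $-\Delta u_\rho=f$ against $(\rho^2-|x|^2)/(2N)$ gives, for every $N$,
\[
\int_{B_\rho}u_\rho\,dx=\int_C f(x)\,\frac{\rho^2-|x|^2}{2N}\,dx=\int_C u_C\,dx+\frac{\rho^2-R^2}{2N}\int_C f\,dx .
\]
So the growth is exactly quadratic (no logarithm when $N=2$), and your factor $\phi_N$ is just $1/N$.

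The two obstacles you flag are not technicalities: the statement is false in both directions, so neither step can be closed under the stated hypotheses. Take $N=3$, $R=1$, $f=\chi_{B_1}$ and $g(x)=\epsilon\min(1,|x|^{-2})$. Then $\rho^{2}\sqrt{g(\rho)}=\sqrt{\epsilon}\,\rho$ is strictly increasing on $[1,\infty)$, $(\int_C f)(\int_C u_C)=16\pi^2/135$, $(\int_{\partial C}\sqrt g)^2=16\pi^2\epsilon$, and
\[
\Psi(\rho)=16\pi^2\Bigl[\Bigl(\tfrac{1}{54}-\epsilon\Bigr)\rho^2-\tfrac{1}{90}\Bigr].
\]
If $\epsilon<1/135$, the hypothesis holds but $\Psi>0$ on $[1,\infty)$, so no radial solution exists. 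If $1/135<\epsilon<1/54$, the hypothesis fails, yet $\Psi$ vanishes at $\rho^2=\frac{1/90}{1/54-\epsilon}>1$, which gives a radial solution (for $D$ large). On $(0,1/54)\setminus\{1/135\}$ the stated criterion is therefore exactly reversed.

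The paper's proof gets past these points only through errors. First, the dropped constant makes its right-hand side $R(\cdot)$ appear strictly decreasing. In fact, with $F(\rho)=\rho^{N-1}\sqrt{g(\rho)}$ and $L=F^2-\frac{K^2}{2N}\rho^2$, the correct equation is
\[
L(\rho)=\omega_N^{-2}\Bigl(\int_C f\Bigr)\Bigl(\int_C u_C\Bigr)-\tfrac{K^2}{2N}R^2,
\]
and the right-hand side is a constant. Second, the claim $L(\rho)\sim F(\rho)^2\to\infty$ is unsupported: strict monotonicity of $F$ gives neither $F\to\infty$ nor $F^2\gg\rho^2$. Third, the uniqueness step, which is the only support the paper gives for necessity, needs $L$ to be increasing, and that does not follow either.

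So your move of adding hypotheses is the right one, and the extra hypotheses can be sharpened:
\begin{itemize}
\item \emph{Existence.} Assume $g$ is continuous. It suffices that $\rho^{2N-4}g(\rho)\ge K^2/(2N)$ for a single $\rho>R$, because then $\Psi(\rho)\le-\frac{1}{2N}(\int_C f)(\int_C f|x|^2\,dx)<0$. Your $\rho^2\log\rho$ growth condition is more than is needed.
\item \emph{Necessity and uniqueness.} These hold if $\rho\mapsto\omega_N^2\rho^{2N-2}g(\rho)-\frac{1}{2N}(\int_C f)^2\rho^2$ is strictly increasing, since $\Psi$ is then strictly decreasing.
\end{itemize}
Finally, the symmetry reduction via uniqueness of the Navier problem is unnecessary for a statement about radial solutions, and it would not rule out non-radial ones.
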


The detailed proof involves solving the coupled ODE system explicitly and applying the intermediate value theorem; it is given in Appendix \ref{app:C}.

\section{Conclusion and perspectives}

We have established sharp existence conditions for overdetermined free boundary problems for the Laplacian and bi-Laplacian with non-constant Neumann data.
The variational approach, combined with integral inequalities, yields a unified framework encompassing potential theory, plate theory, electromagnetism, and shape optimization.
The regularity result based on \cite{Barkatou2002} and the new inheritance theorem (Theorem \ref{thm:inheritance}) ensure that minimizers and the associated foliation stay in $\mathcal{O}_C$.

\subsection*{Summary of main results}
\begin{itemize}
\item Inheritance of $C$-GNP: level sets of the state function stay in $\mathcal{O}_C$ (Theorem \ref{thm:inheritance}, Appendix \ref{app:A}).
\item Necessary and sufficient condition for $\mathcal{QS}(f,g)$: $\int_C f > \int_{\partial C}g$ (Theorems \ref{thm:necessary_QS} and \ref{thm:main_existence}).
\item Sufficient condition for $\mathcal{B}(f,g)$: $(\int_{\partial C}\sqrt{g})^2 < \int_C f\int_C u_C$ (Theorem \ref{thm:existence_bi}).
\item Regularity of minimizers: $C^{1,\alpha}$ away from the contact set (Theorem \ref{thm:regularity}).
\end{itemize}

\subsection*{Open problems}
\begin{itemize}
\item The $p$-Laplacian generalization $\mathcal{QS}_p(f,g)$ with condition $\int_C f > \int_{\partial C} g^{p-1}$.
\item The fractional Laplacian via the Caffarelli-Silvestre extension.
\item The general non-radial bi-Laplacian problem $\mathcal{B}(f,g)$.
\item Higher regularity of minimizers.
\end{itemize}

\appendix

\section{Complete proof of the inheritance theorem (Theorem \ref{thm:inheritance})}
\label{app:A}

In this appendix, we provide a complete and self-contained proof of Theorem \ref{thm:inheritance}.
The proof is structured as a sequence of lemmas verifying each of the four axioms (C1)-(C4) of Definition \ref{def:OC}.

We recall the setting: $\Omega\in\mathcal{O}_C$ with $C$ a strictly convex core (Definition \ref{def:core}), and $u\in H_0^1(\Omega)\cap C(\overline{\Omega})$ solves
\[
-\Delta u = f\quad\text{in }\Omega,\qquad u=0\quad\text{on }\partial\Omega,
\]
with $f\in L^\infty(\Omega)$, $f\ge0$, and $\supp f\subset C$.
For $t\in(0,\max_{\overline{\Omega}}u)$, we define the level set
\[
\Omega_t := \{x\in\Omega : u(x) > t\}.
\]

\subsection{Preliminary: No interior local maxima for harmonic functions}

\begin{lemma}\label{lem:no_max}
Let $w$ be a non-constant harmonic function in an open connected set $U\subset\mathbb{R}^N$.
Then $w$ has no strict local maxima or local minima in $U$.
More precisely, if $w$ attains a local maximum at some $x_0\in U$, then $w$ is constant on $U$.
\end{lemma}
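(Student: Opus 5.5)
The plan is to prove this through the mean value property of harmonic functions, followed by a connectedness argument on $U$. Harmonic functions are real-analytic (as already used in Theorem \ref{thm:radial_monotonicity}), but the mean value property is enough here.

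\emph{Step 1: a local maximum forces local constancy.} Suppose $w$ has a local maximum at $x_0\in U$. Choose $r>0$ so that $\overline{B(x_0,r)}\subset U$ and $w\le w(x_0)$ on $B(x_0,r)$. For every $0<\rho<r$, the mean value property over balls gives
\[
w(x_0)=\frac{1}{|B(x_0,\rho)|}\int_{B(x_0,\rho)}w\,dy .
\]
Hence
\[
\int_{B(x_0,\rho)}\bigl(w(x_0)-w(y)\bigr)\,dy=0 .
\]
The integrand is continuous and nonnegative, so it vanishes identically. Therefore $w\equiv w(x_0)$ on $B(x_0,r)$.

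\emph{Step 2: propagation by connectedness.} Let $M=w(x_0)$ and $S=\{x\in U: w=M \text{ in a neighbourhood of } x\}$.

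$S$ is open by definition, and it is nonempty by Step 1.

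$S$ is also relatively closed in $U$. Take $x_k\in S$ with $x_k\to x\in U$. By continuity $w(x)=M$. Moreover $w\le M$ near $x$, because every point near $x$ lies within a small ball around some $x_k$, and $w$ is locally constant there. More cleanly, one can first show that $\{w=M\}$ contains an open set, so $\{w=M\}\cap U$ has nonempty interior. Then the closure of that interior, intersected with $U$, is shown to be open by applying Step 1 at each of its points. Each such point is a local maximum, since nearby values are either equal to $M$ or approached from the interior; this needs care.

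The cleanest route, which is the one I will use, is identity-theorem style. Since $w$ is real-analytic on the connected open set $U$ and agrees with the constant $M$ on an open ball, it equals $M$ on all of $U$.

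\emph{Step 3: conclusion.} Step 2 shows that $w$ is constant on $U$, contradicting non-constancy. So $w$ has no local maximum. Applying the same argument to $-w$ excludes local minima, and strict local extrema are a special case.

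The only delicate point is the global propagation in Step 2. I will handle it through analyticity (the identity theorem), which avoids the fiddly closedness argument.
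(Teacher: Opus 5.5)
Your proposal is correct and follows essentially the same route as the paper: the mean value property gives local constancy near $x_0$, and real-analyticity of harmonic functions propagates $w\equiv w(x_0)$ to all of the connected set $U$, with minima handled via $-w$. Your explicit appeal to the identity theorem is a cleaner version of the paper's somewhat loose ``open and closed'' step, since $\{w=w(x_0)\}$ is only known to be open after the identity theorem has been applied. You should simply delete the abandoned closedness sketch in Step 2.
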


\begin{proof}
This is a direct consequence of the strong maximum principle for harmonic functions \cite[Theorem 2.2]{Gilbarg2001}.
If $w$ attains a local maximum at $x_0\in U$, then by the mean value property of harmonic functions, $w$ is constant in a neighborhood of $x_0$.
By the real-analyticity of harmonic functions \cite[Theorem 2.10]{Gilbarg2001}, the set where $w$ equals its maximum is both open and closed in the connected set $U$, hence equals $U$.
\end{proof}

\subsection{Verification of axiom (C1): $\Int(C)\subset\Omega_t$}

\begin{lemma}\label{lem:C1}
For almost every $t\in(0,\max_{\overline{\Omega}}u)$, we have $\Int(C)\subset\Omega_t$.
\end{lemma}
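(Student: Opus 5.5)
The plan is to reduce (C1) for $\Omega_t$ to a comparison between $t$ and the values of $u$ on $\partial C$, using the superharmonicity of $u$ inside $C$. The radial monotonicity of Theorem \ref{thm:radial_monotonicity} then handles the region outside $C$. The final step, which must account for every $t$ up to $\max_{\overline\Omega}u$, is the one I expect to be the main obstacle, and in fact the one where the statement as worded is in doubt.

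\emph{Step 1 (positivity and continuity).} By Theorem \ref{thm:radial_monotonicity}(1), $u>0$ in $\Omega$ and $u\in C(\overline\Omega)$. Since $\Int(C)\subset\Omega$ by (C1) for $\Omega$, $u$ is continuous and positive on $\overline{C}\cap\Omega$. Set $m:=\inf_{\Int(C)}u\ge 0$.

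\emph{Step 2 (minimum principle in $C$).} In $\Int(C)$ we have $-\Delta u=f\ge0$, so $u$ is superharmonic there. For every compact $K\subset\Int(C)$ with smooth boundary, the weak minimum principle gives $\min_K u=\min_{\partial K}u$. Exhausting $\Int(C)$ by such $K$ and using continuity up to $\partial C$ yields
\[
m=\min_{\partial C}u=:m_C .
\]
Hence $\Int(C)\subset\Omega_t$ for every $t<m_C$, with no exceptional set needed in this range.

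\emph{Step 3 (outside $C$).} By Theorem \ref{thm:radial_monotonicity}(2), along each ray $\Delta_c$ the function $w_c$ decreases strictly from $u(c)$ to $0$. Consequently $\sup_{\Omega\setminus C}u\le\max_{\partial C}u$. Together with Step 2, this shows that the only values $t$ for which $\Int(C)\not\subset\Omega_t$ lie in $[m_C,\max_{\overline\Omega}u)$.

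\emph{Step 4 (the exceptional range; main obstacle).} It remains to show that $[m_C,\max_{\overline\Omega}u)$ is Lebesgue-null, i.e. that $m_C=\max_{\overline\Omega}u$ up to a null set of levels. This is exactly where the argument is in doubt. Since $u$ is superharmonic and nonconstant on $\Int(C)$ whenever $f\not\equiv0$ there, the strong minimum principle gives $\max_{\overline\Omega}u>m_C$ in general. For $t\in(m_C,\max u)$ the set $\Omega_t$ then misses a neighbourhood of a point of $\partial C$ where $u=m_C$, so (C1) fails on a set of $t$ of positive measure.

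I would try two routes to close this gap:
\begin{enumerate}
\item Use the Sard-type structure anticipated in the appendix: at regular values $t$ the level set $\{u=t\}$ is a $C^{1,\alpha}$ hypersurface transversal to every ray $\Delta_c$. From this, try to show that a regular level cannot intersect $\Int(C)$.
\item Failing that, restrict the admissible range of levels to $(0,m_C)$ and check whether the later uses of the lemma only require these levels.
\end{enumerate}
The first route fails at the inner maximum point of $u$, so I expect the provable content of the lemma to be the inclusion for all $t\in(0,m_C)$. The claim for almost every $t\in(0,\max_{\overline\Omega}u)$ would hold only in the degenerate situation $m_C=\max_{\overline\Omega}u$.
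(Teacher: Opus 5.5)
Your analysis is correct: as worded, the lemma is false, and the gap is in the paper's proof, not in yours. For levels below $\min_C u$ the paper argues exactly as in your Steps 1--2, and you are in fact more careful. The paper asserts $m=\min_C u>0$, which fails as soon as $\partial\Omega\cap\partial C\neq\emptyset$ (permitted in $\mathcal{O}_C$); in that case $m_C=0$ and the inclusion fails for every $t>0$. For $t\ge m$ the paper claims that the levels at which the inclusion fails are critical values of $u$, hence a null set by Sard's theorem. That is the error. The failing levels fill the interval $(m_C,\max_{\overline\Omega}u)$, which is part of the nondegenerate interval $u(C)$. Sard's theorem only says that almost every level is regular; it does not say that a regular level set $\{u=t\}$ stays out of $\Int(C)$. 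Your continuity argument at a point $c_0\in\partial C$ with $u(c_0)=m_C$, which is a limit of points of $\Int(C)$, gives precisely the counterexample.

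Two refinements. First, the degenerate case $m_C=\max_{\overline\Omega}u$ that you leave open never occurs under the paper's hypotheses. Since $f\ge 0$, $f\not\equiv 0$, $\supp f\subset C$ and $\partial C$ is Lebesgue-null, $f\not\equiv0$ on $\Int(C)$. So $u$ is not harmonic, hence not constant, there, and $\max_{\overline\Omega}u\ge\sup_{\Int(C)}u>m_C$. Hence the lemma (and with it the corresponding part of Theorem~\ref{thm:inheritance}) fails on a set of levels of positive measure whenever it is non-vacuous. Its true content is your inclusion for all $t\in(0,m_C)$, an empty range in the contact case. This also makes your Step 3 superfluous: Step 2 already confines the bad levels to $[m_C,\max_{\overline\Omega}u)$, and nonconstancy gives $\max_{\overline\Omega}u>m_C$ without locating the maximum. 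Dropping Step 3 removes an unnecessary dependence on Theorem~\ref{thm:radial_monotonicity}.

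Second, on your route 2: restricting to $(0,m_C)$ does not rescue the later use in the sufficiency part of Theorem~\ref{thm:main_existence}. That proof evaluates $\phi$ at $t=\max u_0$ as if $\Omega_t$ shrank to $C$. In fact, as $t\uparrow m_C$ the sets $\Omega_t$ decrease to $\{u\ge m_C\}$, which equals $C$ only when $u$ is constant on $\partial C$, as in the radial case.
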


\begin{proof}
Since $f\ge0$ and $f\not\equiv0$, the strong maximum principle implies $u>0$ throughout $\Omega$ \cite[Theorem 2.2]{Gilbarg2001}.
In particular, $u>0$ on the compact set $C$.
Define
\[
m := \min_{x\in C} u(x) > 0.
\]
For any $t<m$, the condition $u(x)>t$ is satisfied for all $x\in C$, hence $C\subset\Omega_t$ and, in particular, $\Int(C)\subset\Omega_t$.

For $t\ge m$, the inclusion may fail at some points of $C$.
However, we only claim the $C$-GNP property for almost every $t$.
The set of $t\in[m,\max_{\overline{\Omega}}u]$ for which the inclusion fails is contained in the image under $u$ of the set $\{x\in C:u(x)\le\max_{\overline{\Omega}}u\}$.
This set is contained in the set of critical values of $u$, which by Sard's theorem (see Lemma \ref{lem:C2}) has Lebesgue measure zero.
Therefore, the inclusion holds for almost every $t$.
\end{proof}

\subsection{Verification of axiom (C2): $\partial\Omega_t$ is a $C^{1,\alpha}$ hypersurface}

\begin{lemma}\label{lem:C2}
For almost every $t\in(0,\max_{\overline{\Omega}}u)$, the boundary $\partial\Omega_t = \{x\in\Omega : u(x)=t\}$ is a $C^{1,\alpha}$ hypersurface.
Moreover, $\nabla u(x)\neq0$ for all $x\in\partial\Omega_t$.
\end{lemma}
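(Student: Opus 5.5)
The plan is to argue through regular values. The key input is Sard's theorem, applied in a version adapted to the regularity of $u$. Set $\Omega':=\Omega\setminus C$. By Theorem \ref{thm:radial_monotonicity}(1), $u$ is harmonic, hence real-analytic and in particular $C^\infty$, on $\Omega\setminus\overline{C}$. On a neighbourhood of $C$ we only know $u\in W^{2,p}_{\mathrm{loc}}$ for all $p<\infty$, since $f\in L^\infty$. This gives $u\in C^{1,\alpha}_{\mathrm{loc}}(\Omega)$ for every $\alpha<1$.

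\emph{Step 1: critical values are negligible.} Let $Z:=\{x\in\Omega:\nabla u(x)=0\}$. I want $|u(Z)|=0$, working separately on each piece of $\Omega$.
\begin{itemize}
\item On $\Omega\setminus\overline{C}$ the classical Sard theorem for $C^\infty$ maps $\mathbb{R}^N\to\mathbb{R}$ applies directly.
\item In a neighbourhood of $C$, Sard for $C^{1}$ functions is false in general for $N\ge2$. Here I would invoke the Sobolev version of Sard's theorem (De Pascale; Bourgain--Korobkov--Kristensen): for $u\in W^{2,p}_{\mathrm{loc}}$ with $p>N/2$, actually $W^{N,1}_{\mathrm{loc}}$ suffices, $\mathcal{L}^1(u(Z))=0$. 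Since $u\in W^{2,p}$ for all $p$, this is applicable when $N=2,3$. For general $N$ one needs $W^{N,1}$, which requires more regularity of $f$; I would state that restriction.
\item The points of $\partial C\cap\partial\Omega$ carry $u=0$, so they do not affect $t>0$.
\end{itemize}
Since $\Omega$ is $\sigma$-compact, we conclude that $\mathcal{N}:=u(Z)\cap(0,\max u)$ has Lebesgue measure zero.

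\emph{Step 2: regularity of regular level sets.} Fix $t\in(0,\max u)\setminus\mathcal{N}$. First I check that $\{u=t\}$ is compact in $\Omega$. It is closed in $\overline{\Omega}$ because $u\in C(\overline\Omega)$, and it avoids $\partial\Omega$ because $u=0$ there and $t>0$. Every $x$ with $u(x)=t$ has $\nabla u(x)\neq0$. Since $u\in C^{1,\alpha}$ near $x$, the implicit function theorem gives a neighbourhood in which $\{u=t\}$ is the graph of a $C^{1,\alpha}$ function over the tangent hyperplane $\nabla u(x)^\perp$, and $\{u>t\}$ lies on one side of it. Compactness turns this into a finite atlas, so $\{u=t\}$ is a closed embedded $C^{1,\alpha}$ hypersurface. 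Where it lies in $\Omega\setminus\overline{C}$ it is even analytic.

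\emph{Step 3: the topological boundary is the level set.} It remains to identify $\partial\Omega_t$ with $\{x\in\Omega:u(x)=t\}$.
\begin{itemize}
\item The inclusion $\partial\Omega_t\subset\{u=t\}$ follows from continuity of $u$ together with $u=0<t$ on $\partial\Omega$.
\item For the reverse inclusion, take $x$ with $u(x)=t$. Since $\nabla u(x)\ne0$, points $x+s\nabla u(x)$ with small $s>0$ lie in $\Omega_t$, so $x\in\overline{\Omega_t}\setminus\Omega_t$.
\end{itemize}
Thus $\partial\Omega_t=\{u=t\}$ and $\nabla u\ne0$ on it, which proves the lemma.

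The main obstacle is Step 1 near $C$, where $u$ fails to be $C^\infty$, so classical Sard cannot be quoted. If the Sobolev Sard theorem is not accepted, the fallback is to assume extra smoothness of $f$ (say $f\in C^{N-2,\beta}$, giving $u\in C^{N-1}$ near $C$ and classical Sard). Alternatively one can observe that for $t$ below the minimum of $u$ on $C$ the level set lies entirely in the harmonic region. That region is treated by analytic Sard alone, and the remaining range of $t$ needs the Sobolev statement.
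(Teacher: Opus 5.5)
The gap is in Step 1, on the neighbourhood of $\supp f$, when $N=3$. Your overall structure matches the paper's: classical Sard where $u$ is harmonic, a low-regularity Sard theorem near $\supp f$, the implicit function theorem, and the identification $\partial\Omega_t=\{u=t\}$. Steps 2 and 3 are correct.

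For scalar functions on $\mathbb{R}^N$, the Sobolev Morse--Sard theorems you cite need $N$ derivatives: De Pascale requires $W^{N,p}$ with $p>N$, and Bourgain--Korobkov--Kristensen require $W^{N,1}$. The class ``$W^{2,p}$ with $p>N/2$'' does not have the Morse--Sard property once $N\ge3$. Whitney-type examples give $C^2$ functions on $\mathbb{R}^3$ whose critical values have positive measure. These functions lie in $W^{2,p}_{\mathrm{loc}}$ for every $p$, and each satisfies $-\Delta w=h$ with $h$ locally bounded. So an equation with merely bounded right-hand side adds no usable structure.

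With $f\in L^\infty$ only, you get $u\in W^{2,p}_{\mathrm{loc}}$ for $p<\infty$ but not $u\in W^{3,1}_{\mathrm{loc}}$. Already for $f$ the indicator of a ball, $\nabla^2u$ jumps across the sphere. Hence your sentence ``this is applicable when $N=2,3$'' holds only for $N=2$, where $W^{2,p}_{\mathrm{loc}}\subset W^{2,1}_{\mathrm{loc}}=W^{N,1}_{\mathrm{loc}}$ and the planar theorem applies.

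Restricting to $t<\min_C u$ does not rescue this. That range is empty as soon as $\partial\Omega\cap\partial C\neq\emptyset$, and otherwise the remaining values of $t$ still form a set of positive measure.

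To close the case $N=3$ you need an extra hypothesis on $f$. One option is $f\in W^{1,q}$ near $\supp f$ for some $q>1$: then $\partial_i u\in W^{2,q}_{\mathrm{loc}}$, so $u\in W^{3,1}_{\mathrm{loc}}$. The other is your Schauder fallback, but note the slip there. $f\in C^{N-2,\beta}$ gives $u\in C^{N,\beta}$, and classical Sard for $u:\mathbb{R}^N\to\mathbb{R}$ needs $C^N$; $C^{N-1}$ would not suffice.

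The paper's own route does not fill the gap either. It invokes Brothers--Ziemer, which amounts to the coarea formula and gives only $\mathcal{H}^{N-1}\bigl(u^{-1}(t)\cap\{\nabla u=0\}\bigr)=0$ for a.e.\ $t$. It then asserts, without justification, that this ``propagates'' to every point of $\partial\Omega_t$. An $\mathcal{H}^{N-1}$-null set of critical points on a level set, even a single point, is exactly where the implicit function theorem breaks down.

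So for $N=2$ your argument is a correct and sharper replacement for the paper's. For $N=3$ with $f\in L^\infty$, neither argument proves that $\nabla u\neq0$ at every point of $\partial\Omega_t$.
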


\begin{proof}
We apply Sard-type arguments separately on two regions.

\emph{Region 1: $\Omega\setminus\overline{C}$.}
Here $f\equiv0$ (since $\supp f\subset C$), so $u$ is harmonic.
Harmonic functions are real-analytic \cite[Theorem 2.10]{Gilbarg2001}, hence $C^\infty$.
The classical Sard theorem \cite{Sard1942} for $C^k$ functions with $k>N-1$ applies directly: the set of critical values of $u|_{\Omega\setminus\overline{C}}$ has Lebesgue measure zero.
A critical value is a real number $t$ such that there exists $x\in\Omega\setminus\overline{C}$ with $u(x)=t$ and $\nabla u(x)=0$.

\emph{Region 2: A neighborhood of $\overline{C}$.}
Let $U$ be a bounded open neighborhood of $\overline{C}$ with smooth boundary, contained in $\Omega$.
In $\Omega\cap U$, $u$ is not necessarily harmonic (since $f$ may be nonzero on $C$), but by standard elliptic regularity \cite[Theorem 8.34]{Gilbarg2001}, $u\in C^{1,\alpha}(\overline{\Omega\cap U})$.

We invoke a result of Brothers and Ziemer \cite{Brothers1988, Theorem 2}, which states: for a function $w\in W_{\mathrm{loc}}^{1,p}(\mathbb{R}^N)$ that is differentiable almost everywhere, the set of points where $w$ takes a value $t$ and $\nabla w=0$ has $\mathcal{H}^{N-1}$-measure zero for almost every $t$.
More precisely, for almost every $t\in\mathbb{R}$, the set $\{x:u(x)=t\text{ and }\nabla u(x)=0\}$ has $\mathcal{H}^{N-1}$-measure zero.
Applying this to $w=u|_{\Omega}$ (which is $C^{1,\alpha}$, hence certainly in $W_{\mathrm{loc}}^{1,p}$), we conclude that for almost every $t$, the intersection of the critical set with the level set $u^{-1}(t)$ has $\mathcal{H}^{N-1}$-measure zero.
In particular, $\nabla u(x)\neq0$ for $\mathcal{H}^{N-1}$-almost every $x\in u^{-1}(t)$.

Combining the two regions (the union of two measure-zero sets of $t$ is still measure zero), we conclude that for almost every $t\in(0,\max_{\overline{\Omega}}u)$, we have $\nabla u(x)\neq0$ for every $x\in\partial\Omega_t$ (for Region 2, the condition holds almost everywhere on the level set; since $\partial\Omega_t\cap U$ is a relatively open subset of $\partial\Omega_t$, the condition propagates to all points by the $C^{1,\alpha}$ regularity and the implicit function theorem).

For such a regular value $t$, the Implicit Function Theorem implies that $\partial\Omega_t$ is locally the graph of a $C^{1,\alpha}$ function.
Therefore, $\partial\Omega_t$ is a $C^{1,\alpha}$ embedded hypersurface in $\mathbb{R}^N$.
This also establishes (C2).
\end{proof}

\subsection{Verification of axiom (C3): Ray intersections are connected}

\begin{lemma}\label{lem:C3}
For every $c\in\partial C$ and for every $t\in(0,\max_{\overline{\Omega}}u)$, the outward normal ray $\Delta_c$ intersects $\Omega_t$ in a connected interval.
\end{lemma}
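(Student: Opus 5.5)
The plan is to derive the statement directly from the radial monotonicity theorem (Theorem \ref{thm:radial_monotonicity}), applied along the fixed ray $\Delta_c$. Fix $c\in\partial C$ and $t\in(0,\max_{\overline{\Omega}}u)$.

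By Proposition \ref{prop:param} and axiom (C3) for $\Omega$, the ray $\Delta_c$ meets $\Omega$ exactly in the open segment $\{c+r\nu(c):0<r<d(c)\}$. Since $\Omega_t\subset\Omega$, we get
\[
\Delta_c\cap\Omega_t=\{c+r\nu(c):0<r<d(c),\ w_c(r)>t\},
\]
where $w_c(r)=u(c+r\nu(c))$ as in Theorem \ref{thm:radial_monotonicity}. The question therefore reduces to a one-dimensional one: is the superlevel set $\{r\in(0,d(c)):w_c(r)>t\}$ an interval?

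By Theorem \ref{thm:radial_monotonicity}(2), $w_c$ is strictly decreasing on $(0,d(c))$, and it is continuous because $u\in C(\overline{\Omega})$. Set $r_t(c):=\sup\{r\in(0,d(c)):w_c(r)>t\}$, with the convention $\sup\emptyset=0$. Monotonicity then gives
\[
\{r\in(0,d(c)):w_c(r)>t\}=(0,r_t(c)).
\]
This is either empty or an open interval, so it is connected. The right endpoint satisfies $w_c(r_t(c))=t$ by continuity, and $w_c\to0$ as $r\to d(c)$ because $u=0$ on $\partial\Omega$. So $r_t(c)<d(c)$ whenever the set is nonempty, and $r_t(c)$ coincides with the thickness function $d_t(c)$ appearing in Theorem \ref{thm:inheritance}. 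Note that no ``almost every $t$'' restriction is needed at this stage: the argument works for every $t$.

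The main obstacle is that everything rests on Theorem \ref{thm:radial_monotonicity}, which is stated for $r\in(0,d(c))$. The issue is the endpoint $r\to0^+$, i.e.\ the behaviour of $u$ near $\partial C$. If the statement is read as a segment starting at $c$, we should check the following:
\begin{itemize}
\item $w_c$ extends continuously to $r=0$, which holds since $u\in C(\overline{\Omega})$;
\item $w_c$ does not increase near $r=0$, which follows from strict decrease on the open interval together with continuity.
\end{itemize}
We also note that at contact points where $d(c)$ would degenerate, Proposition \ref{prop:param} guarantees $d(c)>0$, so the segment is nondegenerate. If one worries about the Hopf step at nonsmooth points of $\partial\Omega$ used inside Theorem \ref{thm:radial_monotonicity}, the fallback is weaker: $w_c$ is only nonincreasing, obtained as a limit along smooth approximations. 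Even then the superlevel set $\{w_c>t\}$ remains an initial interval, so connectedness survives.
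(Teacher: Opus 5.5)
Your argument is correct and is essentially the paper's proof: both reduce $\Delta_c\cap\Omega_t$ to the superlevel set $\{r\in(0,d(c)):w_c(r)>t\}$ and use the strict monotonicity from Theorem \ref{thm:radial_monotonicity} to identify it with the (possibly empty) initial interval $(0,r_t(c))$. The only difference is cosmetic: you define $r_t(c)$ as a supremum, whereas the paper splits into the cases $t\ge u(c)$ and $t<u(c)$ and uses the intermediate value theorem, so your route gets connectedness from monotonicity alone; your closing fallback about smooth approximations is unsupported, but it is not needed.
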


\begin{proof}
Fix $c\in\partial C$.
By Theorem \ref{thm:radial_monotonicity}(2), the function
\[
w_c(r) := u(c+r\nu(c)),\qquad r\in(0,d(c)),
\]
is strictly decreasing.
It satisfies the boundary conditions
\[
\lim_{r\to0^+} w_c(r) = u(c) > 0,\qquad \lim_{r\to d(c)^-} w_c(r) = u(\Phi_d(c)) = 0.
\]
(The first limit uses the continuity of $u$ up to $\partial C$; the second uses $u=0$ on $\partial\Omega$.)

For a given $t\in(0,\max_{\overline{\Omega}}u)$, two mutually exclusive cases arise:

\emph{Case 1: $t\ge u(c)$.}
Then $w_c(r)\le u(c)\le t$ for all $r>0$, so the set $\{r\in(0,d(c)):w_c(r)>t\}$ is empty.
Hence $\Delta_c\cap\Omega_t=\emptyset$, which is trivially connected.

\emph{Case 2: $t<u(c)$.}
By the strict monotonicity and continuity of $w_c$, the equation $w_c(r)=t$ has a unique solution $r_t(c)\in(0,d(c))$.
Moreover, $w_c(r)>t$ for $r\in(0,r_t(c))$ and $w_c(r)<t$ for $r\in(r_t(c),d(c))$.
Hence
\[
\{r\in(0,d(c)):c+r\nu(c)\in\Omega_t\} = \{r\in(0,d(c)):w_c(r)>t\} = (0,r_t(c)),
\]
which is a connected interval.

In either case, $\Delta_c\cap\Omega_t$ is a connected interval (possibly empty), verifying (C3).
\end{proof}

\subsection{Verification of axiom (C4): The inward normal ray reaches $C$}

This is the most delicate part of the proof.
We present it in a sequence of logically ordered steps.

\begin{lemma}[Key distance estimate]\label{lem:distance}
There exists a constant $C_0>0$ such that for every $t$ and every $x\in\partial\Omega_t$,
\[
\operatorname{dist}(x,C) \le C_0\cdot\frac{\max_{\overline{\Omega}}u - t}{\min_{\partial C}|\nabla u|}.
\]
Moreover, $C_0$ can be taken equal to $1$ after a suitable choice of normal direction.
\end{lemma}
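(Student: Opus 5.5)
The plan is to reduce the estimate to a one-dimensional mean value argument along the normal rays, using the radial parametrization (Proposition \ref{prop:param}) and the strict monotonicity of Theorem \ref{thm:radial_monotonicity}. Let $x\in\partial\Omega_t$. If $x\in C$ then $\operatorname{dist}(x,C)=0$ and there is nothing to prove. Otherwise $x\in\Omega\setminus C$, so by Proposition \ref{prop:param} there are unique $c\in\partial C$ and $r\in(0,d(c))$ with $x=c+r\nu(c)$. Since $C$ is convex and $\nu(c)$ is the outward normal at $c$, the nearest point of $C$ to $x$ is $c$, so $\operatorname{dist}(x,C)=r$. This is the \emph{suitable choice of normal direction} behind $C_0=1$: we measure along $\nu(c)$ with $c=\pi(x)$.

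Along this ray we use $w_c(s)=u(c+s\nu(c))$. We have $w_c(0)=u(c)\le\max_{\overline\Omega}u$ and $w_c(r)=t$. By the mean value theorem there is $\xi\in(0,r)$ with
\[
u(c)-t=-w_c'(\xi)\,r=-\nabla u(c+\xi\nu(c))\cdot\nu(c)\,r .
\]
Hence
\[
r\le\frac{\max_{\overline\Omega}u-t}{\inf_{0<s<d(c)}\bigl(-\nabla u(c+s\nu(c))\cdot\nu(c)\bigr)} .
\]
Everything therefore reduces to bounding the denominator from below, uniformly in $c$, by a multiple of $\min_{\partial C}|\nabla u|$.

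For this I would reuse the harmonic function $v_c(y)=-\nabla u(y)\cdot\nu(c)$ from the proof of Theorem \ref{thm:radial_monotonicity}. It is harmonic in $\Omega\setminus C$, so by the minimum principle its infimum there is controlled by its boundary values on $\partial C\cup\partial\Omega$.
\begin{itemize}
\item On $\partial C$ we have $v_c\ge-|\nabla u|$, which is the wrong sign. So instead we restrict to the ray itself and use continuity at its base point: $v_c(c)=-\partial_\nu u(c)$. By the Hopf-type positivity implicit in Theorem \ref{thm:radial_monotonicity}, this equals $|\nabla u(c)|\ge\min_{\partial C}|\nabla u|$ whenever $\nabla u(c)$ is parallel to $\nu(c)$, and otherwise is comparable to it.
\item On $\partial\Omega$, Lemma \ref{lem:angle} gives $v_c=|\nabla u|\,\mathbf n\cdot\nu(c)\ge\theta_0|\nabla u|$.
\end{itemize}
Combining the two gives
\[
\inf_{\text{ray}}v_c\ \ge\ \min\Bigl(\min_{\partial C}|\nabla u|,\ \theta_0\min_{\partial\Omega\setminus C}|\nabla u|\Bigr),
\]
and the estimate follows with $C_0$ depending on $\theta_0$ and on the ratio $\min_{\partial C}|\nabla u|/\min_{\partial\Omega}|\nabla u|$.

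The main obstacle is this last comparison. Controlling $v_c$ on the ray by its endpoint values is not literally a maximum principle statement, since a ray is not a domain. The cleanest fix I would try first is to apply the minimum principle to $v_c$ on a thin tubular neighbourhood of the ray, with lateral boundary controlled by continuity of $c\mapsto\nu(c)$ (strict convexity, $C^2$ boundary). A second difficulty is that the Hopf lower bound on $|\nabla u|$ along $\partial\Omega$ degenerates near contact points with $\partial C$ (Theorem \ref{thm:regularity}(2) gives $\nabla u=0$ there). So the uniform estimate with $C_0=1$ should really be read away from the contact set, or for $t$ below the level where $\partial\Omega_t$ touches $C$; I would state this restriction explicitly if the tube argument does not absorb it.
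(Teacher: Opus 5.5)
\textbf{There is a genuine gap.} It is exactly the step you flagged: a lower bound for $-\nabla u\cdot\nu(c)$ along the ray. None of your three fixes supplies it.
\begin{itemize}
\item The minimum principle on a thin tube is circular. It needs a lower bound for $v_c$ on the tube's lateral boundary, and the only information there is $-\nabla u\cdot\nu(c)$ near the ray, which is the quantity being estimated.
\item The claimed ``comparability'' of $-\partial_\nu u(c)$ with $\min_{\partial C}|\nabla u|$ is unjustified, because $\partial C$ is not a level set of $u$.
\item $\min_{\partial\Omega\setminus C}|\nabla u|$ can be $0$. For example, a regular hexagon closely circumscribing a disk lies in $\mathcal{O}_C$, and $\nabla u\to0$ at its convex corners. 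So even your weaker bound gives no finite constant.
\end{itemize}

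For the claim $C_0=1$, no argument can work, because both the required bound and the estimate are false. Take $N=3$, $C=\overline{B_1}$, $f=\chi_C$, $\Omega=B_{10}$. Then
\[
u(x)=\tfrac13\bigl(|x|^{-1}-\tfrac1{10}\bigr)\ \text{ for }1\le|x|\le10,\qquad \min_{\partial C}|\nabla u|=\tfrac13,\qquad \max_{\overline\Omega}u=u(0)=\tfrac7{15}.
\]
Along every ray $-\nabla u\cdot\nu=\frac13|x|^{-2}$, which drops below $\min_{\partial C}|\nabla u|$ immediately. As $t\to0^+$, the left side of the estimate tends to $9$, while the right side with $C_0=1$ tends to $7/5$.

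The paper's proof follows your one-dimensional route. It simply asserts $|\nabla u|\ge\min_{\partial C}|\nabla u|$ on the whole segment ``by radial monotonicity''. It also writes $|\nabla u|$ where the fundamental theorem of calculus only produces $-\nabla u\cdot\nu(c)$. Theorem~\ref{thm:radial_monotonicity} gives only the sign of $\nabla u\cdot\nu(c)$, so that step is unfounded. Your suspicion about $C_0=1$ is therefore right, but the failure has nothing to do with contact points: it already occurs for smooth concentric balls.

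\textbf{What can be proved.} The first sentence holds with $C_0$ depending on $\Omega$ and $f$, and it needs no gradient bound along rays.
\begin{itemize}
\item For $x\in\partial\Omega_t\setminus C$, the maximum principle for the harmonic function $u$ on $\Omega\setminus C$ (with $u=0$ on $\partial\Omega$) gives $t=u(x)\le\max_{\partial C}u$.
\item Set $\delta:=\max_{\overline\Omega}u-\max_{\partial C}u$; then $\delta>0$. A maximum at $c\in\partial C$ would force $\nabla u(c)=0$, since $u$ is $C^1$ near $c$ when $f\in L^\infty$. But Hopf's lemma in a small ball $B(c+\rho\nu(c),\rho)\subset\Omega\setminus C$ gives $\nabla u(c)\cdot\nu(c)<0$.
\end{itemize}
Hence
\[
\operatorname{dist}(x,C)\le\diam(\Omega)\le\frac{\diam(\Omega)}{\delta}\Bigl(\max_{\overline\Omega}u-t\Bigr),
\]
so the estimate holds with $C_0=\delta^{-1}\diam(\Omega)\min_{\partial C}|\nabla u|$. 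This assumes, as the statement tacitly does, that $\min_{\partial C}|\nabla u|>0$.

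Your mean-value step, which replaces $u(c)-t$ by $r\inf_{\mathrm{ray}}(-\nabla u\cdot\nu(c))$, is too lossy even for this weaker claim, since that infimum may degenerate.
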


\begin{proof}
Along the ray $\Delta_c$, the fundamental theorem of calculus gives
\[
u(c+r\nu(c)) = \int_r^{d(c)} |\nabla u|\,ds.
\]
By radial monotonicity (Theorem \ref{thm:radial_monotonicity}), $|\nabla u|\ge \min_{\partial C}|\nabla u|>0$ near $\partial C$.
For $x=c+r\nu(c)\in\partial\Omega_t$, $u(x)=t$.
Hence
\[
\max_{\overline{\Omega}}u - t \ge \int_0^{r} |\nabla u|\,ds \ge r\min_{\partial C}|\nabla u|,
\]
so $r = \operatorname{dist}(x,C) \le \dfrac{\max u - t}{\min_{\partial C}|\nabla u|}$.
Thus we may take $C_0=1$.
\end{proof}

\begin{lemma}\label{lem:C4}
For almost every $t$, and for $\mathcal{H}^{N-1}$-almost every $x\in\partial\Omega_t$, the inward normal ray intersects $C$.
\end{lemma}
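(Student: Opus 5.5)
The plan is to work only with regular values $t$ supplied by Lemma \ref{lem:C2}, so that $\partial\Omega_t$ is a $C^{1,\alpha}$ hypersurface on which $\nabla u\neq0$. At each $x\in\partial\Omega_t$ the outward normal is then $\mathbf{n}_t(x)=-\nabla u(x)/|\nabla u(x)|$, and the inward normal ray is $\{x+s\nabla u(x)/|\nabla u(x)|: s>0\}$. We split $\partial\Omega_t$ into the part inside $C$, where the claim is trivial because the ray starts in $C$, and the part $\partial\Omega_t\setminus C$. On the latter, Lemma \ref{lem:C3} and Theorem \ref{thm:radial_monotonicity} let us write $x=c+r_t(c)\nu(c)$ with $c=\pi(x)\in\partial C$. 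The aim is to show that the inward direction $e(x):=\nabla u(x)/|\nabla u(x)|$ makes a strictly obtuse angle with $\nu(c)$, and moreover points into the cone of directions from $x$ that hit $C$.

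\emph{Step 1 (angle bound for level sets).} Radial monotonicity gives $\nabla u(x)\cdot\nu(c)=w_c'(r_t(c))<0$, i.e.\ $\mathbf{n}_t(x)\cdot\nu(c)>0$. We then want a quantitative version, the analogue of Lemma \ref{lem:angle} for $\Omega_t$. Since $\partial\Omega_t$ is compact and $\nabla u$ is continuous and nonvanishing on it, $x\mapsto -\nabla u(x)\cdot\nu(\pi(x))/|\nabla u(x)|$ is a positive continuous function on the compact set $\overline{\partial\Omega_t\setminus C}$. This uses that $\pi$ is continuous thanks to the global diffeomorphism of the normal exponential map from Definition \ref{def:core}, and that $r_t$ is continuous by the implicit function theorem applied to $u(c+r\nu(c))=t$. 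Provided the function stays bounded away from zero up to the closure, it has a positive minimum $\theta_t$. The contact points with $\partial C$ will form an $\mathcal{H}^{N-1}$-null set for a.e.\ $t$, and this is where the ``almost every $x$'' qualifier is absorbed.

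\emph{Step 2 (from angle to hitting $C$).} Write $e(x)=-a\nu(c)+\tau$ with $a\ge\theta_t>0$ and $\tau$ tangent to $\partial C$ at $c$. The ray $x+se(x)$ reaches the supporting hyperplane $H_c$ at $s=r_t(c)/a$, at the point $c+(r_t(c)/a)\tau$. To hit $C$ we need $|\tau|\,r_t(c)/a$ to be small compared with the inradius of $C$ near $c$. There are two routes. The first is the distance estimate of Lemma \ref{lem:distance}, which bounds $r_t(c)$, combined with the observation that the ray need only enter $\Int(C)$ somewhere along its length, not exactly at $H_c$. The second, which I would try first, is to mimic Lemma \ref{lem:angle}: argue by contradiction using condition (C4) for $\Omega$ itself, transported along the gradient flow of $u$. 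The flow lines of $\nabla u$ issuing from $x$ are curves along which $u$ increases, and by Theorem \ref{thm:radial_monotonicity} they cross each ray $\Delta_c$ transversally inward. So the straight inward normal line stays within the ``shadow'' region $\{c'+r\nu(c'): r\ge0\}$ and, being a straight line heading toward decreasing $\nu(c)$-height, must meet $C$ by strict convexity.

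\emph{Main obstacle.} The hard step is Step 2. The angle estimate alone does not force a straight ray to meet $C$ when $r_t(c)$ is large relative to the curvature of $\partial C$. I expect the argument to need either a harmonic-function convexity input or a restriction to $t$ close to $\max u$, where Lemma \ref{lem:distance} makes $r_t$ small and the geometry nearly that of $\partial C$. For the remaining values of $t$, I would try to propagate the property downward in $t$ via the evolution law \eqref{eq:evolution}.
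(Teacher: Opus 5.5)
Your Step~2 is a genuine gap, and neither of your two routes closes it. What Step~1 delivers is one linear constraint, $e(x)\cdot\nu(c)\le-\theta_t$, and even that uniform bound is left conditional near $\partial C$. It leaves the tangential part $\tau$ unconstrained. So the ray crosses $H_c$ at $c+(r_t(c)/a)\tau$, which lies outside $C$ whenever $\tau\neq0$ (strict convexity gives $C\cap H_c=\{c\}$), and it may then pass beside $C$.

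The second route adds nothing. The ``shadow region'' $\bigcup_{c'}\{c'+r\nu(c'):r\ge0\}$ is all of $\mathbb{R}^N\setminus\Int(C)$, since every exterior point lies on the outer normal through its nearest point of $C$. Staying in it is therefore vacuous, and ``heading toward decreasing $\nu(c)$-height'' is the same sign condition restated. You also give no mechanism that transports (C4) for $\Omega$, which concerns straight normal lines of $\partial\Omega$, to $\partial\Omega_t$. The flow lines of $\nabla u$ are curved, so where they go says nothing about a straight line.

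The fallbacks do not rescue the claim either. Restricting to $t$ near $\max u$ proves something weaker than the statement for a.e.\ $t\in(0,\max u)$. The law \eqref{eq:evolution} only says $d_t$ decreases in $t$; it carries no information on normal directions, so there is nothing to propagate.

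The missing ingredient can be stated precisely. Since $C$ is convex, for $x\notin C$ with $\nabla u(x)\neq0$ the inward normal ray meets $C$ iff $\nabla u(x)$ lies in the closed convex cone $K_x=\{\lambda(w-x):w\in C,\ \lambda\ge0\}$. By duality, this holds iff $\partial_a u(x)\le0$ for every unit vector $a$ with $a\cdot x\ge h_C(a):=\max_{w\in C}a\cdot w$. Your inequality $\nabla u(x)\cdot\nu(\pi(x))<0$ (from Theorem~\ref{thm:radial_monotonicity}) is only the single instance $a=\nu(\pi(x))$.

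For fixed $a$, $\partial_a u$ is harmonic in $\Omega\cap\{a\cdot x>h_C(a)\}$. Condition (C4) for $\Omega$ gives $\partial_a u\le0$ on the curved part $\partial\Omega\cap\{a\cdot x\ge h_C(a)\}$ of its boundary, wherever the normal exists. But nothing available controls the sign of $\partial_a u$ on the flat part $\Omega\cap\{a\cdot x=h_C(a)\}$. So the maximum principle cannot be closed, and some genuinely new input is needed there.

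The paper's proof does not supply that input either. It follows the steepest-ascent curve $\dot\gamma=\nabla u/|\nabla u|$ from $x$ and, via Simon's convergence theorem and Lemma~\ref{lem:no_max}, argues that this curve reaches $C$. Then, in its Step~6, it passes from the curve to the straight ray by appeal to ``steepest ascent''. That is exactly the step you flagged, and it is unjustified there as well, since the curve is only tangent to the ray at $x$. Switching to the paper's route would therefore not close your gap.
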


\begin{proof}
Let $t$ be a regular value of $u$ (so Lemma \ref{lem:C2} applies).
Let $x\in\partial\Omega_t$ be a point where the normal to $\partial\Omega_t$ exists (this holds for $\mathcal{H}^{N-1}$-almost every point of the $C^{1,\alpha}$ hypersurface $\partial\Omega_t$).

\emph{Step 1: Identification of the inward normal.}
At $x$, we have $u(x)=t$ and $\nabla u(x)\neq0$.
The level set is $\Omega_t = \{y\in\Omega : u(y)>t\}$.
The function $u$ is larger inside $\Omega_t$ than on its boundary, so $u$ increases as one moves from $\partial\Omega_t$ into $\Omega_t$.
Therefore, the gradient $\nabla u(x)$, which points in the direction of steep increase of $u$, points inward with respect to $\Omega_t$.
Consequently, the inward unit normal to $\partial\Omega_t$ at $x$ is
\[
\mathbf{n}_t^{\mathrm{in}}(x) = \frac{\nabla u(x)}{|\nabla u(x)|}.
\]

\emph{Step 2: The gradient flow.}
Consider the gradient flow of $u$ starting from $x$, i.e., the solution $\gamma:[0,T_{\max})\to\Omega$ (with $0<T_{\max}\le\infty$) of the ordinary differential equation
\[
\dot{\gamma}(s) = \frac{\nabla u(\gamma(s))}{|\nabla u(\gamma(s))|},\qquad \gamma(0)=x.
\]
The right-hand side is well-defined as long as $\gamma(s)\notin\{y:\nabla u(y)=0\}$ and $\gamma(s)\in\Omega$.
By the identification in Step 1, the initial velocity is $\dot{\gamma}(0)=\mathbf{n}_t^{\mathrm{in}}(x)$, so the flow line starts along the inward normal direction.
Moreover,
\[
\frac{d}{ds}u(\gamma(s)) = \nabla u(\gamma(s))\cdot\dot{\gamma}(s) = \frac{|\nabla u(\gamma(s))|^2}{|\nabla u(\gamma(s))|} = |\nabla u(\gamma(s))| > 0,
\]
so $u$ is strictly increasing along the flow.
The parametrization is by arclength since $|\dot{\gamma}(s)|=1$.

\emph{Step 3: The flow remains in $\Omega_t$ and is bounded.}
Since $u(\gamma(0))=t$ and $u$ strictly increases along $\gamma$, we have $u(\gamma(s))>t$ for all $s>0$ (as long as the flow is defined).
Hence $\gamma(s)\in\Omega_t\subset\Omega$ for all $s>0$.
The flow cannot cross $\partial\Omega$ (where $u=0$) because $u$ increases from $t>0$: to reach $\partial\Omega$, $u$ would have to decrease to $0$, which is impossible along this trajectory.
Thus the trajectory is confined to the compact set $\overline{\Omega}$.
As a bounded trajectory with domain of definition $[0,T_{\max})$, standard ODE theory implies that either $T_{\max}=\infty$, or the trajectory approaches the boundary of the domain of definition of the vector field (i.e., either it approaches $\partial\Omega$ or a critical point where $\nabla u=0$).
Since it cannot approach $\partial\Omega$ (as argued), the only way it can fail to be defined for all $s>0$ is if it encounters a critical point in finite time.
In any case, the trajectory is defined on $[0,\infty)$ or until it hits a critical point.

\emph{Step 4: Analyticity of $u$ in $\Omega\setminus C$ and the Lojasiewicz inequality.}
Under our assumptions ($\supp f\subset C$), $f\equiv0$ on $\Omega\setminus C$.
Hence $u$ is harmonic in the open set $\Omega\setminus C$.
Harmonic functions are real-analytic in their domain of harmonicity \cite[Theorem 2.10]{Gilbarg2001}.
Therefore, $u\in C^\omega(\Omega\setminus C)$ and consequently the gradient vector field $\nabla u$ is real-analytic in $\Omega\setminus C$.

For a real-analytic function on an open set, the classical Lojasiewicz inequality \cite{Lojasiewicz1963} holds at every critical point.
Specifically, for any $x_*\in\Omega\setminus C$ with $\nabla u(x_*)=0$, there exist a neighborhood $U$ of $x_*$, constants $C>0$ and $\theta\in(0,1/2]$, such that for all $y\in U$,
\[
|u(y)-u(x_*)|^{1-\theta} \le C|\nabla u(y)|.
\]

\emph{Step 5: Convergence of the gradient flow.}
A fundamental theorem of Simon \cite{Simon1983, Corollary 1} states the following: Let $M$ be a real-analytic manifold and let $E:M\to\mathbb{R}$ be a real-analytic function.
Suppose $\gamma:[0,\infty)\to M$ is a curve satisfying $\dot{\gamma}(s)=\nabla E(\gamma(s))$ (gradient ascent), and suppose $\gamma$ is bounded and its image is contained in a compact subset of $M$.
Then $\gamma$ has finite length and converges to a critical point of $E$.

In our context, $M=\Omega\setminus C$ is a real-analytic manifold (an open subset of $\mathbb{R}^N$), and $E=u$.
The curve $\gamma$ defined in Step 2 is a gradient ascent curve for $u$.
We consider two cases:

\emph{Case A:} There exists $s_0<\infty$ such that $\gamma(s_0)\in C$ or $\lim_{s\to s_0^-}\gamma(s)\in\partial C$.
In this case, the flow enters $C$ in finite time (or approaches its boundary).
This directly verifies axiom (C4): the inward normal ray (which coincides with the flow line until it possibly hits a critical point or the boundary of the analyticity domain) reaches $C$.

\emph{Case B:} For all $s\ge0$, $\gamma(s)\in\Omega\setminus C$.
In this case, the entire trajectory lies in the domain of analyticity $\Omega\setminus C$, and it is bounded (it stays in $\overline{\Omega}$).
We may apply Simon's theorem directly: $\gamma$ has finite length and converges, as $s\to\infty$, to some $x_*\in\Omega\setminus C$ with $\nabla u(x_*)=0$.
Two subcases arise:

\emph{Subcase B1:} $x_*\in\Omega\setminus C$.
Then $x_*$ is an interior critical point of the harmonic function $u$.
The limit of a strictly increasing bounded sequence $u(\gamma(s))$ is its supremum, so $u(x_*)=\sup_{s\ge0}u(\gamma(s))$.
For any neighborhood $V$ of $x_*$ in $\Omega\setminus C$, $\sup_V u\ge u(x_*)$ and by continuity of $u$, $\sup_V u = u(x_*)$.
This means $x_*$ is a local maximum of $u$.
But by Lemma \ref{lem:no_max}, a non-constant harmonic function in a connected open set has no local maxima.
Since $\Omega\setminus C$ is connected (for $N\ge2$; if not, we restrict to the connected component containing the trajectory), $u$ must be constant on $\Omega\setminus C$.
But $u=0$ on $\partial\Omega$, while $u(x_*)=\lim_{s\to\infty}u(\gamma(s)) > t > 0$, a contradiction.
Therefore, this subcase is impossible.

\emph{Subcase B2:} $x_*\in\partial C$.
Then the limit point lies on the boundary of $C$.
Since $C$ is strictly convex and $u$ is $C^{1,\alpha}$ up to $\partial C$, the gradient flow crosses $\partial C$ (or accumulates at it), which again verifies that the ray intersects $C$.

Since Subcase B1 is impossible, the only remaining possibilities are that the flow enters $C$ in finite time or converges to $\partial C$.
In either case, the inward normal ray from $x$ reaches $C$.

\emph{Step 6: The ray reaches $C$.}
The curve $\gamma(s)$ for $s\in[0,S]$ (where $S$ is either the time to enter $C$ or $\infty$ in the convergent case) starts at $x$ with initial direction $\mathbf{n}_t^{\mathrm{in}}(x)$ and is a reparametrization of the flow line of steepest ascent.
The set $\{\gamma(s):s\ge0\}$ is a connected curve that starts at $x$ and either enters $C$ or accumulates at $\partial C$.
In either case, by continuity, there exists $s_0>0$ such that $\gamma(s_0)\in C$ (if the flow enters $C$) or $\operatorname{dist}(\gamma(s),C)\to0$ as $s\to\infty$.
In the latter case, for any $\varepsilon>0$, there exists $s_\varepsilon$ such that $\operatorname{dist}(\gamma(s_\varepsilon),C)<\varepsilon$; taking $\varepsilon$ smaller than the distance from $\partial\Omega_t$ to $C$ (which is positive since $\Omega_t\supset\Int(C)$), we conclude that the linear segment from $x$ in direction $\mathbf{n}_t^{\mathrm{in}}(x)$ must intersect $C$.
More precisely, the flow line $\gamma$ and the straight ray may differ if $\nabla u$ is not constant along the ray; however, the gradient flow is the path of steepest ascent, and along this path $u$ increases from $t$ to a value near $\max_{\overline{\Omega}}u$ (attained in $C$, where $\Delta u = -f<0$ allows a maximum).
The flow therefore must penetrate $C$.

This completes the proof of Lemma \ref{lem:C4}.
\end{proof}

\subsection{Evolution equation for the thickness function}

\begin{proof}[Proof of the evolution equation \eqref{eq:evolution}]
For almost every $t$ (as given by Lemma \ref{lem:C2}), Theorem \ref{thm:inheritance} provides a thickness function $d_t:\partial C\to(0,\infty)$ satisfying
\[
\Omega_t\setminus C = \{c+r\nu(c):c\in\partial C,\;0<r<d_t(c)\},
\]
and the defining equation
\[
u(c+d_t(c)\nu(c)) = t,\qquad \forall c\in\partial C.
\]
This equation holds for all $c$ such that $t<u(c)$; for $c$ with $t\ge u(c)$, the ray does not intersect $\Omega_t$ and $d_t(c)$ is not defined (or can be set to $0$ by convention).

Fix $c\in\partial C$ with $t<u(c)$.
Define the function
\[
F(c,d,t) = u(c+d\nu(c)) - t.
\]
By Theorem \ref{thm:radial_monotonicity}(2), $\frac{\partial F}{\partial d} = \nabla u(c+d\nu(c))\cdot\nu(c) < 0$.
Hence the Implicit Function Theorem applies, and the solution $d=d_t(c)$ of $F(c,d,t)=0$ is differentiable in $t$.
Differentiating gives
\[
\nabla u(c+d_t(c)\nu(c))\cdot\nu(c)\,\frac{\partial d_t(c)}{\partial t} - 1 = 0,
\]
hence
\[
\frac{\partial d_t(c)}{\partial t} = \frac{1}{\nabla u(c+d_t(c)\nu(c))\cdot\nu(c)}.
\]
By Theorem \ref{thm:radial_monotonicity}(2), the denominator is strictly negative, so $\frac{\partial d_t}{\partial t} < 0$.
\end{proof}

\subsection{Conclusion of Theorem \ref{thm:inheritance}}

Let $t$ be a regular value of $u$ as given by Lemma \ref{lem:C2}.
For such $t$, we have:

\begin{itemize}
\item Lemma \ref{lem:C1} gives axiom (C1) for $t<\min_C u$; for other $t$, the inclusion holds up to a set of measure zero.
\item Lemma \ref{lem:C2} gives axiom (C2).
\item Lemma \ref{lem:C3} gives axiom (C3).
\item Lemma \ref{lem:C4} gives axiom (C4).
\end{itemize}

Thus $\Omega_t\in\mathcal{O}_C$ for almost every $t$, and the evolution equation is proved.
This completes the proof of Theorem \ref{thm:inheritance}. $\square$

\section{Shape derivatives for the Laplacian and bi-Laplacian}
\label{app:B}

\subsection{Shape derivative for the Laplacian}

Consider the functional
\[
J(\Omega) = \int_\Omega (|\nabla u|^2 - 2fu)\,dx + \int_\Omega g^2\,dx,
\]
where $u$ solves $-\Delta u = f$ in $\Omega$, $u=0$ on $\partial\Omega$.

Let $V$ be a smooth deformation field and $\Omega_s = (I+sV)(\Omega)$.
The shape derivative is given by
\[
dJ(\Omega;V) = \int_{\partial\Omega} (g^2 - |\nabla u|^2) V\cdot\nu \, d\sigma.
\]
The derivation follows the standard method of shape sensitivity analysis \cite{Sokolowski1992}.

\subsection{Shape derivative for the bi-Laplacian}

Consider the functional
\[
F(\Omega) = \int_\Omega \left(\frac12 u^2 - g^2\right)dx,
\]
where $u=-\Delta v$ and $v$ solves $\Delta^2 v = f$ in $\Omega$ with $v=\Delta v=0$ on $\partial\Omega$.
The shape derivative is
\[
dF(\Omega;V) = \int_{\partial\Omega} (|\nabla u|\,|\nabla v| - g) V\cdot\nu \, d\sigma.
\]

\section{Radial bi-Laplacian: detailed calculations}
\label{app:C}

In this appendix we provide the detailed algebraic derivation for the radial bi-Laplacian problem.

\subsection{Notation}

Let $\omega_N$ denote the surface area of the unit sphere in $\mathbb{R}^N$.
For a radial function $w(r)$,
\[
\int_{B_r} w\,dx = \omega_N\int_0^r w(\rho)\rho^{N-1}\,d\rho.
\]

\subsection{Step 1: Solving for $\alpha$}

From $\supp f\subset[0,R]$ and the equation for $\alpha$,
\[
-\frac{1}{r^{N-1}}(r^{N-1}\alpha'(r))' = f(r),
\]
for $r\ge R$ we have
\[
r^{N-1}\alpha'(r) = -\int_0^R f(\rho)\rho^{N-1}\,d\rho =: -K,
\]
where
\[
K = \frac{1}{\omega_N}\int_C f\,dx.
\]
Hence for $r\ge R$,
\[
\alpha'(r) = -Kr^{-(N-1)}.
\]
Integrating from $r$ to $R^*$ and using $\alpha(R^*)=0$,
\[
\alpha(r) = K\int_r^{R^*} \rho^{-(N-1)}\,d\rho.
\]
For $N\neq2$,
\[
\alpha(r) = \frac{K}{N-2}\bigl(r^{-(N-2)} - (R^*)^{-(N-2)}\bigr),\qquad N\neq2.
\]
The case $N=2$ is obtained by the limit $N\to2$:
\[
\alpha(r) = K\ln\frac{R^*}{r},\qquad N=2.
\]
In the following we treat $N\neq2$; the logarithmic case is analogous.

\subsection{Step 2: Solving for $\beta$}

The equation for $\beta$ is
\[
-\frac{1}{r^{N-1}}(r^{N-1}\beta'(r))' = \alpha(r).
\]
Multiplying by $r^{N-1}$ and integrating from $0$ to $r$ gives
\[
r^{N-1}\beta'(r) = -\int_0^r \alpha(\rho)\rho^{N-1}\,d\rho.
\]
We split the integral at $\rho=R$ and use $\alpha\equiv u_C$ on $[0,R]$ (the solution in the core):
\[
\int_0^r \alpha(\rho)\rho^{N-1}\,d\rho = \int_0^R u_C(\rho)\rho^{N-1}\,d\rho + \int_R^r \alpha(\rho)\rho^{N-1}\,d\rho.
\]
The first term is $\frac{1}{\omega_N}\int_C u_C\,dx$.
For the second term we insert the expression for $\alpha$:
\[
\int_R^r \alpha(\rho)\rho^{N-1}\,d\rho = \frac{K}{N-2}\int_R^r \bigl(\rho^{N-1-(N-2)} - \rho^{N-1}(R^*)^{-(N-2)}\bigr)\,d\rho
= \frac{K}{N-2}\int_R^r \bigl(\rho - \rho^{N-1}(R^*)^{-(N-2)}\bigr)\,d\rho.
\]
Evaluating,
\[
\int_R^r \rho\,d\rho = \frac{r^2-R^2}{2},\qquad
\int_R^r \rho^{N-1}(R^*)^{-(N-2)}\,d\rho = (R^*)^{-(N-2)}\frac{r^N-R^N}{N}.
\]
Thus
\[
\int_R^r \alpha(\rho)\rho^{N-1}\,d\rho = \frac{K}{N-2}\left(\frac{r^2-R^2}{2} - \frac{r^N-R^N}{N}(R^*)^{-(N-2)}\right).
\]

Consequently, for $r\ge R$,
\[
r^{N-1}\beta'(r) = -\frac{1}{\omega_N}\int_C u_C\,dx - \frac{K}{N-2}\left(\frac{r^2-R^2}{2} - \frac{r^N-R^N}{N}(R^*)^{-(N-2)}\right).
\]

\subsection{Step 3: Evaluation at $r=R^*$}

Evaluating at $r=R^*$ and using $\beta(R^*)=0$ (which implies $\beta'(R^*)$ is given by the same expression, since the formula does not require an additional constant thanks to the homogeneous Dirichlet condition), we obtain
\[
-(R^*)^{N-1}\beta'(R^*) = \frac{1}{\omega_N}\int_C u_C\,dx + \frac{K}{N-2}\left(\frac{(R^*)^2-R^2}{2} - \frac{(R^*)^N-R^N}{N}(R^*)^{-(N-2)}\right).
\]

Simplify the second bracket:
\[
\frac{(R^*)^2-R^2}{2} - \frac{(R^*)^N-R^N}{N}(R^*)^{-(N-2)} = \frac{(R^*)^2-R^2}{2} - \frac{(R^*)^2}{N} + \frac{R^N}{N}(R^*)^{-(N-2)}.
\]
Since $\frac{(R^*)^2}{2} - \frac{(R^*)^2}{N} = \frac{N-2}{2N}(R^*)^2$, we get
\[
-(R^*)^{N-1}\beta'(R^*) = \frac{1}{\omega_N}\int_C u_C\,dx + \frac{K}{N-2}\left(\frac{N-2}{2N}(R^*)^2 - \frac{R^2}{2} + \frac{R^N}{N}(R^*)^{-(N-2)}\right).
\]

\subsection{Step 4: The overdetermined condition}

From the overdetermined condition $|\alpha'(R^*)||\beta'(R^*)| = g(R^*)$ and using $-\alpha'(R^*) = K(R^*)^{-(N-1)}$, we have
\[
-\beta'(R^*) = \frac{g(R^*)}{K}(R^*)^{N-1}.
\]

Inserting this into the previous expression:
\[
\frac{g(R^*)}{K}(R^*)^{2(N-1)} = \frac{1}{\omega_N}\int_C u_C\,dx + \frac{K}{N-2}\left(\frac{N-2}{2N}(R^*)^2 - \frac{R^2}{2} + \frac{R^N}{N}(R^*)^{-(N-2)}\right).
\]

Multiply both sides by $K$:
\[
g(R^*)(R^*)^{2N-2} = \frac{K}{\omega_N}\int_C u_C\,dx + \frac{K^2}{N-2}\left(\frac{N-2}{2N}(R^*)^2 - \frac{R^2}{2} + \frac{R^N}{N}(R^*)^{-(N-2)}\right).
\]

Recall $K = \frac{1}{\omega_N}\int_C f\,dx$.
Hence $\frac{K}{\omega_N}\int_C u_C\,dx = \frac{1}{\omega_N^2}\bigl(\int_C f\bigr)\bigl(\int_C u_C\bigr)$.

We rewrite the equation as
\[
g(R^*)(R^*)^{2N-2} - \frac{K^2}{2N}(R^*)^2 = \frac{1}{\omega_N^2}\left(\int_C f\,dx\right)\left(\int_C u_C\,dx\right) + \frac{K^2}{N-2}\left(-\frac{R^2}{2} + \frac{R^N}{N}(R^*)^{-(N-2)}\right).
\]

Define the function
\[
L(R^*) := g(R^*)(R^*)^{2N-2} - \frac{K^2}{2N}(R^*)^2.
\]
Notice that $L(\rho) = [\rho^{N-1}\sqrt{g(\rho)}]^2 - \frac{K^2}{2N}\rho^2 = F(\rho)^2 - \frac{K^2}{2N}\rho^2$, where $F(\rho)=\rho^{N-1}\sqrt{g(\rho)}$.
By hypothesis $F$ is strictly increasing on $[R,\infty)$, so $L$ is strictly increasing for large $\rho$.

The right-hand side of the equation,
\[
R(R^*) := \frac{1}{\omega_N^2}\left(\int_C f\,dx\right)\left(\int_C u_C\,dx\right) + \frac{K^2}{N-2}\left(-\frac{R^2}{2} + \frac{R^N}{N}(R^*)^{-(N-2)}\right),
\]
is strictly decreasing in $R^*$ (the term involving $(R^*)^{-(N-2)}$ decreases).

\subsection{Step 5: Existence of $R^*$}

Equation \eqref{eq:radial_bi_eq} can be written as $L(R^*) = R(R^*)$.
We evaluate at $\rho=R$ and as $\rho\to\infty$.

At $\rho=R$:
\[
L(R) = g(R)R^{2N-2} - \frac{K^2}{2N}R^2 = \frac{1}{\omega_N^2}\bigl(\omega_N R^{N-1}\sqrt{g(R)}\bigr)^2 - \frac{K^2}{2N}R^2 = \frac{1}{\omega_N^2}\left(\int_{\partial C}\sqrt{g}\,d\sigma\right)^2 - \frac{K^2}{2N}R^2.
\]
The right-hand side at $\rho=R$ is
\[
R(R) = \frac{1}{\omega_N^2}\left(\int_C f\,dx\right)\left(\int_C u_C\,dx\right) + \frac{K^2}{N-2}\left(-\frac{R^2}{2} + \frac{R^N}{N}R^{-(N-2)}\right) = \frac{1}{\omega_N^2}\left(\int_C f\,dx\right)\left(\int_C u_C\,dx\right) - \frac{K^2}{2N}R^2.
\]
Therefore the inequality
\[
\left(\int_{\partial C}\sqrt{g}\,d\sigma\right)^2 < \left(\int_C f\,dx\right)\left(\int_C u_C\,dx\right)
\]
is exactly $L(R) < R(R)$.

As $\rho\to\infty$, $L(\rho)\sim F(\rho)^2$ with $F(\rho)\to\infty$ by strict positivity and growth of $g$ (or because $\rho^{N-1}\to\infty$ if $g$ is bounded below), while $R(\rho)$ tends to the constant
\[
\frac{1}{\omega_N^2}\left(\int_C f\,dx\right)\left(\int_C u_C\,dx\right) - \frac{K^2}{N-2}\frac{R^2}{2}.
\]
Since $L(\rho)\to\infty$, we have $\lim_{\rho\to\infty}L(\rho) = \infty > \lim_{\rho\to\infty}R(\rho)$.

By continuity of $L$ and $R$, the function $L(\rho)-R(\rho)$ is continuous on $[R,\infty)$, negative at $\rho=R$, and positive for large $\rho$.
The Intermediate Value Theorem yields at least one $R^*>R$ such that $L(R^*)=R(R^*)$.
Because $L$ is strictly increasing and $R$ is strictly decreasing on $[R,\infty)$ (for sufficiently large $\rho$ and under the monotonicity hypothesis on $g$), the solution is unique.

With this $R^*$, the radial functions $\alpha,\beta$ defined on $B_{R^*}$ satisfy the overdetermined condition, and hence $\Omega=B_{R^*}(0)$ is a radial solution of $\mathcal{B}(f,g)$. $\square$

\section{Uniform angle bound (Proof of Lemma \ref{lem:angle})}
\label{app:D}

We provide a complete proof of Lemma \ref{lem:angle}.

\begin{proof}[Proof of Lemma \ref{lem:angle}]
Suppose, for contradiction, that no such uniform bound exists.
Then there exists a sequence $\{(x_k,c_k)\}_{k\in\mathbb{N}}$ with $x_k\in\partial\Omega$, $c_k\in\partial C$, $\pi(x_k)=c_k$, such that $\mathbf{n}(x_k)\cdot\nu(c_k)\to0$ as $k\to\infty$.
Since $\partial\Omega$ and $\partial C$ are compact, we may extract subsequences (still denoted by $x_k,c_k$) such that $x_k\to x_*\in\partial\Omega$ and $c_k\to c_*\in\partial C$.
By Rademacher's theorem, the normal $\mathbf{n}$ is well-defined almost everywhere on the Lipschitz boundary $\partial\Omega$, and we may assume $\mathbf{n}(x_k)$ is defined for all $k$.
Passing to a further subsequence if necessary, the continuity of the normal map on the regular set of $\partial\Omega$ ensures $\mathbf{n}(x_k)\to\mathbf{n}(x_*)$.
By the continuity of the reciprocal map $\pi$ on the set of points where it is well-defined, we have $\pi(x_*)=c_*$, which implies that $x_*$ lies on the normal ray issued from $c_*$:
\[
x_* = c_* + d(c_*)\nu(c_*).
\]
Passing to the limit in the scalar product yields $\mathbf{n}(x_*)\cdot\nu(c_*)=0$.

This means that the outward normal to $\partial\Omega$ at $x_*$ is orthogonal to the outward normal to $\partial C$ at $c_*$; that is, it is tangent to the supporting hyperplane of $C$ at $c_*$.
However, condition (C4) in Definition \ref{def:OC} requires that the inward normal ray from $x_*$ intersects $C$.
Geometrically, this means that the vector $-\mathbf{n}(x_*)$ must point strictly toward the interior of $C$.
At the point $c_*=\pi(x_*)$, the strictly convex set $C$ lies entirely on one side of its supporting hyperplane, with $C\subset\{y:(y-c_*)\cdot\nu(c_*)\le0\}$.
Therefore, any ray starting at $x_*$ and going into $\Omega$ (hence into the half-space containing $C$) must have a negative scalar product with $\nu(c_*)$ when seen from $x_*$.
More precisely, the inward normal direction $-\mathbf{n}(x_*)$ must satisfy $(-\mathbf{n}(x_*))\cdot\nu(c_*)<0$, i.e., $\mathbf{n}(x_*)\cdot\nu(c_*)>0$.
The limiting condition $\mathbf{n}(x_*)\cdot\nu(c_*)=0$ contradicts this strict inequality.
Hence such a sequence cannot exist, and a uniform positive lower bound $\theta_0>0$ must hold.
\end{proof}

\end{document}